\documentclass[reqno]{amsart}
\usepackage{}
\usepackage{stmaryrd}
\usepackage{mathrsfs}
\usepackage{cases}
\usepackage{amsfonts}
\usepackage{amssymb}
\usepackage{amsmath}
\usepackage{tikz}
\newtheorem{theorem}{Theorem}[section]
\newtheorem{lemma}[theorem]{Lemma}

\newtheorem{corollary}[theorem]{Corollary}
\theoremstyle{definition}
\newtheorem{definition}[theorem]{Definition}

\newtheorem{remark}[theorem]{Remark}
\numberwithin{equation}{section}

\newcommand{\blankbox}[2]


\let\al=\alpha
\let\b=\beta
\let\g=\gamma
\let\d=\delta

\let\la=\lambda
\let\r=\rho
\let\s=\sigma

\let\G= \Gamma

\let\th=\theta

\let\ep=\epsilon

\let\k=\kappa
\let\fy=\infty
\def\bbR{\mathbb{R}}
\def\bbS{\mathbb{S}}

\def\bbZ{\mathbb{Z}}

\def\calC{\mathcal {C}}

\def\scrF{\mathscr{F}}

\newcommand{\be}{\begin{equation*}}
\newcommand{\ee}{\end{equation*}}
\newcommand{\ben}{\begin{equation}}
\newcommand{\een}{\end{equation}}
\newcommand{\bn}{\begin{enumerate}}
\newcommand{\en}{\end{enumerate}}
\newcommand{\bs}{\backslash}


\def\rr{{\mathbb R}}

\def\rn{{{\rr}^n}}

\def\mpq{M^{p,q}}
\def\mpqs{M^{p,q}_s}
\def\wpq{W^{p,q}}
\def\wpqs{W^{p,q}_s}
\def\-fl1{\scrF^{-1}L^1}
\def\fl1{\scrF L^1}
\def\lan{\langle}
\def\ran{\rangle}
\def\hes{\text{Hess}}

\def\wyynp{W^{\fy,\fy}_{n/\dot{p}+\ep}}
\def\zn{\mathbb{Z}^n}
\def\scrL{\mathscr{L}}


\begin{document}
\title[unimodular Fourier multipliers on Wiener amalgam spaces]
{sharp estimates of unimodular Fourier multipliers on Wiener amalgam spaces}
\author{WEICHAO GUO}
\address{School of Science, Jimei University, Xiamen, 361021, P.R.China}
\email{weichaoguomath@gmail.com}
\author{GUOPING ZHAO}
\address{School of Applied Mathematics, Xiamen University of Technology,
Xiamen, 361024, P.R.China} \email{guopingzhaomath@gmail.com}
\subjclass[2000]{42B15, 42B35.}
\keywords{unimodular Fourier multipliers, wiener amalgam spaces, sharp potential loss. }

\begin{abstract}
We study the boundedness on the Wiener amalgam spaces $\wpqs$ of
Fourier multipliers with symbols of the type $e^{i\mu(\xi)}$,
for some real-valued functions $\mu(\xi)$ whose prototype is $|\xi|^{\b}$ with $\b\in (0,2]$.
Under some suitable assumptions on $\mu$, we give the characterization of
$\wpqs\rightarrow \wpq$ boundedness of $e^{i\mu(D)}$, for arbitrary pairs of
$0< p,q\leq \infty$.
Our results are an essential improvement of the previous known results, for both sides of sufficiency and necessity,
even for the special case $\mu(\xi)=|\xi|^{\b}$ with $1<\b<2$.
\end{abstract}

\maketitle


\section{INTRODUCTION}
The main aim of this paper is to study certain unimodular Fourier
multipliers on the Wiener amalgam spaces.
For two function spaces $X$ and $Y$, we call a tempered distribution $m$ a Fourier
multiplier from $X$ to $Y$, if there exists a constant $C>0$ such that
\begin{equation*}
\Vert T_{m}(f)\Vert _{Y}\leq C\Vert f\Vert _{X},
\end{equation*}
for all $f\ $\ in the Schwartz space $\mathscr {S}(\mathbb{R}^{n})$, where
\begin{equation*}
T_{m}f=m(D)f=\mathscr {F}^{-1}(m\mathscr{F}f)
\end{equation*}
is the Fourier multiplier operator associated with $m$, $\scrF$ and $\scrF^{-1}$ denote the Fourier and inverse Fourier transform respectively,
and $m$ is called
the symbol or multiplier of $T_{m}$.

In particular, unimodular multipliers arise naturally
when one
solves the Cauchy problem for some dispersive equations.
For example, consider the Cauchy problem of dispersive equation
\begin{equation}
\begin{cases}
~i\partial_tu+(-\Delta )^{\frac{\beta }{2}}u=0 \\
~u(0,x)=u_{0}
\end{cases}
,
\end{equation}
$(t,x)\in \mathbb{R}\times \mathbb{R}^{n}$, the formal solution is given by $
u(t,x)=e^{it|D|^{\beta }}u_{0}(x)$, where $e^{it|D|^{\beta }}$ is just the
unimodular Fourier multiplier defined by $e^{it|D|^{\beta }}f=\scrF^{-1}(e^{it|\xi |^{\beta
}}\scrF f)$. Note that $e^{it|D|^{\beta }}u_{0}$ is the free solution of the Schr\"{o}dinger equation when $\b=2$,
and the free solution of the wave equation when $\b=1$.
Hence, in order to study the dispersive equation, it is of great interest to
study the boundedness of unimodular Fourier multipliers on several function spaces.

Taking $e^{i|D|^{\b}}$ as the prototype, the unimodular Fourier multiplier
in $\rn$ is defined by
\be
e^{i\mu(D)}f(x):= \int_{\rn}e^{2\pi ix\xi}e^{i\mu(\xi)}\hat{f}(\xi)d\xi,
\ee
where $\mu$ is a real-valued function (with some suitable assumptions).

Since $e^{i|\xi|^{\b}}\in L^{\fy}(\rn)$, the unimodular multiplier $e^{i|D|^{\b}}$
preserves the $L^2(\rn)$-norm. But in generally, $e^{i|D|^{\b}}$ is not bounded on $L^p(\rn)$ if $p\neq 2$.
Surprising, it was proved by B$\acute{e}$nyi-Gr\"{o}chenig-Okoudjou-Rogers \cite{Benyi2007JFA}
that $e^{i|D|^{\b}} (0<\b\leq 2)$ is
bounded on the modulation spaces $M_{p,q}^{s}$ for all $1\leq p,q\leq \infty $, $s\in \mathbb{R}$.
Furthermore, in the case $\beta >2$, Miyachi-Nicola-Rivetti-Tabacco-Tomita \cite{Miyachi2009PAMS}
showed that, for $1\leq p,q\leq \infty $ and $s\in
\mathbb{R}$, $e^{i|D|^{\beta }}$ is bounded from $M_{p,q}^{s}$ to $
M_{p,q}$ if and only if $s\geq (\beta -2)n|1/p-1/2|$. Roughly speaking, the boundedness behavior of $e^{i|D|^{\b}}$
is better on the modulation spaces than on the Lebesgue spaces.

Modulation space, introduced by Feichtinger \cite{Feichtinger1983}  in 1983, are defined by measuring
the size of a function or temperate distribution in the time-frequency plane
plane. Because of its better boundedness property of unimodular Fourier multipliers mentioned above, and its
better product and convolution properties (see \cite{GuoChenFanZhao2018MJM}), the modulation space have
been regarded as appropriate function spaces for the study of partial differential
equations (see \cite{RuzhanskySugimotoWang2012, WangHudzik2007JDE}). Concrete definition of modulation space will be given in Section 2.
For recent research on the boundedness result on modulation space of unimodular multipliers
one can also see e.g.\cite{CorderoNIcola2010JFAA, HuangChenFanZhu2018Banach, Tomita2010, ZhaoChenFanGuo2016NLAT}.

Among numerous references, we observe that in a recent work by Nicola-Primo-Tabacoo \cite{NicolaTabacoo2018JPDOA},
the authors use a more structured approach to deal with the boundedness of $e^{i\mu(D)}$.
In fact, inspired by Concetti-Toft \cite{ConcettiToft2009Arkiv}, Nicola-Primo-Tabacoo use the Taylor expansion of $e^x$,
to connect the regularity information between $\mu$ and $e^{i\mu}$.
Note that this Taylor expansion technique was also used in the proof of \cite[Theorem 9]{Benyi2007JFA}.
We also remark that
the process of regularity transformation from $\mu$ to $e^{i\mu}$  was in fact hidden in the action of
"taking the derivation of $e^{i\mu(\xi)}$" in some previous reference (see \cite{Miyachi2009PAMS, ZhaoChenFanGuo2016NLAT}).

In 1980s, H. G. Feichtinger \cite{Feichtinger1980Wiener} developed a
far-reaching generalization of amalgam spaces (which are now called Wiener amalgam spaces),
which allows a wide range of Banach spaces to serve as local or global components.
In this paper, we consider a special Wiener amalgam space, namely, the Wiener amalgam spaces $\wpqs$,
which can be viewed as the Fourier transform of modulation space.
Similar to modulation spaces, due to the localization property,
Wiener amalgam spaces also have
better boundedness results of unimodular Fourier multipliers and
better product and convolution properties (see \cite{GuoChenFanZhao2018MJM}).
Thus, it is also regarded to be an appropriate function spaces in studying PDE problems
(see \cite{CorderoNicola2007JFA,CorderoNicola2008JMAA, CorderoNicola2008JDE}).

In contrast to the fully development of research on the modulation spaces $\mpqs$,
the behaviors on the Wiener amalgma spacecs $\wpqs$ of most unimodular multipliers
still remains open on both sides of sufficiency and necessity.
As far as we know, there are only few results concerning this topic, one can find them in \cite{Cunanan2014JMAA, KatoTomita2017Arxiv}.
For the unimodular multiplier whose prototype is $e^{i|D|^{\b}}$,
the previous results associated with $\b\in (0,2]$ can be listed as follows.
We put the discussion about the high growth case $\b>2$ in Section 5.
\\\\
\textbf{Theorem A\ (\cite[Corollary 2.1]{Cunanan2014JMAA}).} Suppose $1\leq p,q\leq \fy$, $0<\b\leq 1$,
then $e^{i|D|^{\b}}$ is bounded on $\wpq$.
\\\\
\textbf{Theorem B\ (\cite[Proposition 3.1]{Cunanan2014JMAA}).} Suppose $1\leq p,q\leq \fy$,
 $0<\b\leq 2$. Let $\mu$ be a real-valued $C^{[n/2]+1}(\rn\bs\{0\})$ function satisfying
 \be
 |\partial^{\g}\mu(\xi)|\lesssim |\xi|^{\b-|\g|},\ \ \ |\g|\leq [n/2]+1.
 \ee
Then $e^{i\mu(D)}: W^{p,q}_{\d} \rightarrow \wpq$ is bounded for $\d\geq n\b/2|1/p-1/q|$ with the strict inequality when $p\neq q$.
\\~

Recently, Kato-Tomita\cite{KatoTomita2017Arxiv} give a sharp estimate of Schr\"{o}dinger operator $e^{i|D|^2}$.
\\
\textbf{Theorem C\ (\cite[Theorem 1.1]{KatoTomita2017Arxiv}).} Suppose $1\leq p,q\leq \fy$.
Then $e^{i|D|^2}: W^{p,q}_{\d} \rightarrow \wpq$ is bounded if and only if $\d\geq n|1/p-1/q|$ with the strict inequality when $p\neq q$.
\\~

We have two observations concerning Theorem A-C.

\bn
\item
Comparing with Theorem A, one can see that the conclusion in Theorem B is obviously not optimal.
Since $e^{i|D|}$ is bounded on $\wpq$ in Theorem A, but the corresponding boundedness result
in Theorem B with $\b=1$ need a potential loss greater than $n/2|1/p-1/q|$.
\item
Theorem C is a sharp estimate of boundedness result of Schr\"{o}dinger operator.
However, Schr\"{o}dinger operator is so special that the corresponding estimates can be obtained conveniently in a precise form.
This convenience will disappear even when considering the prototype $\mu(\xi)=|\xi|^{\b}$ for $1<\b<2$.
\en

Part of the present work can be regarded as a response to the two observations mentioned above.
We focus on the sufficiency and necessity for
the boundedness on the Wiener amalgam spaces $\wpqs$ in the full range $0<p,q\leq \fy, s\in \mathbb{R}$,
of $e^{i\mu(D)}$ whose prototype is $e^{i|D|^{\b}} (0<\b\leq 2)$.
First, we state our results for sufficiency. Denote by $\dot{p}:= \min\{1,p\}$,
$\dot{q}:= \min\{1,q\}$ and $\dot{p}\wedge \dot{q}=\min\{\dot{p},\dot{q}\}$.

\begin{theorem}[Potential persistence]\label{thm, wiener, potential persistence}
  Suppose $0<p,q\leq \infty$.
  Let $\mu\in C^1(\bbR^n)$ be a real-valued function satisfying
   \be
  \nabla \mu\in W^{\infty,\fy}_{n/(\dot{p}\wedge \dot{q})+\ep}
  \ee
  for some $\ep>0$.
  Then $e^{i\mu(D)}$ is bounded on $\wpq$.
\end{theorem}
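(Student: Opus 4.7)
The strategy is to pass to the Fourier side, reducing the boundedness of $e^{i\mu(D)}$ on $\wpq$ to a pointwise-multiplication question on a modulation space, and then to extract the required regularity of the symbol $e^{i\mu}$ from the hypothesis on $\nabla\mu$ by a Taylor-expansion argument.

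First, by the well-known Fourier identification between $\wpq$ and (up to index conventions) $M^{p,q}$, the boundedness of $e^{i\mu(D)}$ on $\wpq$ is equivalent to the boundedness of the pointwise-multiplication operator $F\mapsto e^{i\mu}F$ on the corresponding modulation space. In the full quasi-Banach range $0<p,q\le\fy$, a standard multiplier lemma asserts that if $g\in W^{\fy,\fy}_{n/(\dot p\wedge\dot q)+\ep}$, then $F\mapsto g\cdot F$ is bounded on $M^{p,q}$ with operator norm controlled by $\|g\|_{W^{\fy,\fy}_{n/(\dot p\wedge\dot q)+\ep}}$; the exponent $n/(\dot p\wedge\dot q)+\ep$ is exactly what is needed for Gabor coefficients of $g$ to be $\ell^{\dot p\wedge\dot q}$-summable over $\zn$, which is the Wiener-algebra property in the quasi-Banach setting.

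The theorem is thereby reduced to the key estimate
\[
\|e^{i\mu}\|_{W^{\fy,\fy}_{n/(\dot p\wedge\dot q)+\ep}}\;\ls\;\Phi\bigl(\|\nabla\mu\|_{W^{\fy,\fy}_{n/(\dot p\wedge\dot q)+\ep}}\bigr)
\]
for some continuous function $\Phi$. Following the Taylor-expansion technique of \cite{Benyi2007JFA,NicolaTabacoo2018JPDOA} (originally from \cite{ConcettiToft2009Arkiv}), I would fix a smooth partition of unity $\{\varphi_k\}_{k\in\zn}$ on $\rn$ with each $\varphi_k$ supported in the unit cube $Q_k=k+[-1,1]^n$ and, on $\mathrm{supp}\,\varphi_k$, factor
\[
\varphi_k(\xi)\,e^{i\mu(\xi)}\;=\;e^{i\mu(k)}\,\varphi_k(\xi)\sum_{j\ge 0}\frac{i^j\bigl(\mu(\xi)-\mu(k)\bigr)^j}{j!}.
\]
The mean-value inequality gives $|\mu(\xi)-\mu(k)|\le C\|\nabla\mu\|_{L^\fy(Q_k)}$ uniformly in $k$, and iterating a local Banach-algebra estimate on a unit-scale modulation space (equivalently, on $\fl1$) bounds $\|\varphi_k(\mu-\mu(k))^j\|_{\fl1}$ by $C^j\|\nabla\mu\|_{L^\fy(Q_k)}^j$, so that the factor $1/j!$ forces absolute convergence of the series.

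The main obstacle is the quasi-Banach case $\dot p\wedge\dot q<1$, where the weight exponent $n/(\dot p\wedge\dot q)+\ep$ can be large and the required local smoothness of $e^{i\mu}$ must be monitored at correspondingly high order. I would handle this via the STFT characterization of $W^{\fy,\fy}_s$: the unit-scale STFT of $e^{i\mu}$ over each cube $Q_k$ is controlled, through the series expansion above, by that of $\mu$ restricted to $Q_k$, which the hypothesis $\nabla\mu\in W^{\fy,\fy}_{n/(\dot p\wedge\dot q)+\ep}$ delivers uniformly in $k$. Summing the resulting bounds over $j$ (absorbed by $1/j!$) and then over $k\in\zn$ against the polynomial weight $(1+|k|)^{n/(\dot p\wedge\dot q)+\ep}$ completes the key estimate, and hence the proof.
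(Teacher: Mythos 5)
Your overall strategy is exactly the paper's: pass to the Fourier side via the convolution/product relation, reduce to showing that $e^{i\mu}$ (or its inverse Fourier transform) lies in the appropriate $W^{\dot p\wedge\dot q,\fy}$/$M^{\fy,\dot p\wedge\dot q}$ space, localize to unit cubes using the uniform decomposition, and expand the local exponential $e^{i(\mu-\mu(k))}$ by a Taylor series whose absolute convergence is governed by the Wiener-algebra property of $W^{\fy,\fy}_{n/(\dot p\wedge\dot q)+\ep}$.

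However, there is a genuine gap in your intermediate estimate. You claim that the mean-value inequality $|\mu(\xi)-\mu(k)|\le C\|\nabla\mu\|_{L^\fy(Q_k)}$ leads, after iterating a Banach-algebra bound, to $\|\varphi_k(\mu-\mu(k))^j\|_{\fl1}\lesssim C^j\|\nabla\mu\|_{L^\fy(Q_k)}^j$. This step fails: a pointwise bound on a compactly supported function gives no control over its $\fl1$ norm (the indicator of a cube is bounded and compactly supported but is not in $\fl1$), let alone its $W^{\fy,\fy}_{n/(\dot p\wedge\dot q)+\ep}$ norm, which is what the quasi-Banach case actually requires. In particular, under the hypothesis $\mu\in C^1$ alone one cannot control the local $\fl1$ norm of $\mu-\mu(k)$; the $W^{\fy,\fy}_{n/(\dot p\wedge\dot q)+\ep}$ membership of $\nabla\mu$ must enter the estimate quantitatively, not merely as a source of $L^\fy$ bounds.

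The fix is precisely what the paper does: keep the Taylor remainder in integral form, $R_k(\xi)=\int_0^1\xi\cdot\nabla\mu(k+t\xi)\,dt$, so that $\|\s_0^*R_k\|_{W^{\fy,\fy}_{n/(\dot p\wedge\dot q)+\ep}}$ can be bounded by pulling the $W^{\fy,\fy}_s$ norm inside the integral, invoking the product algebra $W^{\fy,\fy}_s\cdot W^{\fy,\fy}_s\subset W^{\fy,\fy}_s$ to handle the linear factor $\xi$, and using the dilation estimate $\|f(t\cdot)\|_{W^{\fy,\fy}_s}\lesssim\|f\|_{W^{\fy,\fy}_s}$ for $t\in(0,1]$ to dispose of the parameter $t$. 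The result is a bound by $\sum_{|\g|=1}\|\partial^\g\mu\|_{W^{\fy,\fy}_{n/(\dot p\wedge\dot q)+\ep}}$, uniformly in $k$, which is the quantity your hypothesis actually controls. Your last paragraph gestures at using the STFT of $\mu$ (rather than merely $\|\nabla\mu\|_{L^\fy}$) in the quasi-Banach case, which is in the right spirit, but the earlier $L^\fy$-based iteration must be replaced throughout, including in the Banach case $p,q\ge1$.
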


\begin{theorem}[Potential loss]\label{thm, wiener, potential loss}
  Suppose $0<p,q\leq \infty$.
  Let $\mu$ be a real-valued $C^1(\bbR^n)$ function satisfying
  \be
  \langle \xi \rangle^{-s}\nabla \mu(\xi) \in (W^{\infty,\fy}_{n/(\dot{p}\wedge \dot{q})+\ep})^n
  \ee
  for some $s,\ep>0$.
  Then $e^{i\mu(D)}: W^{p,q}_{\d} \rightarrow \wpq$ is bounded for $\d>sn/(\dot{p}\wedge \dot{q})$.
\end{theorem}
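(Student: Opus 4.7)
The plan is to reduce Theorem~\ref{thm, wiener, potential loss} to Theorem~\ref{thm, wiener, potential persistence} by combining a dyadic frequency decomposition with a frequency-localized replacement of the symbol $\mu$. Fix an inhomogeneous Littlewood--Paley partition $\{\phi_j\}_{j\geq 0}$ with $\mathrm{supp}\,\phi_j\subset\{|\xi|\sim 2^j\}$ for $j\geq 1$, slightly fattened cutoffs $\widetilde\phi_j$ that equal $1$ on $\mathrm{supp}\,\phi_j$, and put $P_j=\phi_j(D)$. For each $j$ I will construct a $C^1$ extension $\mu_j$ that coincides with $\mu$ on $\mathrm{supp}\,\widetilde\phi_j$ and satisfies
\[
\|\nabla\mu_j\|_{W^{\infty,\infty}_{n/(\dot p\wedge\dot q)+\ep}}\lesssim 2^{js},
\]
the point being that the hypothesis $\langle\xi\rangle^{-s}\nabla\mu\in(W^{\infty,\infty}_{n/(\dot p\wedge\dot q)+\ep})^n$ produces exactly a factor $\langle\xi\rangle^s\sim 2^{js}$ on a dyadic annulus, while the Fourier-side weight survives multiplication by a smooth cutoff via a routine convolution estimate on the Fourier side. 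Since $\widetilde\phi_j\equiv 1$ on $\mathrm{supp}\,\phi_j$, the symbols $e^{i\mu(\xi)}\phi_j(\xi)$ and $e^{i\mu_j(\xi)}\phi_j(\xi)$ agree, so $e^{i\mu(D)}P_jf=e^{i\mu_j(D)}P_jf$.

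I next invoke a quantitative form of Theorem~\ref{thm, wiener, potential persistence}: its proof (via kernel/short-time Fourier transform estimates) should yield polynomial dependence of the operator norm on $\|\nabla\mu\|_{W^{\infty,\infty}_{n/(\dot p\wedge\dot q)+\ep}}$ with the natural exponent $n/(\dot p\wedge\dot q)$, i.e.
\[
\|e^{i\mu_j(D)}P_jf\|_{W^{p,q}}\lesssim\bigl(1+\|\nabla\mu_j\|_{W^{\infty,\infty}_{n/(\dot p\wedge\dot q)+\ep}}\bigr)^{n/(\dot p\wedge\dot q)}\|P_jf\|_{W^{p,q}}\lesssim 2^{jsn/(\dot p\wedge\dot q)}\|P_jf\|_{W^{p,q}}.
\]
Writing $\alpha=\dot p\wedge\dot q$, the $\alpha$-triangle inequality for the quasi-norm $\|\cdot\|_{W^{p,q}}$ combined with the Bernstein-type bound $\|P_jf\|_{W^{p,q}}\lesssim 2^{-j\delta}\|f\|_{W^{p,q}_\delta}$ (which follows from the Fourier multiplier $\phi_j(\xi)\langle\xi\rangle^{-\delta}$ having $W^{p,q}\to W^{p,q}$ norm $\lesssim 2^{-j\delta}$, obtained by scaling an $\scrF L^1$ or analogous multiplier criterion) then gives
\[
\|e^{i\mu(D)}f\|_{W^{p,q}}^{\alpha}\leq\sum_j\|e^{i\mu(D)}P_jf\|_{W^{p,q}}^{\alpha}\lesssim\sum_j 2^{j(sn-\delta\alpha)}\|f\|_{W^{p,q}_\delta}^{\alpha}\lesssim\|f\|_{W^{p,q}_\delta}^{\alpha},
\]
where the geometric series is summable exactly when $\delta\alpha>sn$, i.e.\ $\delta>sn/(\dot p\wedge\dot q)$.

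The main obstacle is the quantitative version of Theorem~\ref{thm, wiener, potential persistence} with the correct polynomial exponent $n/(\dot p\wedge\dot q)$: the proof of Theorem~\ref{thm, wiener, potential persistence} must be revisited to track how its constants depend on $\|\nabla\mu\|_{W^{\infty,\infty}_{n/(\dot p\wedge\dot q)+\ep}}$, and the exponent $n/(\dot p\wedge\dot q)$ should fall out naturally, since it is precisely the one needed to match the sharp loss $sn/(\dot p\wedge\dot q)$. Two subsidiary issues are (i) constructing the extensions $\mu_j$ while preserving the full Fourier-weighted regularity, so that the $\ep$ is not lost when $\mu$ is multiplied by a frequency cutoff, and (ii) verifying the Bernstein-type embedding used in the summation throughout the quasi-norm range $0<p,q\leq\infty$; both are technical but expected to follow along standard lines.
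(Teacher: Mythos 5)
Your reduction to a quantitative form of Theorem~\ref{thm, wiener, potential persistence} has a real gap precisely at the step you flag as the ``main obstacle.'' The proof of Theorem~\ref{thm, wiener, potential persistence} in the paper controls $\|\s_0 e^{i\s_0^* R_k}\|_{W^{\fy,\fy}_{n/(\dot p\wedge\dot q)+\ep}}$ by summing the Taylor series of $e^{z}$ in the Banach algebra $W^{\fy,\fy}_{n/(\dot p\wedge\dot q)+\ep}$, which yields
\[
\|e^{i\mu(D)}\|_{W^{p,q}\to W^{p,q}}\lesssim \exp\Bigl(C\sum_{|\g|=1}\|\partial^{\g}\mu\|_{W^{\fy,\fy}_{n/(\dot p\wedge\dot q)+\ep}}\Bigr),
\]
i.e.\ \emph{exponential} dependence on $\|\nabla\mu\|$, not the polynomial bound $(1+\|\nabla\mu\|)^{n/(\dot p\wedge\dot q)}$ you invoke. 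Feeding $\|\nabla\mu_j\|\lesssim 2^{js}$ into the exponential gives $\exp(C2^{js})$ on the $j$-th dyadic piece, which is far too large to be compensated by the Bernstein factor $2^{-j\d}$, so the series over $j$ diverges. The polynomial exponent $n/(\dot p\wedge\dot q)$ cannot be extracted by merely ``revisiting'' the proof of Theorem~\ref{thm, wiener, potential persistence}; it is not where that proof gets its constants from.

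What the paper actually does to produce the polynomial loss is a \emph{rescaling on the frequency side} inside each uniform cube $Q_k$: it covers $\s_k$ by roughly $\lan k\ran^{sn}$ boxes of side $\lan k\ran^{-s}$ and considers the rescaled symbol $\mu_k(\xi)=\mu(\xi/\lan k\ran^s)$. The point is that $\nabla\mu_k$ is uniformly bounded (with implicit constant independent of $k$) in $W^{\fy,\fy}_{n/(\dot p\wedge\dot q)+\ep}$ after localization, so the exponential factor from the Taylor series is $O(1)$. The polynomial growth $\lan k\ran^{s(\ep+n/(\dot p\wedge\dot q))}$ then comes \emph{not} from the exponential series but from two purely geometric sources: the dilation of the weight $\lan x\ran^{n/(\dot p\wedge\dot q)+\ep}$ under scaling by $\lan k\ran^{s}$, and the $\ls$-quasi-triangle inequality over the $\lesssim\lan k\ran^{sn}$ boxes. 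If you try to prove your claimed polynomial operator-norm bound directly, you will be forced into essentially this rescaling argument at the level of unit cubes, and at that point the dyadic Littlewood--Paley decomposition is superfluous --- the uniform decomposition already does the job. (Your secondary technical points are fine: one can check that $P_j$ is uniformly bounded on $W^{p,q}_{\d}$ for all $0<p,q\le\fy$ since $\scrF^{-1}\phi_j$ concentrates near the origin with $\lan x\ran^{-N}$ tails of size $O(1)$ in $W^{\dot p\wedge\dot q,\fy}$; the extension $\mu_j$ of $\mu$ can be built without losing the Fourier-side weight; and the Bernstein bound then follows. The irreducible difficulty is only the missing quantitative lemma, and it is not small.)
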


\begin{theorem}[Interpolation case]\label{thm, interpolation case}
  Suppose $0<p,q\leq \infty$.
  Let $\mu$ be a real-valued $C^2(\bbR^n)$ function satisfying
  \be
  \langle \xi \rangle^{-s}\nabla \mu(\xi) \in (W^{\infty,\fy}_{n/(\dot{p}\wedge \dot{q})+\ep})^n,
  \ \
  \partial^{\g}\mu \in W^{\infty,\fy}_{n/\dot{p}+\ep}\ (|\g|=2),
  \ee
  for some $s,\ep>0$.
  Then $e^{i\mu(D)}: W^{p,q}_{\d} \rightarrow \wpq$ is bounded for $\d\geq sn|1/p-1/q|$
  with strict inequality when $p\neq q$.
\end{theorem}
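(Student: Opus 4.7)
The plan is complex interpolation between the diagonal case $p=q$ (zero potential loss) and a sharp endpoint estimate, both of which crucially exploit the new Hessian hypothesis $\partial^\gamma\mu\in\wyynp$ for $|\gamma|=2$.

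I would first establish the sharp endpoint $e^{i\mu(D)}\colon W^{p,\infty}_{\delta_1}\to W^{p,\infty}$ boundedly for every $\delta_1>sn/p$. For $p\leq 1$ this already follows from Theorem~\ref{thm, wiener, potential loss}; the real work lies in the range $p>1$, where Theorem~\ref{thm, wiener, potential loss} only provides the weaker $\delta_1>sn$. To upgrade from $sn/\dot p$ down to $sn/p$, I would analyze the Gabor matrix coefficients $\langle e^{i\mu(D)}\pi(z)g,\pi(z')g\rangle$ and, on each unit frequency cube centred at $\xi_0$, Taylor expand
\begin{equation*}
  \mu(\xi)=\mu(\xi_0)+\nabla\mu(\xi_0)\cdot(\xi-\xi_0)+R(\xi,\xi_0).
\end{equation*}
The affine part produces a unitary time-frequency shift, while the quadratic remainder $R$ is controlled by the Hessian hypothesis, yielding off-diagonal decay in $(z,z')$ strong enough for a Schur-type test adapted to $\ell^p_x\ell^\infty_\xi$.

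Next I would prove the diagonal estimate $e^{i\mu(D)}\colon W^{p,p}\to W^{p,p}$ without loss. Theorem~\ref{thm, wiener, potential persistence} does not apply verbatim because only $\langle\xi\rangle^{-s}\nabla\mu$, not $\nabla\mu$ itself, is assumed to sit in $\wyynp$; however, the same Gabor/Taylor-expansion machinery of the previous step adapts, since the linear term of $\mu$ on each unit cube acts unitarily regardless of the size of $\nabla\mu(\xi_0)$, so only the Hessian governs the matrix decay, which is now summable in both position and frequency thanks to the symmetric $\ell^p_x\ell^p_\xi$ structure.

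With the endpoint and diagonal in hand, the claimed range of $\delta$ follows from complex interpolation of weighted Wiener amalgam spaces,
\begin{equation*}
  \bigl(W^{p,p}_0,\,W^{p,\infty}_{\delta_1}\bigr)_\theta=W^{p,q_\theta}_{\delta_\theta},\qquad 1/q_\theta=(1-\theta)/p,\quad\delta_\theta=\theta\delta_1,
\end{equation*}
so that taking $\delta_1\downarrow sn/p$ yields $\delta_\theta>\theta sn/p=sn(1/p-1/q_\theta)$ for every $q_\theta\in(p,\infty]$. The case $q<p$ is handled by a symmetric interpolation with endpoint $W^{p,p_1}_{\delta_1}$ as $p_1\downarrow 0$ (or, when $p,q\geq 1$, by duality). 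The crux of the whole argument is the sharp endpoint: reducing the loss from $sn/\dot p$ to $sn/p$ when $p>1$ requires a delicate stationary-phase-type analysis of the Gabor matrix on each unit frequency cube, exactly where the Hessian hypothesis earns its keep.
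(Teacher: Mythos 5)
Your proposal is built on the same skeleton as the paper's argument -- a lossless diagonal estimate plus an endpoint estimate with loss, glued by interpolation -- and you are right to be suspicious of the endpoint. For $p>1$, Theorem~\ref{thm, wiener, potential loss} at the pair $(p,\infty)$ gives only $\delta_1>sn/\dot p=sn$, and interpolating that against $W^{p,p}$ yields $\delta>sn(1-p/q)$, which is strictly larger than the claimed $sn(1/p-1/q)$; the paper's one-line proof, read literally, interpolates exactly this pair and so appears to have the same defect. However, the Gabor-matrix/Schur-test machinery you propose is heavier than needed to close it. The sharp $(p,\infty)$-endpoint for $p>1$ is itself an interpolate of two estimates already available under the theorem's hypotheses: the lossless bound $e^{i\mu(D)}:W^{\infty,\infty}\to W^{\infty,\infty}$ (Lemma~\ref{lemma, modulation case} via $M^{\infty,\infty}=W^{\infty,\infty}$, which only needs $\partial^\gamma\mu\in W^{\infty,\infty}_{n+\epsilon}$) and the corner bound $W^{1,\infty}_{\delta_1}\to W^{1,\infty}$ for $\delta_1>sn$ (Theorem~\ref{thm, wiener, potential loss} at $(1,\infty)$, where the threshold $sn/(\dot p\wedge\dot q)=sn$ agrees with $sn\lvert 1-1/\infty\rvert$). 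Interpolating these two gives $W^{p,\infty}_\delta\to W^{p,\infty}$ for $\delta>sn/p$. So the repair lives entirely inside the paper's toolkit: one only needs to let the diagonal endpoint slide and to land the off-diagonal endpoint at the corner $(1,\infty)$ (resp.\ $(\infty,1)$), rather than at $(p,\infty)$; no fresh stationary-phase analysis of the Gabor matrix is required.

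Two smaller points. Your second step re-derives the diagonal estimate by hand, on the worry that Theorem~\ref{thm, wiener, potential persistence} does not apply; but the paper does not use Theorem~\ref{thm, wiener, potential persistence} here -- it uses Lemma~\ref{lemma, modulation case}, which is precisely the second-order Taylor/Hessian argument you sketch, applied through $W^{p,p}=M^{p,p}$, so no new diagonal argument is needed. And your proposed ``$p_1\downarrow 0$'' treatment of $q<p$ would invoke the diagonal estimate at exponents $r<\dot p$, which requires $\partial^\gamma\mu\in W^{\infty,\infty}_{n/r+\epsilon}$ and therefore exceeds the hypothesis $\partial^\gamma\mu\in W^{\infty,\infty}_{n/\dot p+\epsilon}$ of the theorem; when $1\leq p,q\leq\infty$ the duality route you mention in parentheses is the one that stays within the stated assumptions.
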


\begin{corollary}[Derivative condition]\label{coy, bd, exact conditions}
  Suppose $0<p,q\leq \infty$.
  Let $\ep>n(1/\dot{p}-1)$, $\b\in (0,2]$.
  Let $\mu$ be a real-valued function of class $C^{[n/(\dot{p}\wedge \dot{q})]+3}$ on $\mathbb{R}^n \backslash \{0\}$ which satisfies
\begin{equation}
|\partial^{\gamma}\mu(\xi)|\leq C_{\gamma}|\xi|^{\epsilon-|\gamma|}, \hspace{5mm} 0<|\xi|\leq 1,~|\gamma|\leq[n/(1/\dot{p}-1/2)]+1,
\end{equation}
and
\begin{equation}
\begin{cases}
  |\partial^{\gamma}\mu(\xi)|\leq C_{\gamma}|\xi|^{\beta-1}, \hspace{5mm} |\xi|>1,~1\leq|\gamma|\leq[n/(\dot{p}\wedge \dot{q})]+2,\ \text{if}
  \ \b\in (0,1],
  \\
  |\partial^{\gamma}\mu(\xi)|\leq C_{\gamma}|\xi|^{\beta-|\gamma|}, \hspace{5mm} |\xi|>1,~1\leq|\gamma|\leq[n/(\dot{p}\wedge \dot{q})]+3,\ \text{if}
  \ \b\in (1,2].
\end{cases}
\end{equation}
Then $e^{i\mu(D)}: W^{p,q}_{\d} \rightarrow \wpq$ is bounded for $\d\geq n|1/p-1/q|\max\{\b-1,0\}$
  with strict inequality when $\b>1, p\neq q$.
\end{corollary}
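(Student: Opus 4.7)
The plan is to split the argument according to whether $\beta\in(0,1]$ or $\beta\in(1,2]$ and reduce each case to one of the previous theorems. When $\beta\in(0,1]$, we have $\max\{\beta-1,0\}=0$, so the target is plain boundedness on $\wpq$, and we apply Theorem \ref{thm, wiener, potential persistence} after verifying $\nabla\mu \in W^{\infty,\infty}_{n/(\dot p \wedge \dot q)+\epsilon}$. When $\beta\in(1,2]$, we set $s=\beta-1>0$ and apply Theorem \ref{thm, interpolation case}, which immediately yields the precise range $\delta\geq sn|1/p-1/q|$ (strict when $p\neq q$); the hypotheses to verify are then $\langle\xi\rangle^{-s}\nabla\mu \in (W^{\infty,\infty}_{n/(\dot p \wedge \dot q)+\epsilon})^n$ together with $\partial^{\gamma}\mu \in W^{\infty,\infty}_{n/\dot p+\epsilon}$ for $|\gamma|=2$.

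The key technical bridge is an embedding lemma converting pointwise derivative estimates into Wiener amalgam membership: a $C^N$ function $F$ on $\bbR^n\setminus\{0\}$, with $N$ sufficiently large relative to $\sigma$, satisfying
\[
|\partial^\gamma F(\xi)|\lesssim |\xi|^{\epsilon_0-|\gamma|}\ \ (0<|\xi|\leq 1), \qquad |\partial^\gamma F(\xi)|\lesssim \langle\xi\rangle^{a-|\gamma|}\ \ (|\xi|>1),
\]
with $\epsilon_0$ large enough in terms of $\dot p$, belongs to $W^{\infty,\infty}_{\sigma}$ for appropriate $\sigma$. I expect this to follow from a dyadic frequency decomposition combined with Bernstein-type inequalities applied to each annular piece; the threshold $\epsilon>n(1/\dot p-1)$ in the first displayed hypothesis of the corollary is calibrated precisely so that the near-origin dyadic pieces summate in the quasi-Banach range $p<1$.

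Applying this lemma is then largely mechanical. For $\beta\in(0,1]$, take $F=\partial_j\mu$: the near-origin hypothesis on $\mu$ gives $|\partial^\gamma F(\xi)|\lesssim|\xi|^{\epsilon-1-|\gamma|}$ and the growth hypothesis at infinity gives $|\partial^\gamma F(\xi)|\lesssim 1$, within the allowed derivative budgets. For $\beta\in(1,2]$, apply the lemma to $F_1=\langle\xi\rangle^{-s}\partial_j\mu$, whose derivatives at infinity decay like $\langle\xi\rangle^{-1-|\gamma|}$, and to $F_2=\partial^\gamma\mu$ with $|\gamma|=2$, whose derivatives at infinity satisfy $|\partial^{\gamma'}F_2(\xi)|\lesssim \langle\xi\rangle^{\beta-2-|\gamma'|}$. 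The main obstacle I anticipate is the bookkeeping near the origin: two separate derivative budgets appear in the hypotheses, one tuned to the singularity at $0$ and one to the growth at infinity, and one must verify case by case that both suffice for the embedding lemma, in particular that the counts $[n/(1/\dot p-1/2)]+1$ and $[n/(\dot p\wedge\dot q)]+3$ have been chosen with the right slack to accommodate the interplay of $\epsilon$, $\beta$, and $\dot p\wedge\dot q$. Once these compatibilities are established, Theorems \ref{thm, wiener, potential persistence} and \ref{thm, interpolation case} deliver the stated boundedness directly.
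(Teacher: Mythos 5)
Your plan correctly identifies the right theorems to invoke: Theorem \ref{thm, wiener, potential persistence} when $\beta\in(0,1]$ and Theorem \ref{thm, interpolation case} with $s=\beta-1$ when $\beta\in(1,2]$, and the bridging idea of an embedding lemma from pointwise derivative bounds into $W^{\infty,\infty}_\sigma$ is exactly what the paper uses (its Lemma \ref{lemma, derivative to wiener}, for functions in $\calC^N$ with \emph{globally} bounded derivatives). However, there is a genuine gap that your proposal does not address: you propose to verify $\nabla\mu\in (W^{\infty,\infty}_{n/(\dot p\wedge\dot q)+\ep})^n$ (or the $\langle\xi\rangle^{-s}$--weighted analogue) directly for the given $\mu$, but the hypothesis only requires $\epsilon>n(1/\dot p-1)$, which can be arbitrarily small, in particular less than $1$. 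In that regime $\nabla\mu(\xi)$ grows like $|\xi|^{\epsilon-1}$ as $\xi\to 0$, so it is unbounded and cannot belong to $W^{\infty,\infty}_\sigma\subset L^\infty$ for $\sigma>n$. More basically, Theorems \ref{thm, wiener, potential persistence}--\ref{thm, interpolation case} require $\mu\in C^1(\bbR^n)$ or $C^2(\bbR^n)$ globally, whereas the corollary's $\mu$ is only $C^N$ on $\bbR^n\setminus\{0\}$. No amount of dyadic frequency decomposition or Bernstein bookkeeping resolves this, since the obstruction is an honest singularity of $\nabla\mu$ at the origin.

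The paper fixes this with a step your proposal omits: split $\mu=\mu_1+\mu_2$ with $\mu_1=\rho_0\mu$ compactly supported near the origin and $\mu_2=(1-\rho_0)\mu$ vanishing in a neighborhood of $0$. The singular piece is handled by a completely separate mechanism, namely Lemma \ref{lemma, near the origin}, which shows $\rho_0^*(e^{i\mu_1}-1)\in\scrF L^{\dot p}$ and hence $e^{i\mu_1}\in M^{\infty,\dot p}$, giving boundedness of $e^{i\mu_1(D)}$ on $\wpq$ through the convolution relation $\wpq*W^{\dot p,\infty}\subset\wpq$; this is exactly where the threshold $\epsilon>n(1/\dot p-1)$ and the derivative count $[n(1/\dot p-1/2)]+1$ are consumed, not in any embedding into $W^{\infty,\infty}_\sigma$. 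Only $\mu_2$, which is globally smooth with uniformly bounded derivatives of the relevant orders (after the $\langle\xi\rangle^{-s}$ weighting in the case $\beta>1$), is fed through Lemma \ref{lemma, derivative to wiener} into the hypotheses of Theorems \ref{thm, wiener, potential persistence} and \ref{thm, interpolation case}. You need to incorporate this decomposition and the separate near-origin argument; without it the proof cannot close.
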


Next, we give the following theorem for the necessity part.

\begin{theorem}[Sharp potential loss]\label{thm, wiener, sharp potential loss}
  Suppose $0<p,q\leq \infty$.
  Let $1<\b\leq 2$, and let $\mu$ be a real-valued $C^2(\bbR^n\bs \{0\})$ function
  satisfying
  \be
  (1-\r_0)\partial^{\g}\mu \in W^{\infty,\fy}_{n/(\dot{p}\wedge \dot{q})+\ep}\ (|\g|=2)\ \ \  \text{for some}\  \ep>0,
  \ee
  where $\r_{0}$  is a $C_c^{\fy}$ function supported in $B(0,1/2)$ and
  satisfies $\r_0(\xi)=1$ on $B(0,1/4)$.
  Suppose that the Hessian determinant of $\mu$ is not zero at some point $\k_0$ with $|\k_0|=1$.
  Moreover,
  \be
  \mu(\la\xi)=\la^{\b}\mu(\xi),\ \ \ \la\geq 1,\ \ \xi\in B(\k_0, r_0)\cap \bbS^{n-1}
  \ee
  for some $r_0>0$.
  Then the boundedness of $e^{i\mu(D)}: W^{p,q}_{s} \rightarrow \wpq$ implies
  \be
  s\geq n(\b-1)|1/p-1/q|
  \ee
  with strict inequality when $p\neq q$.
  \end{theorem}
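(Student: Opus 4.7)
The plan is to prove the necessary condition by wave-packet testing, using two dual test families according to whether $p>q$ or $p<q$; the case $p=q$ is trivial (the target $n(\beta-1)|1/p-1/q|=0$ is vacuously implied by boundedness). The key mechanism is that in a conic neighborhood of $\lambda\kappa_{0}$, the $\beta$-homogeneity of $\mu$ and the nondegeneracy of $H\mu(\kappa_{0})$ make $e^{i\mu(D)}$ act, up to a remainder that vanishes as $\lambda\to\infty$, like a pure translation by $\nabla\mu(\xi)/(2\pi)$ on each Fourier-localized wave packet; by the homogeneity identities $\nabla\mu(\lambda\xi)=\lambda^{\beta-1}\nabla\mu(\xi)$ and $H\mu(\lambda\xi)=\lambda^{\beta-2}H\mu(\xi)$, the relative spatial displacement between the output packets originating at Fourier positions $\eta_{k_{1}}=\lambda\kappa_{0}+\lambda^{2-\beta}k_{1}$ and $\eta_{k_{2}}=\lambda\kappa_{0}+\lambda^{2-\beta}k_{2}$ equals $H\mu(\kappa_{0})(k_{1}-k_{2})/(2\pi)+O(\lambda^{1-\beta})$.

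Fix $\chi_{0}\in C_{c}^{\infty}(B(0,1/10))$ and, with $c_{0}>0$ small, put $K_{\lambda}:=\mathbb{Z}^{n}\cap B(0,c_{0}\lambda^{\beta-1})$ and $\eta_{k}:=\lambda\kappa_{0}+\lambda^{2-\beta}k$, so that $N:=|K_{\lambda}|\asymp\lambda^{n(\beta-1)}$ and all the $\eta_{k}$ lie in the conic homogeneity region. For $p>q$ I would test with
\[
f_{\lambda}(x)=\chi_{0}^{\vee}(x)\sum_{k\in K_{\lambda}}e^{2\pi i\eta_{k}\cdot x};
\]
the $N$ Fourier bumps $\chi_{0}(\cdot-\eta_{k})$ are pairwise disjoint (spacing $\lambda^{2-\beta}\gg 1$) and sit near frequency $\lambda$, giving $\|f_{\lambda}\|_{W^{p,q}_{s}}\asymp\lambda^{s}N^{1/p}$, while $e^{i\mu(D)}f_{\lambda}$ is a sum of $N$ translated wave packets lying in $N$ pairwise disjoint unit cubes, yielding $\|e^{i\mu(D)}f_{\lambda}\|_{W^{p,q}}\asymp N^{1/q}$. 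For $p<q$ I would use the dual family
\[
\widetilde f_{\lambda}(x)=\sum_{k\in K_{\lambda}}\chi_{0}^{\vee}(x-x_{k})e^{2\pi i\eta_{k}\cdot x},
\]
with $x_{k}$ chosen so that the $N$ summands are all sent by $e^{i\mu(D)}$ to the same unit cube; then $\|\widetilde f_{\lambda}\|_{W^{p,q}_{s}}\asymp\lambda^{s}N^{1/q}$ ($N$ distinct cubes each contributing $O(1)$, weighted by $\lambda^{s}$) and $\|e^{i\mu(D)}\widetilde f_{\lambda}\|_{W^{p,q}}\asymp N^{1/p}$ (one cube with $N$ Fourier-disjoint bumps inside).

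In either regime, boundedness forces $N^{|1/p-1/q|}\lesssim\lambda^{s}$, i.e.\ $\lambda^{n(\beta-1)|1/p-1/q|}\lesssim\lambda^{s}$, which is the nonstrict conclusion $s\geq n(\beta-1)|1/p-1/q|$. The main obstacle is to carry out the stationary-phase/Taylor analysis of $e^{i\mu(D)}$ on each wave packet with enough precision to guarantee the predicted translation with an $o(1)$ error; the regime $1<\beta\leq 2$ is used through the factor $\lambda^{1-\beta}\to 0$, and the hypothesis $(1-\rho_{0})\partial^{\gamma}\mu\in W^{\infty,\infty}_{n/(\dot{p}\wedge\dot{q})+\varepsilon}$ for $|\gamma|=2$, combined with the homogeneity bound $|\partial^{3}\mu|\lesssim\lambda^{\beta-3}$ away from the origin, provides the required control on the cubic Taylor remainder of $\mu$. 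The strict inequality for $p\neq q$ is expected to follow by a refinement at the critical exponent---for instance via a logarithmic improvement in the wave-packet packing count, or via a complex-interpolation argument combining the nonstrict bound with the trivial endpoint boundedness $W^{2,2}_{0}\to W^{2,2}$.
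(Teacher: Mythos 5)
The geometric picture you build---wave packets at frequencies $\eta_{k}=\lambda\kappa_{0}+\lambda^{2-\beta}k$ that $e^{i\mu(D)}$ translates to $\Omega(1)$-separated positions via $\nabla\mu(\eta_{k})$---is the same mechanism the paper uses: its set $\xi_{l}=\langle l\rangle^{(2-\beta)/(\beta-1)}l$ inside a cone around $\kappa_{0}$ is precisely a radial $\lambda^{2-\beta}$-lattice at radius $\lambda$, and Lemma~\ref{lemma, scattered} is the scattering estimate you state informally. But there are three genuine gaps.

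First and most important, you only obtain the nonstrict inequality, and your proposed upgrades to the strict case do not work. Interpolating the necessary condition with the boundedness $W^{2,2}_{0}\to W^{2,2}$ is not a valid way to establish a negative result; and ``a logarithmic improvement in the packing count'' is not available when the test coefficients are constant on a block. The strict inequality for $p\neq q$ is exactly the point of the theorem, and it requires testing with \emph{arbitrary} weight sequences $\{a_{k}\}$, not just indicator blocks. The paper does this: Lemma~\ref{lemma, estimates of special} proves, for any truncated nonnegative sequence, that one can manufacture $F$ with $\|e^{i\mu(D)}F\|_{W^{p,q}}\sim\|a\|_{l^{p}}$ and $\|F\|_{W^{p,q}_{s}}\sim\|a\|_{l^{q}_{s/(\beta-1)}}$, so that after a rotation trick (Lemma~\ref{lemma, rotation}) the boundedness becomes \emph{equivalent} to the sequence-space embeddings $l^{q}_{s/(\beta-1)}\subset l^{p}$ (resp.\ $l^{p}\subset l^{q}_{-s/(\beta-1)}$), and Lemma~\ref{lemma, discrete embedding} then delivers the strict inequality at the endpoint. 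Your single-scale block test sees only the nonstrict boundary and cannot distinguish $\geq$ from $>$.

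Second, your control of the remainder invokes $|\partial^{3}\mu|\lesssim\lambda^{\beta-3}$, but $\mu$ is only assumed $C^{2}(\mathbb{R}^{n}\setminus\{0\})$; no third derivative exists in the hypotheses. The paper avoids this by writing the second-order Taylor remainder $R_{k}(\xi)=\sum_{|\gamma|=2}\tfrac{2\xi^{\gamma}}{\gamma!}\int_{0}^{1}(1-t)\partial^{\gamma}\mu(\xi_{k}+t\xi)\,dt$ and factoring $e^{i\mu}\widehat{F}=m\cdot\widehat{G}$, where $m$ is shown to lie in $W^{\infty,\infty}_{n/(\dot p\wedge\dot q)+\epsilon}$ using precisely the hypothesis $(1-\rho_{0})\partial^{\gamma}\mu\in W^{\infty,\infty}_{n/(\dot p\wedge\dot q)+\epsilon}$ for $|\gamma|=2$, together with the algebra and dilation properties of that space. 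Also note that for $\beta=2$ the remainder is $O(1)$, not $o(1)$ as $\lambda\to\infty$, so the ``translation up to a vanishing error'' picture fails at the Schr\"odinger endpoint; what one actually needs is uniform boundedness of $m(D)$ on $W^{p,q}$, and for $p<1$ or $q<1$ that cannot be obtained from pointwise derivative bounds alone---one must work in the $W^{\infty,\infty}_{n/(\dot p\wedge\dot q)+\epsilon}$ class.

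Third, a small bookkeeping error: for $f_{\lambda}=\chi_{0}^{\vee}\sum_{k}e^{2\pi i\eta_{k}\cdot x}$ (one spatial bump, $N$ frequency bumps) the Wiener norm scales as $\lambda^{s}N^{1/q}$, not $\lambda^{s}N^{1/p}$, while the output has $N$ disjoint spatial cubes and scales as $N^{1/p}$, not $N^{1/q}$; the roles of $p$ and $q$ in your two test families are interchanged. Since you swapped both sides, the derived condition $N^{|1/p-1/q|}\lesssim\lambda^{s}$ comes out the same, but the labeling of which family covers $p>q$ versus $p<q$ should be reversed.
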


  The following remark show that there exist plenty of functions satisfy the assumptions
  in Theorem \ref{thm, wiener, sharp potential loss}. The proof will be stated in the end of Section 4.
  \begin{remark}\label{rk, plenty funtions}
    Let $\b\in (1,2]$ and let $\mu$ be a real-valued homogeneous $C^{\fy}(\rn\bs \{0\})$ function of degree $\b$,
    satisfying $\mu(\xi)\neq 0$ for all $\xi\neq 0$.
    Then $\mu$ satisfies all the assumptions in Theorem \ref{thm, wiener, sharp potential loss}.
  \end{remark}

  As an application, we give the following characterization for the boundedness of our prototype $e^{i|D|^{\b}}$.
  \begin{corollary}[Return to the prototype]\label{coy, wiener, sharp potential loss}
  Let $1\leq p\leq \fy$, $0<q\leq\fy$,
  $\b\in (0,2]$. We have $e^{i|D|^{\b}}: W_{p,q}^{\d}\rightarrow \wpq$ is bounded if and only if
  \be
  \d\geq n|1/p-1/q|\max\{\b-1,0\}
  \ee
  with strict inequality when $\b>1, p\neq q$.
  \end{corollary}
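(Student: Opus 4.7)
I would split the argument along the axis sufficiency/necessity, and inside each along the ranges $\b\in(0,1]$ and $\b\in(1,2]$.

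\textbf{Sufficiency.} The plan is to apply Corollary~\ref{coy, bd, exact conditions} directly to $\mu(\xi)=|\xi|^\b$. Since $p\ge 1$ we have $\dot p=1$, so the hypothesis $\ep>n(1/\dot p-1)$ collapses to $\ep>0$. Choosing $\ep\in(0,\b]$, the homogeneity bound $|\pa^\g|\xi|^\b|\ls|\xi|^{\b-|\g|}$ dominates $|\xi|^{\ep-|\g|}$ on $0<|\xi|\le 1$, verifying the near-origin derivative condition up to any prescribed order. The high-frequency condition is handled by the same homogeneity: for $\b\in(0,1]$ and $|\g|\ge 1,\ |\xi|>1$ one has $|\xi|^{\b-|\g|}\le|\xi|^{\b-1}$, while for $\b\in(1,2]$ the bound is already in the required form. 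Corollary~\ref{coy, bd, exact conditions} then delivers $e^{i|D|^\b}\colon W^{p,q}_\d\to W^{p,q}$ for $\d\ge n|1/p-1/q|\max\{\b-1,0\}$ with strict inequality when $\b>1$ and $p\ne q$.

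\textbf{Necessity for $\b\in(1,2]$.} Here I would invoke Theorem~\ref{thm, wiener, sharp potential loss} through Remark~\ref{rk, plenty funtions}: $|\xi|^\b$ is smooth, homogeneous of degree $\b\in(1,2]$, and nonvanishing on $\rn\bs\{0\}$, so Remark~\ref{rk, plenty funtions} certifies every hypothesis of the theorem. The theorem immediately produces $\d\ge n(\b-1)|1/p-1/q|$ with strict inequality when $p\ne q$.

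\textbf{Necessity for $\b\in(0,1]$.} In this range the condition reduces to $\d\ge 0$, which I would derive by a wave-packet test. Fix a Schwartz $g$ with $\hat g$ supported in the unit ball and put $f_N(x)=e^{2\pi iN\cdot x}g(x)$ for $|N|\to\infty$. Translation-modulation covariance of the short-time Fourier transform gives $\|f_N\|_{W^{p,q}_\d}\sim\lan N\ran^\d$ and $\|f_N\|_{W^{p,q}}\sim 1$. On the Fourier support of $\hat f_N$ the Taylor expansion
\[
|\xi|^\b=|N|^\b+\b|N|^{\b-2}N\cdot(\xi-N)+O(|N|^{\b-2})
\]
has bounded linear piece (the group velocity $\b|N|^{\b-1}$ stays $\le\b\le 1$ for $|N|\ge 1$) and a quadratic remainder which vanishes as $|N|\to\infty$ because $\b\le 1$. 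Hence $e^{i|D|^\b}f_N$ agrees, up to constant phases and $(1+o(1))$ factors, with a translate of $f_N$ by a bounded vector, and translation invariance of $W^{p,q}$ gives $\|e^{i|D|^\b}f_N\|_{W^{p,q}}\gtrsim 1$ uniformly in $N$. The assumed boundedness therefore forces $1\ls\lan N\ran^\d$, and letting $|N|\to\infty$ produces $\d\ge 0$.

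\textbf{Main obstacle.} The substantive analysis sits in Corollary~\ref{coy, bd, exact conditions} and Theorem~\ref{thm, wiener, sharp potential loss}, so the corollary is essentially bookkeeping once one has Remark~\ref{rk, plenty funtions}. The only step that is not a direct citation is the uniform lower bound on $\|e^{i|D|^\b}f_N\|_{W^{p,q}}$ in the regime $\b\le 1$; the key point is that this is exactly the range in which the group velocity $\b|N|^{\b-1}$ remains bounded as $|N|\to\infty$, so translation invariance of $W^{p,q}$ closes the argument and there is no genuine technical hurdle beyond this observation.
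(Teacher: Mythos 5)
Your proof is correct and, for the sufficiency and the $\b\in(1,2]$ necessity, it matches the paper: you route the sufficiency through Corollary~\ref{coy, bd, exact conditions} exactly as the paper does (the paper does not spell out the hypothesis check for $\mu(\xi)=|\xi|^\b$, but your verification is routine and right), and for the $\b\in(1,2]$ necessity you cite Theorem~\ref{thm, wiener, sharp potential loss} through Remark~\ref{rk, plenty funtions}, which is merely a packaging of the direct verification the paper writes out inline using Lemma~\ref{lemma, derivative to wiener}.

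Where you genuinely diverge is the $\b\in(0,1]$ necessity. The paper's argument is a one-line invertibility trick: by Theorem A (or Corollary~\ref{coy, bd, exact conditions}), both $e^{i|D|^\b}$ and $e^{-i|D|^\b}$ are bounded on $W^{p,q}$ for $\b\le 1$, so boundedness of $e^{i|D|^\b}:W^{p,q}_\d\to W^{p,q}$ gives
$\|f\|_{W^{p,q}}=\|e^{-i|D|^\b}e^{i|D|^\b}f\|_{W^{p,q}}\lesssim\|e^{i|D|^\b}f\|_{W^{p,q}}\lesssim\|f\|_{W^{p,q}_\d}$,
i.e.\ $W^{p,q}_\d\hookrightarrow W^{p,q}$, hence $\d\ge 0$. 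Your wave-packet test reaches the same conclusion but with more work: one must make the ``$(1+o(1))$ factors'' precise, namely show that the quadratic remainder $r_N$ of $|\xi|^\b$ near $\xi=N$ satisfies $\|\pa^\g r_N\|_{L^\fy(B(0,1))}=O(|N|^{\b-2})\to 0$ for all $|\g|\le M$, then invoke Lemma~\ref{lemma, derivative to wiener} (or a similar pointwise-to-$W^{\fy,\fy}_M$ embedding) to conclude $e^{ir_N(D)}g\to g$ in $W^{p,q}$ so that the norm stays bounded below. Your physical explanation (bounded group velocity $\b|N|^{\b-1}\le\b$ for $\b\le 1$) correctly identifies why the translation remains controlled, but the invertibility route avoids the whole Taylor-expansion/perturbation bookkeeping and is what I'd recommend adopting. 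That said, your route does have the side benefit of making the ``return to the prototype'' self-contained at the level of test functions rather than relying on invertibility, which would matter if one wanted a version of this statement for a $\mu$ that is not exactly $-\mu'$-invertible.

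One minor caveat on the sufficiency: in your verification of the near-origin condition, the $\g=0$ case of $|\pa^\g|\xi|^\b|\lesssim|\xi|^{\ep-|\g|}$ forces $\ep\le\b$; you do pick $\ep\in(0,\b]$, so this is consistent, but it is worth flagging that $\ep$ cannot be chosen arbitrarily small and independent of $\b$ here — exactly the constraint you impose.
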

Our new contribution are listed as follows.
\bn
\item For the sufficiency part, we first establish a useful criterion for the boundedness on the Wiener amalgam spaces in the full range $0<p,q\leq \fy$
of $e^{i\mu(D)}$ whose prototype is $e^{i|D|^{\b}}$ with $\b\in (0,2]$.
\item For the necessity part, we first determine the sharp potential loss for a large family of unimodular multipliers
on Wiener amalgam spaces.
\item As an application, we obtain the sharp $W_{p,q}^{\d}-\wpq$ boundedness results
for the operators $e^{i|D|^{\b}}$ with $\b\in (0,2]$.
Even in this special case, our results are an essential improvement of the previous results (see Theorem B and Theorem C).
\en

The rest of this paper is organized as follows.
In Section 2, we collected a number of definitions and auxiliary results, including
the dilation and convolution properties of certain Wiener amalgam spaces.

Section 3 is devoted to the proof of theorems and corollaries for the sufficiency part.
The first key point is to
find a suitable working space for $e^{i\mu}$, such that the functions in this working space can be localized in the time plane.
Although the convolution relations $W^{\dot{p}\wedge \dot{q},\fy}_{-\d}\ast W^{p,q}_{\d}\subset \wpq$ supply
a natural working space $M^{\fy,\dot{p}\wedge \dot{q}}=\scrF W^{\dot{p}\wedge \dot{q},\fy}$
for $\langle \cdot \rangle^{-\d}e^{i\mu}$ to live in, we observe that
$M^{\fy,\dot{p}\wedge \dot{q}}$ is not a suitable spaces for time localization.
In consideration of the time localization property, and the optimal embedding relation with $M^{\fy,\dot{p}\wedge \dot{q}}$,
we choose $W^{\fy,\fy}_{n/(\dot{p}\wedge \dot{q})+\ep}=M^{\fy,\fy}_{n/(\dot{p}\wedge \dot{q})+\ep}$ as our desirable working space.
Then, by time localization and Taylor expansion (of order 1), we give the proofs of Theorem \ref{thm, wiener, potential persistence}
and Theorem \ref{thm, wiener, potential loss}.
Combining with the boundedness property on the diagonal line case $p=q$,
we further give the proof of Theorem \ref{thm, interpolation case} by an interpolation argument.
Finally, Corollary \ref{coy, bd, exact conditions} is proved by establishing an embedding relations between
the working space and the function space with bounded derivative.

Section 4 contains the proof of Theorem \ref{thm, wiener, sharp potential loss} and Corollary \ref{coy, wiener, sharp potential loss}. Under the assumption of non-degenerate Hessian matrix, we find that the set
$\{\nabla \mu(\xi): \xi\in \rn\}$ is scattered in the sense that
the distance between
$\nabla \mu(\xi_1)$ and $\nabla \mu(\xi_2)$ has a lower bound according to the distance between $\xi_1$ and $\xi_2$, and
according to the value of $\b$.
In fact, the scattered degree of $\{\nabla \mu(\xi): \xi\in \rn\}$ is increasing as the value of $\b\in (1,2]$.
In particularly, when $\b=2$, the scattered degree of $\{\nabla \mu(\xi): \xi\in \rn\}$ is just like that of $\{\xi: \xi\in \rn\}$.
By this observation, we give a detailed scattered property in Lemma \ref{lemma, scattered},
which can be further used to obtain some useful estimates of special functions in Lemma \ref{lemma, estimates of special}.
Then, by a rotation trick, the boundedness of $e^{i\mu(D)}$ can be reduced to the simple embedding of
certain discrete sequences, which leads to our desired conclusion in Theorem \ref{thm, wiener, sharp potential loss}.
As a return to the prototype, Corollary \ref{coy, wiener, sharp potential loss} is the direct conclusion of
Corollary \ref{coy, bd, exact conditions} and Theorem \ref{thm, wiener, sharp potential loss}.

Some complements are prepared in the Section 5. In the high growth case of $\mu$
whose prototype is $\mu(\xi)=|\xi|^{\b}(\b>2)$, we find that
the working space should be replaced by some Wiener amalgam space without potential,
just like the working space used in the modulation case.
Based on this observation, we give some boundedness results for the high growth case.

\section{PRELIMINARIES}

We start this section by recalling some notations. Let $C$ be a positive
constant that may depend on $n,p,q,s,\beta .$ The
notation $X\lesssim Y$ denotes the statement that $X\leq CY$, the notation $%
X\sim Y$ means the statement $X\lesssim Y\lesssim X$.
For a multi-index $%
k=(k_{1},k_{2},...k_{n})\in \mathbb{Z}^{n}$, we denote%
\begin{equation*}
|k|_{\infty }:=\max_{i=1,2...n}|k_{i}|,\ \langle k\rangle
:=(1+|k|^{2})^{\frac{1}{2}}.
\end{equation*}

In this paper, for the sake of simplicity, we use the notation $"\mathscr{L}"$ to denote some large positive number which may be changed
corresponding to the exact environment.

Let $\mathscr {S}:= \mathscr {S}(\mathbb{R}^{n})$ be the Schwartz space
and $\mathscr {S}':=\mathscr {S}'(\mathbb{R}^{n})$ be the space of tempered distributions.
We define the Fourier transform $\mathscr {F}f$ and the inverse Fourier transform $\mathscr {F}^{-1}f$  of $f\in \mathscr {S}(\mathbb{R}^{n})$ by
$$
\mathscr {F}f(\xi)=\hat{f}(\xi)=\int_{\mathbb{R}^{n}}f(x)e^{-2\pi ix\cdot \xi}dx
~~
,
~~
\mathscr {F}^{-1}f(x)=\hat{f}(-x)=\int_{\mathbb{R}^{n}}f(\xi)e^{2\pi ix\cdot \xi}d\xi.
$$

We recall some definitions of the function spaces treated in this paper.
\begin{definition}
Let $0<p \leq \infty$, $s\in \mathbb{R}$. The weighted Lebesgue space $L_{x, s}^p$ consists of all measurable functions $f$ such that
\begin{numcases}{\|f\|_{L_{x, s}^p}:=}
     \left(\int_{\mathbb{R}^n}|f(x)|^p \langle x\rangle^{sp} dx\right)^{{1}/{p}}, &$p<\infty$   \\
     ess\sup_{x\in \mathbb{R}^n}|f(x)\langle x\rangle^s|,  &$p=\infty$
\end{numcases}
is finite.
If $f$ is defined on $\mathbb{Z}^n$, we denote
\begin{numcases}{\|f\|_{l_{k,s}^{p}}:=}
\left(\sum_{k\in \mathbb{Z}^n}|f(k)|^p \langle k\rangle^{sp}\right)^{{1}/{p}}, &$p<\infty$
\\
\sup_{k\in \mathbb{Z}^n}|f(k)\langle k\rangle^s|,\hspace{15mm} &$p=\infty$
\end{numcases}
and $l_{k,s}^p$ as the (quasi) Banach space of functions $f: \mathbb{Z}^n\rightarrow \mathbb{C}$ whose $l_{k,s}^p$ norm is finite.
We write $L_s^p$, $l_s^{p}$ for short, respectively, if there is no confusion.
\end{definition}

We recall an embedding relation between weighted sequences.
This lemma is easy to be verified, so we omit the proof here.

\begin{lemma}\label{lemma, discrete embedding}
Suppose $0<q_1,q_2\leq \infty$, $s_1,s_2\in \mathbb{R}$. Then
\be
l^{q_1}_{s_1}\subset l^{q_2}_{s_2}
\ee
holds if and only if one of the following holds:
\begin{eqnarray}
&&
\frac{1}{q_2}\leq \frac{1}{q_1},~s_2\leq s_1,
\\
&&
\frac{1}{q_2}> \frac{1}{q_1},~\frac{1}{q_2}+\frac{s_2}{n}< \frac{1}{q_1}+\frac{s_1}{n}.
\end{eqnarray}
\end{lemma}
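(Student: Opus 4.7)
The plan is to split into the two regimes singled out in the statement, namely $1/q_2\le 1/q_1$ versus $1/q_2>1/q_1$, and verify sufficiency and necessity in each case by elementary arguments on sequences.

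First regime ($1/q_2\le 1/q_1$, i.e., $q_1\le q_2$). For sufficiency under the additional hypothesis $s_2\le s_1$, I would use the pointwise bound $\langle k\rangle^{s_2}\le \langle k\rangle^{s_1}$ combined with the classical nesting $l^{q_1}\subset l^{q_2}$ for sequences; applied to the weighted sequence $\langle k\rangle^{s_1}f(k)$ these two facts combine to give $\|f\|_{l^{q_2}_{s_2}}\le \|f\|_{l^{q_1}_{s_1}}$. For necessity I would test with the Kronecker sequence $\delta_{k_0}$, whose norms are $\|\delta_{k_0}\|_{l^{q_j}_{s_j}}=\langle k_0\rangle^{s_j}$; sending $|k_0|\to\infty$ forces $s_2\le s_1$.

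Second regime ($1/q_2>1/q_1$, i.e., $q_1>q_2$). Sufficiency will come from a H\"older argument: choose $r$ by $1/r=1/q_2-1/q_1>0$ and decompose $\langle k\rangle^{s_2}f(k)=\langle k\rangle^{s_1}f(k)\cdot\langle k\rangle^{s_2-s_1}$. Generalized H\"older (valid also in the quasi-Banach range) then gives
\begin{equation*}
\|f\|_{l^{q_2}_{s_2}}\le \|f\|_{l^{q_1}_{s_1}}\bigl\|\langle k\rangle^{s_2-s_1}\bigr\|_{l^r(\mathbb{Z}^n)},
\end{equation*}
and the weight factor on the right is finite exactly when $r(s_1-s_2)>n$, equivalently $1/q_1+s_1/n>1/q_2+s_2/n$.

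For the non-strict half of the necessity I would test with the annular indicator $f_N=\mathbf{1}_{A_N}$ on $A_N=\{k\in\mathbb{Z}^n\colon N\le|k|<2N\}$; the cardinality $\#A_N\sim N^n$ yields $\|f_N\|_{l^{q_j}_{s_j}}\sim N^{n/q_j+s_j}$, so uniform boundedness in $N$ forces $1/q_2+s_2/n\le 1/q_1+s_1/n$. To rule out the borderline equality, I would introduce the logarithmic perturbation $f(k):=\langle k\rangle^{-(n/q_1+s_1)}(\log(2+|k|))^{-\alpha}$: in the equality case both norm sums telescope to the same shape $\sum_k\langle k\rangle^{-n}(\log(2+|k|))^{-\alpha q_j}$, which converges iff $\alpha q_j>1$, and choosing $\alpha$ in the non-empty open interval $(1/q_1,1/q_2)$ produces $f\in l^{q_1}_{s_1}\setminus l^{q_2}_{s_2}$. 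The only delicate point worth flagging is to confirm that H\"older retains its pointwise form on $l^p$ for $0<p<1$ (with constant $1$), which is what lets the whole argument run uniformly over the quasi-Banach range $0<q_1,q_2\le\infty$.
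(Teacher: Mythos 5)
Your proof is correct and complete. The paper omits the proof of this lemma, remarking only that it "is easy to be verified," so there is no proof in the paper to compare against; your argument supplies the missing verification. The structure is sound: the two regimes $1/q_2\le 1/q_1$ and $1/q_2>1/q_1$ are exhaustive and mutually exclusive, so proving in each regime that the embedding holds iff the corresponding condition holds establishes the biconditional. The Kronecker-delta test, the generalized H\"older inequality (which indeed holds with constant $1$ in the quasi-Banach range, via the substitution $P=q_1/q_2$, $P'$ its conjugate), the annular-indicator test giving $\|f_N\|_{l^{q_j}_{s_j}}\sim N^{n/q_j+s_j}$, and the logarithmic perturbation to rule out the endpoint equality are all carried out correctly; the endpoint cases $q_1=\infty$ or $q_2=\infty$ are handled by the same formulas with the usual $\sup$ modifications.
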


Now, we turn to the definition of modulation and Wiener amalgam space.
Fixed a nonzero function $\phi\in \mathscr{S}$, the short-time Fourier
transform of $f\in \mathscr{S}'$ with respect to the window $\phi$ is given by
\begin{equation}
V_{\phi}f(x,\xi)=\langle f,M_{\xi}T_x\phi\rangle=\int_{\mathbb{R}^n}f(y)\overline{\phi(y-x)}e^{-2\pi iy\cdot \xi}dy,
\end{equation}
where the translation operator is defined as $T_{x_0}f(x):=f(x-x_0)$ and
the modulation operator is defined as $M_{\xi}f(x):=e^{2\pi i\xi \cdot x}f(x)$, for $x$, $x_0$, $\xi\in\mathbb{R}^n$.

\begin{definition}\label{Definition, modulation space, continuous form}
Let $0<p, q\leqslant \infty$, $s\in \mathbb{R}$.
Given a window function $\phi\in \mathscr{S}\backslash\{0\}$, the modulation space $\mathcal {M}^{p,q}_s$ consists
of all $f\in \mathscr{S}'(\mathbb{R}^n)$ such that the norm
\begin{equation}
\begin{split}
\|f\|_{\mathcal {M}^{p,q}_s}&=\big\|\|V_{\phi}f(x,\xi)\|_{L^p_{x}}\big\|_{L^q_{\xi,s}}
\\&
=\left(\int_{\mathbb{R}^n}\left(\int_{\mathbb{R}^n}|V_{\phi}f(x,\xi)|^{p} dx\right)^{{q}/{p}}\langle\xi\rangle^{sq}d\xi\right)^{{1}/{q}}
\end{split}
\end{equation}
is finite, with the usual modification when $p=\infty$ or $q=\infty$.
In addition, we write $\mathcal {M}^{p,q}:=\mathcal {M}^{p,q}_0$ for short.
\end{definition}

The above definition of \ $\mathcal {M}^{p,q}_s$ \  is independent of the choice of window function $\phi$.
One can see this fact in \cite{GrochenigBook2013} \ for the case $(p,q)\in\lbrack 1,\infty ]^{2}$,
and in \cite{GalperinSamarah2004ACHA} for the case $(p,q)\in (0,\infty ]^{2}\backslash\lbrack 1,\infty]^{2}$.
More properties of modulation spaces can be founded in \cite{Wang2011Book}.
One can also see \cite{Feichtinger2006} for a survey of modulation spaces.

Applying the frequency-uniform localization techniques, one can give an alternative definition of modulation spaces (see \cite{Triebel1983ZFA, WangHudzik2007JDE} for details).

Denote by $Q_{k}$ the unit cube with the center at $k$. Then the family $\{Q_{k}\}_{k\in\mathbb{Z}^{n}}$
constitutes a decomposition of $\mathbb{R}^{n}$.
Let $\rho \in \mathscr {S}(\mathbb{R}^{n}),$
$\rho: \mathbb{R}^{n} \rightarrow [0,1]$ be a smooth function satisfying that $\rho(\xi)=1$ for
$|\xi|_{\infty}\leq {1}/{2}$ and $\rho(\xi)=0$ for $|\xi|_{\fy}\geq 3/4$. Let
\begin{equation}
\rho_{k}(\xi)=\rho(\xi-k),  k\in \mathbb{Z}^{n}
\end{equation}
be a translation of \ $\rho$.
Since $\rho_{k}(\xi)=1$ in $Q_{k}$, we have that $\sum_{k\in\mathbb{Z}^{n}}\rho_{k}(\xi)\geq1$
for all $\xi\in\mathbb{R}^{n}$. Denote
\begin{equation}
\sigma_{k}(\xi):=\rho_{k}(\xi)\left(\sum_{l\in\mathbb{Z}^{n}}\rho_{l}(\xi)\right)^{-1},  ~~~~ k\in\mathbb{Z}^{n}.
\end{equation}
It is easy to know that $\{\sigma_{k}(\xi)\}_{k\in\mathbb{Z}^{n}}$
constitutes a smooth decomposition of $\mathbb{R}^{n}$, and $%
\sigma_{k}(\xi)=\sigma(\xi-k)$.
The frequency-uniform decomposition
operators can be defined by
\begin{equation}
\Box_{k}:= \mathscr{F}^{-1}\sigma_{k}\mathscr{F}
\end{equation}
for $k\in \mathbb{Z}^{n}$.
Now, we give the (discrete) definition of modulation space $\mpqs$.

\begin{definition}
Let $s\in \mathbb{R}, 0<p,q\leq \infty$. The modulation space $\mpqs$ consists of all $f\in \mathscr{S}'$ such that the (quasi-)norm
\begin{equation}
\|f\|_{\mpqs}:=\left( \sum_{k\in \mathbb{Z}^{n}}\langle k\rangle ^{sq}\|\Box_k f\|_{p}^{q}\right)^{1/q}
\end{equation}
is finite. We write $\mpq:=M^{p,q}_0$ for short.
\end{definition}

\begin{remark}
The above definition is independent of the choice of $\sigma$.
So, we can choose suitable $\sigma$ fitting our case.
In the definition above, the function sequence  $\{\sigma_k(\xi)\}_{k\in \mathbb{Z}^n}$ satisfies
$\sigma_k(\xi)=1$ and $\sigma_k(\xi)\sigma_l(\xi)=0$ when $|\xi-k|_{\infty}\leq 1/4$ and $k\neq l$.
We also recall that the definition of $\mathcal {M}^{p,q}_s$ and $M^{p,q}_s$ are equivalent.
In this paper, we mainly use $M^{p,q}_s$ to denote the modulation space.
\end{remark}

\begin{definition}\label{Definition, Wiener amalgam space, continuous form}
Let $0<p, q\leqslant \infty$, $s\in \mathbb{R}$.
Given a window function $\phi\in \mathscr{S}\backslash\{0\}$, the Wiener amalgam space $W^{p,q}_s$ consists
of all $f\in \mathscr{S}'(\mathbb{R}^n)$ such that the norm
\begin{equation}
\begin{split}
\|f\|_{W^{p,q}_{s}}&=\big\|\|V_{\phi}f(x,\xi)\|_{L^q_{\xi, s}}\big\|_{L^p_{x}}
\\&
=\left(\int_{\mathbb{R}^n}\left(\int_{\mathbb{R}^n}|V_{\phi}f(x,\xi)|^{q}\langle \xi\rangle^{sq}d\xi\right)^{{p}/{q}}dx\right)^{{1}/{p}}
\end{split}
\end{equation}
is finite, with the usual modifications when $p=\infty$ or $q=\infty$.
\end{definition}

\begin{lemma}[Equivalent norm of $\wpqs$, \cite{GuoWuYangZhao2017JFA}]\label{lemma, equi norm of Wiener}
  Let $0<p, q\leqslant \infty$, $s\in \mathbb{R}$. Then
  \be
  \|f\|_{\wpqs}\sim
  \left(\sum_{k\in \zn}\|\s_kf\|^p_{W^{p,q}_s}\right)^{1/p}
  \sim
  \left(\sum_{k\in \zn}\|\s_kf\|^p_{\scrF L^q_s}\right)^{1/p}
  \ee
  with usual modification when $p=\fy$.

\end{lemma}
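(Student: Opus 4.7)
I would start from the STFT-based definition of $\wpqs$ and choose the specific window $\phi=\sigma$ (permissible by the window-independence result of \cite{GalperinSamarah2004ACHA}; recall that the bump $\sigma$ is Schwartz and real-valued). Since $V_\sigma f(x,\xi)=\widehat{f\cdot T_x\sigma}(\xi)$, this gives
\[
\|f\|_{\wpqs}^p=\int_{\rn}\|f\cdot T_x\sigma\|_{\scrF L^q_s}^p\,dx.
\]
Viewing $\sigma_k(x):=\sigma(x-k)=T_k\sigma$ as a function on the spatial side (so that the $\sigma_k f$ appearing in the lemma denotes the spatial localization of $f$ near $k$), the family $\{\sigma_k\}_{k\in\zn}$ is a smooth partition of unity on $\rn$ with uniformly bounded overlap, exactly mirroring the frequency-side partition used to define $\Box_k$.

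For the rightmost equivalence, I would partition the outer integral as $\int_{\rn}\,dx=\sum_{k\in\zn}\int_{Q_k}\,dx$ and use that for $x\in Q_k$ the window $T_x\sigma$ is a unit-scale translate of $\sigma_k$. By translation-invariance on the Fourier side combined with the finite overlap of the $\sigma_{k'}$, one obtains $\|f\cdot T_x\sigma\|_{\scrF L^q_s}\sim\|\sigma_k f\|_{\scrF L^q_s}$ for $x\in Q_k$, with constants depending only on $\sigma$, $s$, and $q$. Summation in $\ell^p$ then yields $\|f\|_{\wpqs}\sim\bigl(\sum_{k\in\zn}\|\sigma_k f\|_{\scrF L^q_s}^p\bigr)^{1/p}$.

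For the middle equivalence it suffices to show $\|\sigma_k f\|_{\wpqs}\sim\|\sigma_k f\|_{\scrF L^q_s}$ uniformly in $k$. Set $g:=\sigma_k f$; since $g$ has compact spatial support in $\{|y-k|_\infty\leq 3/4\}$, for any Schwartz window $\phi$ the quantity $\|g\cdot T_x\bar\phi\|_{\scrF L^q_s}$ decays like $\langle x-k\rangle^{-N}$ (any $N$) as $|x-k|\to\infty$, because $T_x\bar\phi$ is polynomially small on $\operatorname{supp}(g)$ when $|x-k|$ is large. Integration in $x$ then gives the upper bound $\|g\|_{\wpqs}\lesssim\|g\|_{\scrF L^q_s}$. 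The reverse inequality follows by choosing a Schwartz window with $\phi\equiv 1$ on a large ball, so that $g\cdot T_k\bar\phi=g$ and hence $\|g\cdot T_k\bar\phi\|_{\scrF L^q_s}=\|g\|_{\scrF L^q_s}$; continuity of $x\mapsto\|g\cdot T_x\bar\phi\|_{\scrF L^q_s}$ then propagates this bound to a fixed-size neighborhood of $k$, yielding the matching lower bound.

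The main technical obstacle is the full quasi-Banach range $0<p,q\leq\infty$. The continuous-to-discrete passage and the near-orthogonality used above ordinarily rely on Minkowski-type inequalities, which fail when $\min(p,q)<1$; in that range one must instead use the $r$-triangle inequality ($r=\min(p,q,1)$) together with Peetre-type pointwise-to-averaged maximal estimates for the functions $x\mapsto\|\sigma_k f\cdot T_x\bar\phi\|_{\scrF L^q_s}$. The endpoint cases $p=\infty$ or $q=\infty$ require the usual essential-supremum modifications but proceed along the same lines.
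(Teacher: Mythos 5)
The paper does not actually prove Lemma~\ref{lemma, equi norm of Wiener}; it is cited to \cite{GuoWuYangZhao2017JFA}, so there is no internal proof to match against. Taken on its own merits, your general architecture (use a compactly supported window, discretize the outer $x$-integral over unit cubes, and reduce to $\|\sigma_k f\|_{W^{p,q}_s}\sim\|\sigma_k f\|_{\scrF L^q_s}$ using compact support and rapid window decay) is the right one. Your flagging of the quasi-Banach issues and the $r$-triangle inequality is also correct and necessary.

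However, the central step of the rightmost equivalence as you state it is wrong. The pointwise two-sided bound $\|f\cdot T_x\sigma\|_{\scrF L^q_s}\sim\|\sigma_k f\|_{\scrF L^q_s}$ \emph{uniformly for all $x\in Q_k$} fails in the $\gtrsim$ direction even for $p=q=2$: if $x$ sits near a corner of $Q_k$, then $T_x\sigma$ vanishes on a nontrivial chunk of $\operatorname{supp}\sigma_k$, and one can arrange $\sigma_k f$ to concentrate there. Moreover, ``translation-invariance on the Fourier side'' does not compare $f\cdot T_x\sigma$ with $f\cdot\sigma_k$ --- translating the window does not translate the product, so $\|T_a g\|_{\scrF L^q_s}=\|g\|_{\scrF L^q_s}$ is not the relevant invariance here. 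What one should prove instead are \emph{one-sided} pointwise bounds, for different windows: using the finite overlap and a multiplier estimate (Young's inequality for $q\geq1$, Plancherel--Polya/Nikol'skii for $q<1$ --- available precisely because the factors are compactly supported), $\|f\cdot T_x\sigma\|_{\scrF L^q_s}^{r}\lesssim\sum_{|l-k|\leq C}\|\sigma_l f\|_{\scrF L^q_s}^{r}$ for $x\in Q_k$; and conversely, using window independence to switch to a window $\psi$ with $\psi\equiv1$ on a large ball, $\|\sigma_k f\|_{\scrF L^q_s}=\|\sigma_k(f\cdot T_x\psi)\|_{\scrF L^q_s}\lesssim\|f\cdot T_x\psi\|_{\scrF L^q_s}$ for $x\in Q_k$. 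Integrating over $Q_k$ and summing in $k$ then gives both inequalities. This same idea also repairs your middle equivalence: the invocation of ``continuity'' of $x\mapsto\|g\cdot T_x\bar\phi\|$ gives a neighborhood whose size depends on $g$, hence is not uniform; instead just take $\phi\equiv1$ on a ball of radius $\geq 7/4$ so that $g\cdot T_x\bar\phi=g$ for \emph{all} $x$ in a unit ball around $k$, which yields a uniform lower bound directly.
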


By the definition of modulation and Wiener amalgam spaces and the fact $|V_{\phi }f(x,\xi )|=|V_{\hat{\phi}}\hat{f}(\xi ,-x)|$,
we immediately have
\be
M^{p,p}_s=W^{p,p}_s,\ \ \ \scrF M^{p,q}=W^{q,p}, \ \ \scrF W^{p,q}=M^{q,p}.
\ee

Following we collect some useful properties on Wiener amalgam spaces.
The first one is the convolution and product relations.
We put the proof in Appendix A.
\begin{lemma}[Convolution relations]\label{lemma, conv of Wfy}
Let $p,q\in (0,\fy]$, $s> n$, $\d\in \mathbb{R}$, we have
\bn
\item $W^{p,q}_{\d}\ast W^{\dot{p}\wedge \dot{q},\fy}_{-\d}\subset W^{p,q}$;
\item $W^{\fy,\fy}_{s}\cdot W^{\fy,\fy}_{s}\subset W^{\fy,\fy}_{s}$.
\en
\end{lemma}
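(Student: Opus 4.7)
The plan is to work with the discrete equivalent norm from Lemma \ref{lemma, equi norm of Wiener}, namely $\|h\|_{W^{p,q}_s}^p\sim\sum_{k\in\bbZ^n}\|\sigma_k h\|_{\scrF L^q_s}^p$ (with the usual modification when $p=\fy$), which reduces both assertions to combining a local Fourier-side estimate with a global $l^p$-summation. For part (1) I would decompose $f=\sum_j\sigma_j f$ and $g=\sum_l\sigma_l g$ and exploit the geometric support fact that $(\sigma_j f)\ast(\sigma_l g)$ is contained in a ball $B(j+l,C)$, so $\sigma_k[(\sigma_j f)\ast(\sigma_l g)]$ vanishes unless $|j+l-k|\leq C$. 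Writing
$$
\widehat{\sigma_k[(\sigma_j f)\ast(\sigma_l g)]}=\hat{\sigma}_k\ast\bigl(\widehat{\sigma_j f}\cdot\widehat{\sigma_l g}\bigr),
$$
the smoothing factor $\hat{\sigma}_k$ is peeled off by Young's inequality (when $q\geq1$) or its quasi-Banach analogue for compactly supported functions (when $q<1$), and the weight is separated by H\"{o}lder:
$$
\bigl\|\widehat{\sigma_j f}\cdot\widehat{\sigma_l g}\bigr\|_{L^q}\leq\|\sigma_j f\|_{\scrF L^q_\d}\,\|\sigma_l g\|_{\scrF L^\fy_{-\d}}.
$$
A discrete Young-type convolution inequality in the index $k$, with the second factor bounded in $l^{\dot p\wedge\dot q}$ over $l$, then closes the estimate.

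For part (2) the argument runs along the same lines but is easier because multiplication is pointwise. The decomposition $\sigma_k(fg)=\sigma_k f\cdot g=\sum_{|l-k|\leq C}(\sigma_k f)(\sigma_l g)$ collapses the sum to $O(1)$ indices $l$ thanks to the nearly disjoint supports of the $\sigma_l$'s. Passing to the Fourier side, $\widehat{(\sigma_k f)(\sigma_l g)}=\widehat{\sigma_k f}\ast\widehat{\sigma_l g}$, the required algebra property $L^\fy_s\ast L^\fy_s\hookrightarrow L^\fy_s$ for $s>n$ follows from Peetre's inequality $\lan\xi\ran^s\lesssim\lan\eta\ran^s+\lan\xi-\eta\ran^s$ together with the embedding $L^\fy_s\hookrightarrow L^1$, which holds precisely when $s>n$. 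Taking $\sup_k$ yields the conclusion.

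The main obstacle I anticipate is the quasi-Banach regime $p,q<1$ in part (1): the ordinary triangle inequality fails both in $l^p$ and in $L^q$, and $\|\hat{\sigma}_k\ast H\|_{L^q}\lesssim\|H\|_{L^q}$ is no longer covered by classical Young. The remedy is to exploit that every localized factor has compact physical-space support of diameter $\sim 1$, which licenses Bernstein/Nikolskii-type improvements (trading $L^q$ for $L^{\max(1,q)}$ on compactly supported functions at the cost of an absolute constant), and to replace the standard triangle inequalities by their $\dot p$- and $\dot q$-versions. The appearance of $\dot p\wedge\dot q$ in the exponent on the factor $g$ is precisely tuned so that the resulting discrete Young-type inequality $l^p\ast l^{\dot p\wedge\dot q}\hookrightarrow l^p$ just barely closes.
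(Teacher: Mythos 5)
Your proposal is correct and is, modulo the Fourier transform identity $\scrF W^{p,q}=M^{q,p}$, the paper's argument read on the other side of the transform. The paper first strips the weights by replacing $f,g$ with $\lan D\ran^{\delta}f,\lan D\ran^{-\delta}g$, then converts the convolution estimate into the product estimate $M^{q,p}\cdot M^{\fy,\dot{p}\wedge\dot{q}}\subset M^{q,p}$; this is established in Lemma~\ref{lemma, product on modulation} via the almost--orthogonal $\Box_k$ decomposition, the $L^p$-multiplier property of $\Box_k$ on band-limited pieces (which is what handles $p<1$), H\"older in $L^p\cdot L^{\fy}$, and the discrete Young inequality $l^{q/\dot{p}}\ast l^{(\dot{p}\wedge\dot{q})/\dot{p}}\subset l^{q/\dot{p}}$ of Lemma~\ref{lemma, conv under 1}. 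Your time-side $\sigma_k$ decomposition is the exact Fourier mirror of this, and the discrete inequality you need, $l^{p/\dot{q}}\ast l^{(\dot{p}\wedge\dot{q})/\dot{q}}\subset l^{p/\dot{q}}$ (after raising the pointwise estimate to the $\dot{q}$-th power), is Lemma~\ref{lemma, conv under 1} with $p$ and $q$ swapped, so the indices close for the same arithmetic reason. One technical caveat: the quasi-Banach obstacle you correctly identify is not handled by ``trading $L^q$ for $L^{\max(1,q)}$ on compactly supported functions'' --- for $h$ supported in a fixed ball, Nikolskii gives $\|\hat h\|_{L^1}\lesssim\|\hat h\|_{L^q}$ when $q\leq 1$, which points the wrong way. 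The correct mechanism is the Peetre maximal-function (equivalently Plancherel--P\'olya) argument, which shows that for $H=\hat h$ with $h$ supported in a ball of fixed radius and $\Phi\in\mathscr S$ one has $\|\Phi\ast H\|_{L^q}\lesssim\|H\|_{L^q}$ for all $0<q\leq\fy$; under Fourier transform this is precisely the $L^p$-multiplier fact the paper invokes for $\Box_k$. Finally, your direct proof of part~(2) via Peetre's inequality $\lan\xi\ran^{s}\lesssim\lan\xi-\eta\ran^{s}\lan\eta\ran^{s}$ and $L^{\fy}_s\hookrightarrow L^1$ for $s>n$ is a nice self-contained replacement for the paper's bare citation of \cite{GuoChenFanZhao2018MJM}.
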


\begin{lemma}[Dilation property of $W^{\fy,\fy}_{s}$, \cite{WangHan2014JMSP} Theorem 3.3]\label{lemma, dila of Wfy}
Let $p\in (0,\fy]$, $s> n$  we have
\be
\|f_t\|_{W^{\fy,\fy}_{s}}\lesssim \|f\|_{W^{\fy,\fy}_{s}},
\ee
 where $f_t(x):=f(tx)$, $t\in (0,1]$.
\end{lemma}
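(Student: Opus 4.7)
The plan is to switch to the modulation-space picture via the identity $W^{\infty,\infty}_s = M^{\infty,\infty}_s$ stated just above the lemma, so that
\begin{equation*}
\|f\|_{W^{\infty,\infty}_s}\sim \sup_{k\in\mathbb{Z}^n}\langle k\rangle^{s}\,\|\Box_k f\|_{L^\infty},
\end{equation*}
with the frequency-uniform decomposition operators $\Box_k=\mathcal{F}^{-1}\sigma_k\mathcal{F}$. A change of variables on the Fourier side gives
\begin{equation*}
\Box_k f_t(x) \;=\; \mathcal{F}^{-1}\bigl(\sigma_k(t\,\cdot)\,\hat f\bigr)(tx),
\end{equation*}
so $\|\Box_k f_t\|_{L^\infty}=\|\mathcal{F}^{-1}(\sigma_k(t\,\cdot)\hat f)\|_{L^\infty}$, and the problem is reduced to bounding the right-hand side uniformly in $t\in(0,1]$.

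First I would insert the partition $\hat f=\sum_{j}\sigma_j\hat f$ and observe that the product $\sigma_k(t\eta)\sigma_j(\eta)$ vanishes unless the two supports overlap, which forces $|tj-k|\lesssim 1$; for fixed $k$ this restricts $j$ to a ball of radius $\lesssim 1/t$ centred at $k/t$. Applying the trivial estimate $\|\mathcal{F}^{-1}(gh)\|_{L^\infty}\leq\|\mathcal{F}^{-1}g\|_{L^1}\|\mathcal{F}^{-1}h\|_{L^\infty}$ to each nonzero term, together with the scaling identity $\|\mathcal{F}^{-1}(\sigma_k(t\,\cdot))\|_{L^1}=\|\mathcal{F}^{-1}\sigma\|_{L^1}$ (independent of $k$ and $t$), I would arrive at
\begin{equation*}
\langle k\rangle^{s}\|\Box_k f_t\|_{L^\infty}\;\lesssim\;\|f\|_{W^{\infty,\infty}_s}\!\!\sum_{\substack{j\in\mathbb{Z}^n\\|tj-k|\lesssim 1}}\!\!\langle k\rangle^{s}\langle j\rangle^{-s}.
\end{equation*}

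The main obstacle is showing that this last discrete sum stays bounded uniformly in $k\in\mathbb{Z}^n$ and $t\in(0,1]$, and this is exactly where the hypothesis $s>n$ is needed. I would split on the size of $k$: for $|k|$ of bounded order the sum is dominated by $\sum_{j\in\mathbb{Z}^n}\langle j\rangle^{-s}$, which is finite precisely because $s>n$; for $|k|$ large every admissible $j$ satisfies $|j|\gtrsim|k|/t$, the cardinality of the summation is $\sim t^{-n}$, and each summand is $\lesssim(|k|/t)^{-s}$, so that
\begin{equation*}
\langle k\rangle^{s}\cdot t^{-n}\bigl(|k|/t\bigr)^{-s}\;\sim\; t^{s-n}\;\leq\;1
\end{equation*}
for $s>n$ and $t\leq 1$. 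Taking the supremum over $k$ then yields $\|f_t\|_{W^{\infty,\infty}_s}\lesssim\|f\|_{W^{\infty,\infty}_s}$, with constants independent of $t\in(0,1]$.
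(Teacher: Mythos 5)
Your argument is correct and follows essentially the same route as the paper's: both pass to the discrete $M^{\infty,\infty}_s$ norm, rewrite $\Box_k f_t$ in terms of $\sigma_k(t\,\cdot)\hat f$, insert the frequency partition of unity, restrict to the finitely many overlapping indices, apply the $L^1\ast L^\infty$ Young bound with the scale-invariant quantity $\|\mathcal{F}^{-1}(\sigma_k(t\,\cdot))\|_{L^1}$, and then split into the small-$|k|$ and large-$|k|$ regimes, using $s>n$ in each. The only cosmetic difference is that you carry the weight $\langle k\rangle^s\langle j\rangle^{-s}$ through the sum explicitly rather than invoking $|A_{k,t}|\sim t^{-n}$ and $\sup_{l\in A_{k,t}}\langle l\rangle^{-s}$ separately as the paper does.
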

\begin{proof}
  For the self-containing of this paper, we give a short proof here.
  Write
  \be
  \|f_t\|_{W^{\fy,\fy}_{s}}=\|f_t\|_{M^{\fy,\fy}_{s}}
  =\sup_{k\in \zn}\lan k\ran^{s}\|\Box_kf_t\|_{L^{\fy}}.
  \ee
  Denote by
  \be
  A_{k,t}:=\{l\in \zn: \s_l(\xi)\s_k(t\xi)\neq 0\}.
  \ee
  Then
  \be
  \begin{split}
    \|\Box_kf_t\|_{L^{\fy}}
    = &
    \|\frac{1}{t^n}\scrF^{-1}(\s_k(\xi)\hat{f}(\xi/t))\|_{L^{\fy}}
    \\
    = &
    \|\scrF^{-1}(\s_k(t\xi)\hat{f}(\xi))\|_{L^{\fy}}
    \\
    = &
    \|\scrF^{-1}(\sum_{l\in A_{k,t}}\s_l(\xi)\s_k(t\xi)\hat{f}(\xi))\|_{L^{\fy}}
\end{split}
\ee
For $k$ near the origin, we have
\be
\begin{split}
   \lan k\ran^{s}\|\Box_k f_t\|_{L^{\fy}}
   \sim &
   \|\Box_k f_t\|_{L^{\fy}}
   \\
    \lesssim &
    \sum_{l\in A_{k,t}}\|\scrF^{-1}(\s_l(\xi)\hat{f}(\xi))\|_{L^{\fy}}\cdot \|\scrF^{-1}(\s_k(t\cdot))\|_{L^1}
    \\
    \lesssim &
    \sum_{l\in \zn}\lan l\ran^{-s}\lan l\ran^{s}\|\Box_lf\|_{L^{\fy}}
    \lesssim
    \sum_{l\in \zn}\lan l\ran^{-s}\|f\|_{W^{\fy,\fy}_{s}}\lesssim \|f\|_{W^{\fy,\fy}_{s}}.
  \end{split}
  \ee
For $k$ far away for the origin, observe that $|A_{k,t}|\sim t^{-n}$,
and
$|l| \sim |k/t|$ for $l\in A_{k,t}$.
We have
\be
\begin{split}
   \|\Box_k f_t\|_{L^{\fy}}
    \lesssim &
    |A_{k,t}|\sup_{l\in A_{k,t}}\|\scrF^{-1}(\s_l(\xi)f(\xi))\|_{L^{\fy}}
    \\
    \lesssim &
    t^{-n}\sup_{l\in A_{k,t}}\lan l\ran^{-s}\lan l\ran^{s}\|\Box_lf\|_{L^{\fy}}
    \\
    \lesssim &
    t^{-n}\lan k/t\ran^{-s}\|f\|_{W^{\fy,\fy}_{s}}
    \lesssim \lan k\ran^{-s}\|f\|_{W^{\fy,\fy}_{s}}.
  \end{split}
  \ee
The above two estimates yield that
\be
\begin{split}
  \|f_t\|_{W^{\fy,\fy}_{s}}
  \sim
  \sup_{k\in \zn}\lan k\ran^{s}\|\Box_kf_t\|_{L^{\fy}}
  \lesssim
  \|f\|_{W^{\fy,\fy}_{s}}.
\end{split}
\ee
\end{proof}

\begin{lemma}[Rotation free]\label{lemma, rotation free}
  Let $p,q\in (0,\fy]$, $s\in \mathbb{R}$, $P$ be an orthogonal matrix.
  Denote by $f_P(x):=f(P^{-1}x)$. We have
  \be
  \|f_P\|_{W^{p,q}_s}\sim\|f\|_{W^{p,q}_s}.
  \ee
\end{lemma}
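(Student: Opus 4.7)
The plan is to reduce the claim to a short--time Fourier transform change of variables, using Definition 2.6/2.8 and the window--independence of the $W^{p,q}_s$--norm.

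First, fix a nonzero window $\phi\in\mathscr{S}$. A direct computation of
\[
V_{\phi}(f_P)(x,\xi)=\int_{\rn} f(P^{-1}y)\,\overline{\phi(y-x)}\, e^{-2\pi i y\cdot \xi}\,dy,
\]
followed by the substitution $y\mapsto Pz$ (unit Jacobian since $|\det P|=1$) and the orthogonality relation $P^{T}=P^{-1}$, yields the key identity
\[
V_{\phi}(f_P)(x,\xi) = V_{\phi_{P^{-1}}}f\bigl(P^{-1}x,\,P^{-1}\xi\bigr),
\]
where $\phi_{P^{-1}}(y):=\phi(Py)$ is again a nonzero Schwartz function.

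Plug this identity into Definition 2.8. Since the $W^{p,q}_s$--norm is independent of the window up to equivalence of (quasi--)norms for every $0<p,q\le\infty$ (the same argument as that cited after Definition 2.6 via Gr\"ochenig and Galperin--Samarah works for the mixed norm in the opposite order), we obtain
\[
\|f_P\|_{W^{p,q}_s}\;\sim\;\Bigl\|\,\bigl\|V_{\phi_{P^{-1}}}f(P^{-1}x,P^{-1}\xi)\langle\xi\rangle^{s}\bigr\|_{L^{q}_{\xi}}\,\Bigr\|_{L^{p}_{x}}.
\]
Apply the orthogonal change of variables $\xi\mapsto P\xi$ in the inner integral and $x\mapsto Px$ in the outer integral; both have unit Jacobian, and $|P\xi|=|\xi|$ gives $\langle P\xi\rangle=\langle\xi\rangle$, so the Japanese--bracket weight is preserved. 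The right--hand side collapses to the STFT expression for $\|f\|_{W^{p,q}_s}$ computed with window $\phi_{P^{-1}}$, which is in turn $\sim\|f\|_{W^{p,q}_s}$ by window--independence once more.

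The computation is essentially a one--line substitution once the STFT identity is in hand, so no serious technical obstacle appears. The only delicate point is that window--independence is used twice, and in the quasi--Banach regime $p,q<1$ one must invoke the Galperin--Samarah type result rather than the classical Gr\"ochenig version; both are already referenced in the paper. An alternative route through the discrete characterisation in Lemma \ref{lemma, equi norm of Wiener} would be less natural, since the unit cubes $Q_k$ of the frequency--uniform decomposition are not invariant under $P$, forcing an unnecessary covering argument.
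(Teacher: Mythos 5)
Your proof is correct, and the core mechanism — an orthogonal change of variables in the short-time Fourier transform, combined with the rotation invariance of $\langle\cdot\rangle^s$ and of Lebesgue measure — is the same one the paper uses. The one real difference is in the handling of the window. The paper fixes $\phi$ radial and real at the outset (so $\widehat\phi$ is radial as well), which makes the rotation get absorbed by the window and yields the exact identity $V_\phi f_P(x,\xi) = V_\phi f(P^{-1}x,P^{-1}\xi)$ with the \emph{same} window; window-independence is then only invoked implicitly, through the freedom to pick that convenient $\phi$. You keep $\phi$ arbitrary, land on the rotated window $\phi_{P^{-1}}(y)=\phi(Py)$, and pay for the generality with an explicit appeal to window-independence. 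Both routes are valid; the paper's is marginally shorter, yours makes the role of window-independence transparent. One point worth tightening: the Gr\"ochenig and Galperin--Samarah references are cited in the paper for window-independence of the modulation (quasi-)norm $\mathcal M^{p,q}_s$, not directly for $W^{p,q}_s$; the statement does transfer (the same Gabor/reproducing-formula argument applies with the mixed norm taken in the opposite order, or one can pass through $|V_\phi f(x,\xi)|=|V_{\hat\phi}\hat f(\xi,-x)|$ and land in a weighted modulation space), but it is cleaner to say this explicitly rather than treat those as literal citations for the Wiener amalgam case.
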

\begin{proof}
Without loss of generality, we assume that $\phi$ is a radial real function.
  By a direct calculation we get
  \be
  \begin{split}
    V_{\phi}f_P(x,\xi)
    = &
    \int_{\rn}\widehat{f}(P^{-1}\eta)\widehat{\phi}(\eta-\xi)e^{2\pi ix\eta}d\eta
    \\
    = &
    \int_{\rn}\widehat{f}(\eta)\widehat{\phi}(P\eta-\xi)e^{2\pi ix\cdot P\eta}d\eta
    \\
    = &
    \int_{\rn}\widehat{f}(\eta)\widehat{\phi}(\eta-P^{-1}\xi)e^{2\pi iP^{-1}x\cdot \eta}d\eta
    \\
    = &
    V_{\phi}f(P^{-1}x,P^{-1}\xi).
  \end{split}
  \ee
  It follows that
  \be
  \begin{split}
    \|f_P\|_{\wpqs}
    = &
    \left\|\left\|V_{\phi}f_P(x,\xi)\right\|_{L_{\xi,s}^{q}}\right\|_{L^p_x}
    \\
    = &
    \left\|\left\|V_{\phi}f(P^{-1}x,P^{-1}\xi)\right\|_{L_{\xi,s}^{q}}\right\|_{L^p_x}
        \\
    = &
    \left\|\left\|V_{\phi}f(x,\xi)\right\|_{L_{\xi,s}^{q}}\right\|_{L^p_x}
    \sim
    \|f\|_{\wpqs}.
  \end{split}
  \ee
\end{proof}

Finally, we recall a disjoint property for the $\al$-covering, which will be used in the proof Lemma \ref{lemma, scattered}.
\begin{lemma}[See Lemma A.1 in \cite{GuoFanZhao2018SCIChina}]\label{lemma, position}
  Let $\alpha\in [0,1)$ be arbitrary. Then there is some $\delta>0$ such that the family
  \begin{equation*}
    \{B_k^{\delta}\}_{k\in \mathbb{Z}^n}:= \left\{B(\langle k\rangle^{\frac{\alpha}{1-\alpha}}k, \delta \langle k\rangle^{\frac{\alpha}{1-\alpha}})\right\}_{k\in \mathbb{Z}^n}
  \end{equation*}
  is pairwise disjoint.
\end{lemma}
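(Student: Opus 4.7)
Set $\beta := \alpha/(1-\alpha) \in [0,\fy)$ and $\Phi(x) := \lan x\ran^\beta x$; the conclusion is equivalent to producing $\delta > 0$ with
\be
|\Phi(j) - \Phi(k)| > \delta\bigl(\lan j\ran^\beta + \lan k\ran^\beta\bigr),\qquad j\neq k \in \zn.
\ee
Taking without loss of generality $\lan j\ran \geq \lan k\ran$ so that the right-hand side is at most $2\delta\lan j\ran^\beta$, it suffices to produce a uniform lower bound $|\Phi(j) - \Phi(k)| \geq c\lan j\ran^\beta$, after which any $\delta < c/2$ works. The case $\alpha = 0$ reduces to $|j-k|\geq 1$ and is trivial, so I henceforth assume $\beta > 0$.

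The main step is to split into two regimes. In the \emph{separated} regime $\lan j\ran \geq 2\lan k\ran$, the reverse triangle inequality gives $|\Phi(j)| - |\Phi(k)| = \lan j\ran^\beta|j| - \lan k\ran^\beta|k|$, and using $|j| \geq \lan j\ran/\sqrt{2}$ for $|j|\geq 1$ together with $\lan k\ran^\beta|k| \leq (\lan j\ran/2)^{\beta+1}$, this quantity is $\gtrsim \lan j\ran^{\beta+1} \geq \lan j\ran^\beta$. In the \emph{comparable} regime $\lan k\ran \leq \lan j\ran \leq 2\lan k\ran$, I would exploit that the Jacobian
\be
D\Phi(x) = \lan x\ran^\beta I + \beta\lan x\ran^{\beta-2}\,xx^T
\ee
is positive semi-definite with smallest eigenvalue $\lan x\ran^\beta$. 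Integrating $\Phi(j)-\Phi(k) = \int_0^1 D\Phi(\gamma(t))(j-k)\,dt$ along $\gamma(t) := tj+(1-t)k$ and pairing with the unit vector $(j-k)/|j-k|$ yields
\be
|\Phi(j)-\Phi(k)| \geq |j-k|\int_0^1 \lan \gamma(t)\ran^\beta\,dt.
\ee
A short triangle-inequality computation, using $|j| \geq |k|$ and $|j| \leq 3|k|$ (from $\lan j\ran \leq 2\lan k\ran$ for $|k|$ large), gives $|\gamma(t)| \geq |k|/2$ on the end intervals $[0,\tfrac18]\cup[\tfrac78,1]$; the integral is thus $\gtrsim \lan j\ran^\beta$ once $|k|$ exceeds a fixed threshold, and combining with $|j-k|\geq 1$ settles this regime.

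Only finitely many lattice pairs have both norms below that threshold, and on this finite set $\Phi$ is a smooth global diffeomorphism of $\rn$ (for $\beta \geq 0$), hence injective, so $|\Phi(j)-\Phi(k)|$ attains a positive minimum over distinct pairs, which can be absorbed by shrinking $\delta$ further. The principal obstacle I foresee is the near-antipodal subcase of the comparable regime, where the segment $\gamma$ can pass arbitrarily close to the origin and make the pointwise factor $\lan\gamma(t)\ran^\beta$ small there; restricting the integration to the two end intervals $[0,\tfrac18]\cup[\tfrac78,1]$ is the essential device that keeps $|\gamma(t)|$ comparable to $|k|$ uniformly in the angular configuration of $k$ and $j$.
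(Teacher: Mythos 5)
The paper does not supply its own proof of this lemma: it is recalled by citation from Lemma~A.1 of \cite{GuoFanZhao2018SCIChina}, so there is no internal argument to compare against. Evaluating your proof on its own terms, the plan is sound and the key ideas are correct. The reduction to a uniform lower bound of the form $|\Phi(j)-\Phi(k)|\geq c\lan j\ran^{\beta}$ (with $\lan j\ran\geq\lan k\ran$ and $\delta<c/2$) is exactly right. In the separated regime $\lan j\ran\geq 2\lan k\ran$ the computation $|\Phi(j)|-|\Phi(k)|\geq \lan j\ran^{\beta+1}\bigl(1/\sqrt 2-2^{-(\beta+1)}\bigr)>0$ is valid for $\beta>0$, and $j\neq 0$ is automatic there. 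In the comparable regime, the Jacobian $D\Phi(x)=\lan x\ran^{\beta}I+\beta\lan x\ran^{\beta-2}xx^{T}$ is indeed symmetric with smallest eigenvalue $\lan x\ran^{\beta}$; pairing $\int_0^1 D\Phi(\gamma(t))(j-k)\,dt$ with the unit vector $(j-k)/|j-k|$ gives $|\Phi(j)-\Phi(k)|\geq|j-k|\int_0^1\lan\gamma(t)\ran^{\beta}dt$, and your device of restricting to $[0,\tfrac18]\cup[\tfrac78,1]$ cleanly handles the near-antipodal configurations.

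Two small points worth tightening when you write this up. First, the bound $|j|\leq 3|k|$ from $\lan j\ran\leq 2\lan k\ran$ actually holds for all $|k|\geq 1$ (since $|j|^{2}\leq 3+4|k|^{2}\leq 9|k|^{2}$), so the only genuinely exceptional pair in the comparable regime is $k=0$, for which $1\leq\lan j\ran\leq 2$ and the estimate $|\Phi(j)-\Phi(0)|=\lan j\ran^{\beta}|j|\geq 1\geq 2^{-\beta}\lan j\ran^{\beta}$ is immediate; you do not really need to invoke a diffeomorphism/compactness argument on a finite set, though that would also work. Second, replace the informal ``for $|k|$ large'' by an explicit threshold ($|k|\geq 1$ suffices), since the finitely-many-exceptions clause only applies once a concrete threshold is fixed. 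With those cosmetic repairs the proof is complete.
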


\section{boundedness results}
In this section, we establish some boundedness results of the uimodular Fourier multiplier on Wiener amalgam spaces,
including the proof of potential persistence case Theorem \ref{thm, wiener, potential persistence},
and the proof of potential loss case Theorem \ref{thm, wiener, potential loss} and \ref{thm, interpolation case}.
Then, by establishing an embedding Lemma \ref{lemma, derivative to wiener}
related to the pointwise derivative conditions,
we give the proof of Corollary \ref{coy, bd, exact conditions}.

\subsection{Potential persistence: low growth of $\mu$}
This subsection is devoted to the proof of Theorem \ref{thm, wiener, potential persistence}.
Thanks to the time localization property of working space $W^{\dot{p}\wedge \dot{q},\fy}$,
the information of $\mu$ and $\nabla\mu$ can be related locally.

\begin{proof}[Proof of Theorems \ref{thm, wiener, potential persistence}]
By the convolution relation (see Lemma \ref{lemma, conv of Wfy})
\be
W^{p,q}\ast W^{\dot{p}\wedge \dot{q},\fy}\subset W^{p,q},
\ee
we only need to verify that $\scrF^{-1}e^{i\mu}\in W^{\dot{p}\wedge \dot{q},\fy}$, or equivalently,
\ben\label{pf, 1,1}
e^{i\mu}\in M^{\fy,\dot{p}\wedge \dot{q}}.
\een
Here, by the assumption of $\nabla \mu\in (W^{\infty,\fy}_{n/(\dot{p}\wedge \dot{q})+\ep})^n$, we will show that
\be
e^{i\mu}\in W^{\fy,\fy}_{n/(\dot{p}\wedge \dot{q})+\ep}.
\ee
Then \eqref{pf, 1,1} follows by the fact $W^{\fy,\fy}_{n/(\dot{p}\wedge \dot{q})+\ep}=M^{\fy,\fy}_{n/(\dot{p}\wedge \dot{q})+\ep}\subset M^{\fy,\dot{p}\wedge \dot{q}}$.
By an equivalent norm of Wiener amalgam space (see Lemma \ref{lemma, equi norm of Wiener}) we have
\be
\|e^{i\mu}\|_{W^{\fy,\fy}_{n/(\dot{p}\wedge \dot{q})+\ep}}\sim \sup_{k\in \bbZ^n}\|\s_ke^{i\mu}\|_{W^{\fy,\fy}_{n/(\dot{p}\wedge \dot{q})+\ep}}.
\ee
The Taylor's formula yields that
\be
\mu(k+\xi)=\mu(k)+\int_0^1 \xi\cdot\nabla \mu(k+t\xi)dt=:\mu(k)+R_k.
\ee
Then,
\be
\|\s_ke^{i\mu}\|_{W^{\fy,\fy}_{n/(\dot{p}\wedge \dot{q})+\ep}}=\|\s_0e^{i\mu(k+\cdot)}\|_{W^{\fy,\fy}_{n/(\dot{p}\wedge \dot{q})+\ep}}
=\|\s_0e^{iR_k}\|_{W^{\fy,\fy}_{n/(\dot{p}\wedge \dot{q})+\ep}}=\|\s_0e^{i\s_0^*R_k}\|_{W^{\fy,\fy}_{n/(\dot{p}\wedge \dot{q})+\ep}},
\ee
where $\s_k^{\ast}:=\sum_{\s_l\s_k\neq 0}\s_l.$
By the algebraic property $W^{\fy,\fy}_{n/(\dot{p}\wedge \dot{q})+\ep}\cdot W^{\fy,\fy}_{n/(\dot{p}\wedge \dot{q})+\ep}\subset W^{\fy,\fy}_{n/(\dot{p}\wedge \dot{q})+\ep}$ (see Lemma \ref{lemma, conv of Wfy}),
there exists a constant $C\geq 1$ such that
\be
\|f\cdot f\|_{W^{\fy,\fy}_{n/(\dot{p}\wedge \dot{q})+\ep}}\leq C \|f\|_{W^{\fy,\fy}_{n/(\dot{p}\wedge \dot{q})+\ep}}^2.
\ee
This implies that
\be
\begin{split}
  &\|\s_0e^{i\s_0^*R_k}\|_{W^{\fy,\fy}_{n/(\dot{p}\wedge \dot{q})+\ep}}
  =\|\s_0+\s_0(e^{i\s_0^*R_k}-1)\|_{W^{\fy,\fy}_{n/(\dot{p}\wedge \dot{q})+\ep}}
  \\
  \lesssim &
  1+\|\s_0\|_{W^{\fy,\fy}_{n/(\dot{p}\wedge \dot{q})+\ep}}\|e^{i\s_0^*R_k}-1\|_{W^{\fy,\fy}_{n/(\dot{p}\wedge \dot{q})+\ep}}
  \\
  \lesssim &
  1+\|e^{i\s_0^*R_k}-1\|_{W^{\fy,\fy}_{n/(\dot{p}\wedge \dot{q})+\ep}}
  =
  1+\left\|\sum_{m=1}^{\fy}\frac{(i\s_0^*R_k)^m}{m!}\right\|_{W^{\fy,\fy}_{n/(\dot{p}\wedge \dot{q})+\ep}}
  \\
  \leq &
  1+\sum_{m=1}^{\fy}\frac{\left\|(\s_0^*R_k)^m\right\|_{W^{\fy,\fy}_{n/(\dot{p}\wedge \dot{q})+\ep}}}{m!}
  \leq
  \sum_{m=0}^{\fy}\frac{C^m\left\|\s_0^*R_k\right\|_{W^{\fy,\fy}_{n/(\dot{p}\wedge \dot{q})+\ep}}^m}{m!}
  \leq
  \exp\left(C\left\|\s_0^*R_k\right\|_{W^{\fy,\fy}_{n/(\dot{p}\wedge \dot{q})+\ep}}\right).
\end{split}
\ee
Finally,
\be
\begin{split}
  \|\s_0^*R_k\|_{W^{\fy,\fy}_{n/(\dot{p}\wedge \dot{q})+\ep}}
  = &
  \left\|\s_0^*\int_0^1 \xi\cdot \nabla \mu(k+t\xi)dt\right\|_{W^{\fy,\fy}_{n/(\dot{p}\wedge \dot{q})+\ep}}
  \\
  \lesssim &
  \sum_{|\g|=1}\left\|\s_0^*\xi^{\g}\right\|_{W^{\fy,\fy}_{n/(\dot{p}\wedge \dot{q})+\ep}}\cdot
  \left\|\int_0^1 \partial^{\g} \mu(k+t\xi)dt\right\|_{W^{\fy,\fy}_{n/(\dot{p}\wedge \dot{q})+\ep}}
  \\
  \lesssim &
  \sum_{|\g|=1}
  \left\|\int_0^1 \partial^{\g} \mu(k+t\xi)dt\right\|_{W^{\fy,\fy}_{n/(\dot{p}\wedge \dot{q})+\ep}}
  \\
  \lesssim &
  \sum_{|\g|=1}
  \int_0^1 \left\|\partial^{\g} \mu(k+t\xi)\right\|_{W^{\fy,\fy}_{n/(\dot{p}\wedge \dot{q})+\ep}}dt
  \\
  =&
  \sum_{|\g|=1}\int_0^1 \left\|\partial^{\g} \mu(t\xi)\right\|_{W^{\fy,\fy}_{n/(\dot{p}\wedge \dot{q})+\ep}}dt
  \leq
  \sum_{|\g|=1}\left\|\partial^{\g} \mu\right\|_{W^{\fy,\fy}_{n/(\dot{p}\wedge \dot{q})+\ep}}£¬
\end{split}
\ee
where in the last inequality we use the dilation property in Lemma \ref{lemma, dila of Wfy}.

Combining the above estimates yields that
\be
\begin{split}
\|e^{i\mu}\|_{W^{\fy,\fy}_{n/(\dot{p}\wedge \dot{q})+\ep}}
\sim \sup_{k\in \bbZ^n}\|\s_ke^{i\mu}\|_{W^{\fy,\fy}_{n/(\dot{p}\wedge \dot{q})+\ep}}
\lesssim &
\sup_{k\in \bbZ^n}\exp\left(C\left\|\s_0^*R_k\right\|_{W^{\fy,\fy}_{n/(\dot{p}\wedge \dot{q})+\ep}}\right)
\\
\lesssim &
\exp\left(C\sum_{|\g|=1}\left\|\partial^{\g} \mu\right\|_{W^{\fy,\fy}_{n/(\dot{p}\wedge \dot{q})+\ep}}\right)\lesssim 1.
\end{split}
\ee
We have now completed this proof.
\end{proof}

\subsection{Potential loss: mild growth of $\mu$}
This subsection is devoted to the proof of Theorem \ref{thm, wiener, potential loss}.
In this case,
we apply some dilation argument to avoid the regularity loss on the exponent of $e$.

\begin{proof}[Proof of Theorems \ref{thm, wiener, potential loss}]
  Without loss of generality, we assume
  \ben\label{pf, 2,2}
  s\ep+sn/(\dot{p}\wedge \dot{q})-\d<0
  \een
  for fixed $s>0$ and $\d>sn/(\dot{p}\wedge \dot{q})$.
  Denote by
  \be
  P_{t}(x):=\langle x\rangle^{t},\ \ x\in \rn,\ t\in \bbR.
  \ee
  By the convolution relations $W^{p,q}_{\d}\ast W^{\dot{p}\wedge \dot{q},\fy}_{-\d}\subset W^{p,q}$, we only need to verify that
  \be
  \scrF^{-1}e^{i\mu}\in W^{\dot{p}\wedge \dot{q},\fy}_{-\d}.
  \ee
  Note that $W^{\fy,\fy}_{n/(\dot{p}\wedge \dot{q})+\ep}= M^{\fy,\fy}_{n/(\dot{p}\wedge \dot{q})+\ep}
  \subset M^{\fy,\dot{p}\wedge \dot{q}}$, if we have
  \ben\label{pf, 2,1}
  P_{-\d}e^{i\mu}\in W^{\fy,\fy}_{n/(\dot{p}\wedge \dot{q})+\ep},
  \een
  the desired conclusion follows by
  \be
  \begin{split}
  P_{-\d}e^{i\mu}\in W^{\fy,\fy}_{n/(\dot{p}\wedge \dot{q})+\ep}\Longrightarrow P_{-\d}e^{i\mu}\in M^{\fy,\dot{p}\wedge \dot{q}}
  \Longrightarrow & \scrF^{-1}(P_{-\d}e^{i\mu})\in W^{\dot{p}\wedge \dot{q},\fy}
  \\
  \Longrightarrow & \scrF^{-1}e^{i\mu}\in W^{\dot{p}\wedge \dot{q},\fy}_{-\d}.
  \end{split}
  \ee
  Write
  \be
  \|P_{-\d}e^{i\mu}\|_{W^{\fy,\fy}_{n/(\dot{p}\wedge \dot{q})+\ep}}
  \sim\sup_{k\in \bbZ^n}\|\s_kP_{-\d}e^{i\mu}\|_{W^{\fy,\fy}_{n/(\dot{p}\wedge \dot{q})+\ep}}
  \sim
  \sup_{k\in \bbZ^n}\langle k\rangle^{-\d}\|\s_ke^{i\mu}\|_{W^{\fy,\fy}_{n/(\dot{p}\wedge \dot{q})+\ep}}.
  \ee

  Denote by
  \be
  A_k=\{l\in \bbZ^n:\   \s_l\cdot \s_k(\frac{\cdot}{\langle k\rangle^s})\neq 0\}.
  \ee
  Then
  \be
  \begin{split}
    \|\s_ke^{i\mu}\|_{W^{\fy,\fy}_{n/(\dot{p}\wedge \dot{q})+\ep}}
    = &
    \|\s_k\sum_{l\in A_k}\s_l(\langle k\rangle^s\cdot)e^{i\mu}\|_{W^{\fy,\fy}_{n/(\dot{p}\wedge \dot{q})+\ep}}
    \\
    \lesssim &
    \|\s_k\|_{W^{\fy,\fy}_{n/(\dot{p}\wedge \dot{q})+\ep}}
    \|\sum_{l\in A_k}\s_l(\langle k\rangle^s\cdot)e^{i\mu}\|_{W^{\fy,\fy}_{n/(\dot{p}\wedge \dot{q})+\ep}}
    \\
    \lesssim &
    \|\sum_{l\in A_k}\s_l(\langle k\rangle^s\cdot)e^{i\mu}\|_{W^{\fy,\fy}_{n/(\dot{p}\wedge \dot{q})+\ep}},
  \end{split}
  \ee
  where we use the fact
  \be
  \|\s_k\|_{W^{\fy,\fy}_{n/(\dot{p}\wedge \dot{q})+\ep}}=\|\s_0\|_{W^{\fy,\fy}_{n/(\dot{p}\wedge \dot{q})+\ep}}\lesssim 1.
  \ee
  Denote
    \be
  \mu_k(x):=\mu(\frac{x}{\langle k\rangle^s}).
  \ee
  Let us turn to the estimate of
  $\|\sum_{l\in A_k}\s_l(\langle k\rangle^s\cdot)e^{i\mu}\|_{W^{\fy,\fy}_{n/(\dot{p}\wedge \dot{q})+\ep}}$ as follows:
  \be
  \begin{split}
    \|\sum_{l\in A_k}\s_l(\langle k\rangle^s\cdot)e^{i\mu}\|_{W^{\fy,\fy}_{n/(\dot{p}\wedge \dot{q})+\ep}}
    = &
    \|\sum_{l\in A_k}\s_l(\langle k\rangle^s\cdot)e^{i\mu}\|_{\scrF L^{\fy}_{n/(\dot{p}\wedge \dot{q})+\ep}}
    \\
    = &
    \|\scrF^{-1}(\sum_{l\in A_k}\s_l(\langle k\rangle^s\cdot)e^{i\mu})(x)\langle x\rangle^{n/(\dot{p}\wedge \dot{q})+\ep}\|_{L^{\fy}}
    \\
    = &
    \|\langle k\rangle^{-sn}\scrF^{-1}(\sum_{l\in A_k}\s_le^{i\mu_k})(\langle k\rangle^{-s}x)\langle x\rangle^{n/(\dot{p}\wedge \dot{q})+\ep}\|_{L^{\fy}}
    \\
    = &
    \|\langle k\rangle^{-sn}\scrF^{-1}(\sum_{l\in A_k}\s_le^{i\mu_k})(x)\langle \langle k\rangle^{s}x\rangle^{n/(\dot{p}\wedge \dot{q})+\ep}\|_{L^{\fy}}
    \\
    \lesssim &
    \langle k\rangle^{-sn}\cdot \langle k\rangle^{s(n/(\dot{p}\wedge \dot{q})+\ep)}
    \|\scrF^{-1}(\sum_{l\in A_k}\s_le^{i\mu_k})(x)\langle x\rangle^{n/(\dot{p}\wedge \dot{q})+\ep}\|_{L^{\fy}}
    \\
    \lesssim &
    \langle k\ran^{sn(1/(\dot{p}\wedge \dot{q})-1)}
    \langle k\rangle^{s\ep}
    \|\scrF^{-1}(\sum_{l\in A_k}\s_le^{i\mu_k})(x)\langle x\rangle^{n/(\dot{p}\wedge \dot{q})+\ep}\|_{L^{\fy}}
    \\
    \lesssim &
    \langle k\ran^{sn(1/(\dot{p}\wedge \dot{q})-1)}
    \langle k\rangle^{s\ep}
    \sum_{l\in A_k}\|\scrF^{-1}(\s_le^{i\mu_k})(x)\langle x\rangle^{n/(\dot{p}\wedge \dot{q})+\ep}\|_{L^{\fy}}.
  \end{split}
  \ee
  Observe that
  \be
  |A_k|\lesssim \langle k\rangle^{sn}.
  \ee
  We further have
  \be
  \begin{split}
  &\|\sum_{l\in A_k}\s_l(\langle k\rangle^s\cdot)e^{i\mu}\|_{W^{\fy,\fy}_{n/(\dot{p}\wedge \dot{q})+\ep}}
  \\
  \lesssim &
  \langle k\ran^{sn(1/(\dot{p}\wedge \dot{q})-1)}
  \langle k\rangle^{s\ep}
    \sum_{l\in A_k}\|\scrF^{-1}(\s_le^{i\mu_k})(x)\langle x\rangle^{n/(\dot{p}\wedge \dot{q})+\ep}\|_{L^{\fy}}
    \\
    \lesssim &
    \langle k\ran^{sn(1/(\dot{p}\wedge \dot{q})-1)}
    \langle k\rangle^{s\ep+sn}\sup_{l\in A_k}\|\scrF^{-1}(\s_le^{i\mu_k})(x)\langle x\rangle^{n/(\dot{p}\wedge \dot{q})+\ep}\|_{L^{\fy}}
    \\
    = &
    \langle k\rangle^{s(\ep+n/(\dot{p}\wedge \dot{q}))}\sup_{l\in A_k}\|\s_le^{i\mu_k}\|_{W^{\fy,\fy}_{n/(\dot{p}\wedge \dot{q})+\ep}}
    =
    \langle k\rangle^{s(\ep+n/(\dot{p}\wedge \dot{q}))}\sup_{l\in A_k}\|\s_0e^{i\mu_k(\cdot+l)}\|_{W^{\fy,\fy}_{n/(\dot{p}\wedge \dot{q})+\ep}}.
  \end{split}
  \ee

  Write
  \be
  \mu_k(\xi+l)=\mu_k(l)+\int_0^1 \xi\cdot\nabla \mu_k(l+t\xi)dt=:\mu_k(l)+R_l^k.
  \ee
  Then
  \be
  \begin{split}
    \|\s_0e^{i\mu_k(\cdot+l)}\|_{W^{\fy,\fy}_{n/(\dot{p}\wedge \dot{q})+\ep}}
    = &
    \|\s_0e^{iR_l^k}\|_{W^{\fy,\fy}_{n/(\dot{p}\wedge \dot{q})+\ep}}
    \\
    = &
    \|\s_0e^{i\s_0^*R_l^k}\|_{W^{\fy,\fy}_{n/(\dot{p}\wedge \dot{q})+\ep}}
    \\
    \lesssim &
    \|\s_0\|_{W^{\fy,\fy}_{n/(\dot{p}\wedge \dot{q})+\ep}}\exp\left( C\|\s_0^*R_l^k\|_{W^{\fy,\fy}_{n/(\dot{p}\wedge \dot{q})+\ep}}\right)
    \lesssim \exp\left( C\|\s_0^*R_l^k\|_{W^{\fy,\fy}_{n/(\dot{p}\wedge \dot{q})+\ep}}\right).
  \end{split}
  \ee
  Take $h$ to be a $C_c^{\fy}(\rn)$ function such that $h(\xi)=1$ on the support of $\s_0^*$.
  We have
  \be
  \begin{split}
    \|\s_0^*R_l^k\|_{W^{\fy,\fy}_{n/(\dot{p}\wedge \dot{q})+\ep}}
    = &
    \|\s_0^* \int_0^1 \xi\cdot\nabla \mu_k(l+t\xi)h(t\xi) dt\|_{W^{\fy,\fy}_{n/(\dot{p}\wedge \dot{q})+\ep}}
    \\
    \lesssim &
    \sum_{|\g|=1}\|\s_0^*\xi^{\g} \|_{W^{\fy,\fy}_{n/(\dot{p}\wedge \dot{q})+\ep}}\cdot
    \|\int_0^1 h(t\xi)\partial^{\g} \mu_k(l+t\xi)dt\|_{W^{\fy,\fy}_{n/(\dot{p}\wedge \dot{q})+\ep}}
    \\
    \lesssim &
\sum_{|\g|=1}\int_0^1 \|h(t\xi)\partial^{\g} \mu_k(l+t\xi)\|_{W^{\fy,\fy}_{n/(\dot{p}\wedge \dot{q})+\ep}}dt
\\
     \lesssim &
     \sum_{|\g|=1}
     \|h(\xi)\partial^{\g} \mu_k(l+\xi)\|_{W^{\fy,\fy}_{n/(\dot{p}\wedge \dot{q})+\ep}}
     =
     \sum_{|\g|=1}
     \|h(\cdot-l)\partial^{\g} \mu_k\|_{W^{\fy,\fy}_{n/(\dot{p}\wedge \dot{q})+\ep}}.
  \end{split}
  \ee

  To estimate $\|h(\cdot-l)\partial^{\g} \mu_k\|_{W^{\fy,\fy}_{n/(\dot{p}\wedge \dot{q})+\ep}}$,
  we will use the assumption that $\langle \xi \rangle^{-s}\nabla \mu(\xi) \in (W^{\infty,\fy}_{n/(\dot{p}\wedge \dot{q})+\ep})^n$.
  Choose a $C_c^{\fy}(\rn)$ function $g$ such that
  \be
  g(\frac{\xi}{\lan k\ran^s}-k)h(\xi-l)=h(\xi-l),\ \ \ l\in A_k.
  \ee
  Then for $|\g|=1$ we have
  \be
  \begin{split}
  \langle k\rangle^{-s}\|g(\cdot-k)\partial^{\g} \mu\|_{W^{\infty,\fy}_{n/(\dot{p}\wedge \dot{q})+\ep}}
  \sim &
  \|g(\cdot-k)P_{-s}\partial^{\g} \mu\|_{W^{\infty,\fy}_{n/(\dot{p}\wedge \dot{q})+\ep}}
  \\
  \lesssim &
  \|g(\cdot-k)\|_{\wyynp}\|P_{-s}\partial^{\g} \mu\|_{W^{\infty,\fy}_{n/(\dot{p}\wedge \dot{q})+\ep}}
  \\
  \lesssim &
  \|P_{-s}\partial^{\g} \mu\|_{W^{\infty,\fy}_{n/(\dot{p}\wedge \dot{q})+\ep}}\lesssim 1.
  \end{split}
  \ee
  By dilation property of $W^{\infty,\fy}_{n/(\dot{p}\wedge \dot{q})+\ep}$, we get
  \be
  \|g(\frac{\cdot}{\lan k\ran^s}-k)\partial^{\g} \mu(\frac{\cdot}{\langle k\rangle^s})\|_{W^{\infty,\fy}_{n/(\dot{p}\wedge \dot{q})+\ep}}
  \lesssim
  \|g(\cdot-k)\partial^{\g} \mu\|_{W^{\infty,\fy}_{n/(\dot{p}\wedge \dot{q})+\ep}}.
  \ee
  Then
  \be
  \begin{split}
  \|g(\frac{\cdot}{\lan k\ran^s}-k)(\partial^{\g} \mu_k)(\cdot)\|_{W^{\infty,\fy}_{n/(\dot{p}\wedge \dot{q})+\ep}}
  = &
  \langle k\rangle^{-s}\|g(\frac{\cdot}{\lan k\ran^s}-k)\partial^{\g} \mu(\frac{\cdot}{\langle k\rangle^s})\|_{W^{\infty,\fy}_{n/(\dot{p}\wedge \dot{q})+\ep}}
  \\
  \lesssim &
  \langle k\rangle^{-s}\|g(\cdot-k)\partial^{\g} \mu\|_{W^{\infty,\fy}_{n/(\dot{p}\wedge \dot{q})+\ep}}
  \\
  \lesssim &
  \|P_{-s}\partial^{\g} \mu\|_{W^{\infty,\fy}_{n/(\dot{p}\wedge \dot{q})+\ep}}\lesssim 1.
  \end{split}
  \ee
  For every $l\in A_k$, we get
  \be
  \begin{split}
    \|h(\cdot-l)\partial^{\g} \mu_k\|_{W^{\fy,\fy}_{n/(\dot{p}\wedge \dot{q})+\ep}}
  = &
  \|h(\cdot-l)g(\frac{\cdot}{\lan k\ran^s}-k)\partial^{\g} \mu_k\|_{W^{\fy,\fy}_{n/(\dot{p}\wedge \dot{q})+\ep}}
  \\
  \lesssim &
  \|h(\cdot-l)\|_{W^{\fy,\fy}_{n/(\dot{p}\wedge \dot{q})+\ep}}
  \|g(\frac{\cdot}{\lan k\ran^s}-k)\partial^{\g} \mu_k\|_{W^{\fy,\fy}_{n/(\dot{p}\wedge \dot{q})+\ep}}\lesssim 1.
  \end{split}
  \ee
Combining with the above estimates yields that
\be
\begin{split}
  \|P_{-\d}e^{i\mu}\|_{W^{\fy,\fy}_{n/(\dot{p}\wedge \dot{q})+\ep}}\
  \sim &
  \sup_{k\in \bbZ^n}\langle k\rangle^{-\d}\|\s_ke^{i\mu}\|_{W^{\fy,\fy}_{n/(\dot{p}\wedge \dot{q})+\ep}}
  \\
  \lesssim &
  \sup_{k\in \bbZ^n}\langle k\rangle^{s\ep+sn/(\dot{p}\wedge \dot{q})-\d}\lesssim 1,
\end{split}
\ee
where we use the assumption \eqref{pf, 2,2} in the last inequality.
\end{proof}

\subsection{Interpolation with modulation case}
This subsection is devoted to the proof of Theorem \ref{thm, interpolation case}.
Thanks to the additional information of the second order derivative of $\mu$ derived by the prototype $|\xi|^{\b} (\b\in (1,2])$,
the conclusion in Theorem \ref{thm, wiener, potential loss} can be improved to be a more sharpened one. First, we establish
the following bounded result under the assumption associated with the second derivative of $\mu$.
This lemma is a slight generalization of \cite[Theorem 1.1]{NicolaTabacoo2018JPDOA}.
\begin{lemma}\label{lemma, modulation case}
  Suppose $0<p,q\leq \infty$.
  Let $\mu\in C^2(\bbR^n)$ be a real-valued function satisfying
  \be
  \partial^{\g}\mu \in W^{\fy,\fy}_{n/\dot{p}+\ep}\ (|\g|=2)
  \ee
  for some $\ep>0$.
  Then $e^{i\mu(D)}$ is bounded on $\mpq$.
\end{lemma}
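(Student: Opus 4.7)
The plan is to mirror the proof of Theorem~\ref{thm, wiener, potential persistence}, replacing its first-order Taylor expansion by a second-order one so that the hypothesis $\partial^{\gamma}\mu\in W^{\infty,\infty}_{n/\dot{p}+\epsilon}$ with $|\gamma|=2$ is exactly what the estimate consumes at the end. The starting point is the discrete norm $\|f\|_{M^{p,q}}\sim (\sum_{k\in\zn}\|\Box_k f\|_{L^p}^q)^{1/q}$. Writing $\Box_k e^{i\mu(D)}f = \mathcal{F}^{-1}(\sigma_k e^{i\mu}) * \Box_k^* f$ with $\Box_k^* = \sum_{l:\sigma_l\sigma_k\neq 0}\Box_l$, and invoking the band-limited Young-type inequality $\|g*h\|_{L^p}\lesssim\|g\|_{L^{\dot{p}}}\|h\|_{L^p}$ (valid because $\Box_k^* f$ has Fourier support in a set of fixed diameter), reduces the claim to the uniform estimate
\begin{equation*}
\sup_{k\in\zn}\|\mathcal{F}^{-1}(\sigma_k e^{i\mu})\|_{L^{\dot{p}}}<\infty.
\end{equation*}

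Next, I would Taylor-expand $\mu$ around $k$ to second order as $\mu(\xi)=\mu(k)+\nabla\mu(k)\cdot(\xi-k)+R_k(\xi)$ with $R_k(\xi+k)=\sum_{|\gamma|=2}c_\gamma\xi^\gamma\int_0^1(1-t)\partial^\gamma\mu(k+t\xi)\,dt$. The constant $e^{i\mu(k)}$ is unimodular; the linear factor $e^{i\nabla\mu(k)\cdot\xi}$, when hit by $\mathcal{F}^{-1}$, produces a spatial translation that preserves $L^{\dot{p}}$; and the subsequent frequency shift by $k$ that repositions $\sigma_k$ to $\sigma_0$ is an $L^{\dot{p}}$-preserving modulation in space. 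Hence
\begin{equation*}
\|\mathcal{F}^{-1}(\sigma_k e^{i\mu})\|_{L^{\dot{p}}} = \|\mathcal{F}^{-1}\bigl(\sigma_0(\xi) e^{iR_k(\xi+k)}\bigr)\|_{L^{\dot{p}}}.
\end{equation*}
Since $\sigma_0 e^{iR_k(\cdot+k)}$ is compactly supported, the pointwise bound $|\mathcal{F}^{-1}\psi(x)|\langle x\rangle^{n/\dot{p}+\epsilon}\lesssim\|\psi\|_{\mathcal{F}L^\infty_{n/\dot{p}+\epsilon}}$ combined with $\langle\cdot\rangle^{-n/\dot{p}-\epsilon}\in L^{\dot{p}}$ and the equivalence $\mathcal{F}L^\infty_{n/\dot{p}+\epsilon}\sim W^{\infty,\infty}_{n/\dot{p}+\epsilon}$ on compactly supported symbols (a consequence of Lemma~\ref{lemma, equi norm of Wiener}) further reduces matters to $\sup_k\|\sigma_0 e^{iR_k(\cdot+k)}\|_{W^{\infty,\infty}_{n/\dot{p}+\epsilon}}<\infty$.

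At this stage the exponential trick from the proof of Theorem~\ref{thm, wiener, potential persistence} applies verbatim: the algebra property from Lemma~\ref{lemma, conv of Wfy} together with the power series of $e^{iw}$ applied to $w=\sigma_0^* R_k(\cdot+k)$ yields
\begin{equation*}
\|\sigma_0 e^{iR_k(\cdot+k)}\|_{W^{\infty,\infty}_{n/\dot{p}+\epsilon}}\lesssim\exp\bigl(C\|\sigma_0^* R_k(\cdot+k)\|_{W^{\infty,\infty}_{n/\dot{p}+\epsilon}}\bigr).
\end{equation*}
The integral representation of $R_k(\cdot+k)$, Minkowski's integral inequality, translation invariance of the Wiener amalgam norm (clear from the STFT definition), and the dilation estimate of Lemma~\ref{lemma, dila of Wfy} (valid for $t\in(0,1]$) then bound the exponent by $\sum_{|\gamma|=2}\|\partial^\gamma\mu\|_{W^{\infty,\infty}_{n/\dot{p}+\epsilon}}$, which is finite by hypothesis.

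The conceptual gain over Theorem~\ref{thm, wiener, potential persistence} is that on modulation spaces the first-order Taylor term is absorbed for free as a spatial translation of an $L^{\dot{p}}$ norm, so only the second-derivative hypothesis propagates into the working space; this is why the modulation case can exploit boundedness of second derivatives rather than first, and explains the absence of potential loss even for prototypes with $\beta\in(1,2]$. The step I expect to require the most care is justifying the band-limited Young inequality in the quasi-Banach range $p<1$, which rests on the Plancherel--Polya comparison of $L^p$ norms on functions with Fourier support in a fixed compact set and allows Young's inequality to be recovered with constants uniform in $k$.
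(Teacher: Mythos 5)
Your proof is correct and takes essentially the same route as the paper: both reduce the claim to $\sup_{k\in\zn}\|\mathcal{F}^{-1}(\sigma_k e^{i\mu})\|_{L^{\dot p}}<\infty$, Taylor-expand $\mu$ to second order so the linear part drops out as a translation, and bound the remainder piece $\sigma_0 e^{iR_k}$ in $W^{\infty,\infty}_{n/\dot p+\epsilon}$ via the algebra property, exponential series, Minkowski, translation invariance, and Lemma~\ref{lemma, dila of Wfy}. The only cosmetic difference is that the paper reaches the reduction through the abstract convolution relation $M^{p,q}*M^{\dot p,\infty}\subset M^{p,q}$ and the embedding $W^{\infty,\infty}_{n/\dot p+\epsilon}\subset W^{\infty,\dot p}$, while you spell out the band-limited Young inequality and a direct pointwise bound on compactly supported symbols.
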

\begin{proof}
  By the convolution relation
  \be
  \mpq \ast M^{\dot{p},\fy}\subset \mpq,
  \ee
  we only need to verify that $\scrF^{-1}e^{i\mu}\in M^{\dot{p},\fy}$,
  or equivalently, $e^{i\mu}\in W^{\fy,\dot{p}}$.
  Write
\be
\|e^{i\mu}\|_{W^{\fy,\dot{p}}}\sim \sup_{k\in \bbZ^n}\|\s_ke^{i\mu}\|_{W^{\fy,\dot{p}}}
=\sup_{k\in \bbZ^n}\|\s_0 e^{i\mu(\cdot+k)}\|_{W^{\fy,\dot{p}}}
=\sup_{k\in \bbZ^n}\|\s_0 e^{iR_k}\|_{W^{\fy,\dot{p}}},
\ee
where
\be
R_k(\xi):=\mu(k+\xi)-\mu(k)-\nabla \mu(k)\xi
=\sum_{|\g|=2}\frac{2\xi^{\g}}{\g!}\int_0^1(1-t)\partial^{\g}\mu(k+t\xi)dt.
\ee
Next,
\be
\|\s_0 e^{iR_k}\|_{W^{\fy,\dot{p}}}
\lesssim\|\s_0e^{iR_k}\|_{W^{\fy,\fy}_{n/\dot{p}+\ep}}=\|\s_0e^{i\s_0^*R_k}\|_{W^{\fy,\fy}_{n/\dot{p}+\ep}}.
\ee
Using Lemma \ref{lemma, conv of Wfy}, we further conclude that
\be
\begin{split}
\|\s_0e^{i\s_0^*R_k}\|_{W^{\fy,\fy}_{n/\dot{p}+\ep}}
  \lesssim &
  \|\s_0\|_{W^{\fy,\fy}_{n/\dot{p}+\ep}}\|e^{i\s_0^*R_k}\|_{W^{\fy,\fy}_{n/\dot{p}+\ep}}
  \\
  \lesssim &
  \|e^{i\s_0^*R_k}\|_{W^{\fy,\fy}_{n/\dot{p}+\ep}}
  \lesssim
  \exp\left(C\left\|\s_0^*R_k\right\|_{W^{\fy,\fy}_{n/\dot{p}+\ep}}\right).
\end{split}
\ee
We continue this estimate as follows:
\be
\begin{split}
  \|\s_0^*R_k\|_{W^{\fy,\fy}_{n/\dot{p}+\ep}}
  = &
  \left\|\s_0^*\sum_{|\g|=2}\frac{2\xi^{\g}}{\g!}
  \int_0^1(1-t)\partial^{\g}\mu(k+t\xi)dt\right\|_{W^{\fy,\fy}_{n/\dot{p}+\ep}}
  \\
  \lesssim &
  \sum_{|\g|=2}\left\|\s_0^*\xi^{\g}\right\|_{W^{\fy,\fy}_{n/\dot{p}+\ep}}\cdot
  \left\|\int_0^1(1-t)\partial^{\g}\mu(k+t\xi)dt\right\|_{W^{\fy,\fy}_{n/\dot{p}+\ep}}
  \\
  \lesssim &
  \sum_{|\g|=2}
  \left\|\int_0^1(1-t)\partial^{\g}\mu(k+t\xi)dt\right\|_{W^{\fy,\fy}_{n/\dot{p}+\ep}}
  \\
  \lesssim &
  \sum_{|\g|=2}
  \int_0^1 \left\|\partial^{\g} \mu(k+t\xi)\right\|_{W^{\fy,\fy}_{n/\dot{p}+\ep}}dt
  \\
  =&
  \sum_{|\g|=2}\int_0^1 \left\|\partial^{\g} \mu(t\xi)\right\|_{W^{\fy,\fy}_{n/\dot{p}+\ep}}dt
  \leq
  \sum_{|\g|=2}\left\|\partial^{\g} \mu\right\|_{W^{\fy,\fy}_{n/\dot{p}+\ep}}£¬
\end{split}
\ee
where in the last inequality we use the dilation property property in Lemma \ref{lemma, dila of Wfy}.

Combining the above estimates yields that
\be
\begin{split}
\|e^{i\mu}\|_{W^{\fy,\dot{p}}}\sim &\sup_{k\in \bbZ^n}\|\s_ke^{i\mu}\|_{W^{\fy,\dot{p}}}
\\
\lesssim &
\sup_{k\in \bbZ^n}\|\s_0e^{i\s_0^*R_k}\|_{W^{\fy,\fy}_{n/\dot{p}+\ep}}
\\
\lesssim &
\sup_{k\in \bbZ^n}\exp\left(C\left\|\s_0^*R_k\right\|_{W^{\fy,\fy}_{n/\dot{p}+\ep}}\right)
\lesssim
\exp\left(C\sum_{|\g|=2}\left\|\partial^{\g} \mu\right\|_{W^{\fy,\fy}_{n/\dot{p}+\ep}}\right)\lesssim 1.
\end{split}
\ee
\end{proof}

\begin{proof}[Proof of Theorem \ref{thm, interpolation case}]
Note that $M^{p,p}=W^{p,p}$. By Lemma \ref{lemma, modulation case}, we obtain
the boundedness of $e^{i\mu(D)}$ on $W^{p,p}$.
The desired conclusion then follows by the interpolation between this and the
boundedness of $e^{i\mu(D)}: W^{p,\fy}_{\d}\rightarrow \wpq (\d>sn/(\dot{p}\wedge \dot{q}))$ derived by Theorem \ref{thm, wiener, potential loss}.
\end{proof}

\subsection{An application to derivative condition}
We give the proof of Corollary \ref{coy, bd, exact conditions} in this subsection.
By establishing an embedding lemma, the conclusion in Corollary \ref{coy, bd, exact conditions} follows by
the boundedness result in Theorem \ref{thm, interpolation case}.
We state the embedding lemma as follows.

\begin{lemma}\label{lemma, derivative to wiener}
Let $N\in \mathbb{N}$,  $f\in \calC^N$, where $\calC^N$ is defined by
\be
\calC^N:=\{g\in C^N: |\partial^{\g}g|\lesssim 1\ \text{for all}\ |\g|\leq N\}.
\ee
with the norm $\|g\|_{\calC^N}:=\sum_{|\g|\leq N}\|\partial^{\g}g\|_{L^{\fy}}$.
We have $f\in W^{\fy,\fy}_N$ and
\be
\|f\|_{W^{\fy,\fy}_N}\lesssim \|f\|_{\calC^N}.
\ee
\end{lemma}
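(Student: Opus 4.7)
The plan is to use the identification $W^{\infty,\infty}_N = M^{\infty,\infty}_N$ recorded right after Lemma \ref{lemma, equi norm of Wiener}, which reduces the claim to
$$\sup_{k\in\bbZ^n}\langle k\rangle^N\|\Box_k f\|_{L^\infty}\lesssim\|f\|_{\calC^N}.$$
When $|k|\lesssim 1$ the bound is immediate from Young's inequality, since $\|\Box_k f\|_{L^\infty}\leq\|\scrF^{-1}\sigma_k\|_{L^1}\|f\|_{L^\infty}\lesssim\|f\|_{\calC^N}$ and $\langle k\rangle^N$ is bounded in this range. The real content is the regime $|k|\geq 1$, where one must extract the decay factor $|k|^{-N}$ from $\|\Box_k f\|_{L^\infty}$.

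For $|k|\geq 1$ I would pick an index $j=j(k)\in\{1,\dots,n\}$ with $|k_j|=|k|_\infty$, so that $|k_j|\sim|k|$. On the support of $\sigma_k$ one has $|\xi_j-k_j|\leq 3/4$ and therefore $|\xi_j|\sim|k|$, in particular $\xi_j\neq 0$. This allows a factorization
$$\sigma_k(\xi)\hat f(\xi)=\psi_k(\xi)\cdot(2\pi i\xi_j)^N\hat f(\xi),\qquad \psi_k(\xi):=\frac{\sigma_k(\xi)}{(2\pi i\xi_j)^N},$$
with $\psi_k$ extended by zero off $\mathrm{supp}\,\sigma_k$. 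Since $(2\pi i\xi_j)^N\hat f(\xi)=\scrF(\partial_j^N f)(\xi)$ and $\|\partial_j^N f\|_{L^\infty}\leq\|f\|_{\calC^N}$, Young's inequality gives
$$\|\Box_k f\|_{L^\infty}\leq\|\scrF^{-1}\psi_k\|_{L^1}\cdot\|f\|_{\calC^N}.$$

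The main obstacle, and the heart of the argument, is to verify the uniform bound $\|\scrF^{-1}\psi_k\|_{L^1}\lesssim|k|^{-N}$. I would translate coordinates: writing $\psi_k(\xi)=g_k(\xi-k)$ with $g_k(\eta):=\sigma_0(\eta)(2\pi i(k_j+\eta_j))^{-N}$, the identity $\scrF^{-1}\psi_k(x)=e^{2\pi ikx}\scrF^{-1}g_k(x)$ shows the two have the same $L^1$ norm. Since $\sigma_0\in C_c^\infty$ has fixed support and $|k_j+\eta_j|\sim|k|$ uniformly on $\mathrm{supp}\,\sigma_0$, Leibniz's rule yields $\|\partial^\alpha g_k\|_{L^\infty}\lesssim_\alpha|k|^{-N}$ for every multi-index $\alpha$, with constants independent of $k$. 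Standard integration by parts in the inverse Fourier transform (taking enough derivatives on the smooth, compactly supported $g_k$) then produces pointwise decay $|\scrF^{-1}g_k(x)|\lesssim|k|^{-N}\langle x\rangle^{-n-1}$, which is integrable with $L^1$ norm $\lesssim|k|^{-N}$. Combining the two regimes yields $\langle k\rangle^N\|\Box_k f\|_{L^\infty}\lesssim\|f\|_{\calC^N}$ uniformly, finishing the proof.
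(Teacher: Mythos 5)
Your proof is correct, but it takes a genuinely different route from the paper's. The paper uses the spatial decomposition coming from Lemma~\ref{lemma, equi norm of Wiener}, namely $\|f\|_{W^{\infty,\infty}_N}\sim\sup_{k}\|\scrF^{-1}(\sigma_kf)(\cdot)\langle\cdot\rangle^N\|_{L^\infty}$, where $\sigma_k$ localizes $f$ multiplicatively in space. It then extracts the $\langle x\rangle^{-N}$ decay of $\scrF^{-1}(\sigma_kf)$ by integrating by parts with the invariant derivative $L_x=\frac{x\cdot\nabla_\xi}{2\pi i|x|^2}$, the uniform constant coming directly from $\|\partial^\gamma f\|_{L^\infty}\lesssim 1$ and the fixed support size of $\sigma_k$. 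You instead use the identity $W^{\infty,\infty}_N=M^{\infty,\infty}_N$ explicitly and work with the frequency decomposition $\sup_k\langle k\rangle^N\|\Box_kf\|_{L^\infty}$; the weight $\langle k\rangle^N$ is then absorbed by a Mikhlin-type factorization $\sigma_k=\psi_k\cdot(2\pi i\xi_j)^N$ together with the uniform kernel bound $\|\scrF^{-1}\psi_k\|_{L^1}\lesssim|k|^{-N}$. The two arguments are dual: the paper gains decay in the Fourier variable $x$ from the smoothness of the space-localized piece $\sigma_kf$, while you gain decay in the frequency index $k$ from the smoothness of $f$ itself after isolating a single large component $\xi_j$. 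Each step in your argument checks out (the choice of $j$ with $|k_j|=|k|_\infty$, the Leibniz estimate $\|\partial^\alpha g_k\|_{L^\infty}\lesssim|k|^{-N}$ on the fixed compact support of $\sigma_0$, and the resulting uniform $L^1$ bound), so the proof is complete; the paper's version is perhaps marginally shorter because the weight is built into the $\scrF L^\infty_N$ norm and only one integration-by-parts operator is needed, but your version is the one a reader familiar with Mikhlin--H\"{o}rmander multiplier estimates would find most natural.
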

\begin{proof}
  Write
  \be
  \|f\|_{W^{\fy,\fy}_N}
  \sim
  \sup_{k\in \zn}\|\s_kf\|_{W^{\fy,\fy}_N}
  \sim
  \sup_{k\in \zn}\|\scrF^{-1}(\s_kf)(\cdot)\lan \cdot\ran^{N}\|_{L^{\fy}}
  \ee
  To estimate $\|\scrF^{-1}(\s_kf)(\cdot)\lan \cdot\ran^{N}\|_{L^{\fy}}$,
  we define the invariant derivative
  \be
  L_x:=\frac{x\cdot \nabla_{\xi}}{2\pi i|x|^2}.
  \ee
  Then
  \be
  L_x(e^{2\pi ix\cdot \xi})=e^{2\pi ix\cdot \xi}.
  \ee
  We also have
  \be
  L_x^*=-L_x.
  \ee
  Then
  \be
  \begin{split}
    \scrF^{-1}(\s_kf)(x)
    = &
    \int_{\rn}\s_k(\xi)f(\xi)(L_x)^N(e^{2\pi ix\cdot \xi})d\xi
    \\
    = &
    \int_{\rn}(L_x^*)^N(\s_k(\xi)f(\xi))e^{2\pi ix\cdot \xi}d\xi.
  \end{split}
  \ee
  From this, we further have
  \be
  \begin{split}
  |\scrF^{-1}(\s_kf)(x)|
  \lesssim &
  \int_{\rn}|(L_x^*)^N(\s_k(\xi)f(\xi))|d\xi
  \\
  \lesssim &
  |x|^{-N}\int_{\rn}\sum_{|\g_1|+|\g_2|=N}|\partial^{\g_1}\s_k(\xi)\partial^{\g_2}f(\xi)|d\xi
  \\
  \lesssim &
  |x|^{-N}\|f\|_{\calC^N}\int_{\rn}\sum_{|\g_1|+|\g_2|=N}|\partial^{\g_1}\s_k(\xi)|d\xi
  \lesssim \|f\|_{\calC^N}|x|^{-N}.
  \end{split}
  \ee
  This and the fact $|\scrF^{-1}(\s_kf)(x)|\lesssim \|\s_kf\|_{L^1}\lesssim \|f\|_{\calC^N}$ yield that
  \be
  |\scrF^{-1}(\s_kf)(x)|\lesssim \|f\|_{\calC^N}\lan x\ran^{-N}.
  \ee
  From this, we get the desired conclusion
  $$\|f\|_{W^{\fy,\fy}_N}
  \sim
  \sup_{k\in \zn}\|\scrF^{-1}(\s_kf)(\cdot)\lan \cdot\ran^{N}\|_{L^{\fy}}\lesssim \|f\|_{\calC^N}.$$
\end{proof}

Next, we recall a useful lemma for dealing with the multiplier near the origin.

\begin{lemma}[see \cite{Miyachi2009PAMS, Tomita2010}]\label{lemma, near the origin}
  Let $0<p\leq \infty$, $\ep>n(1/\dot{p}-1)$.
  Let $\mu$ be a $C^{[n(1/\dot{p}-1/2)]+1}(\mathbb{R}^n \backslash \{0\})$ function with compact support, satisfying
\begin{equation}
|\partial^{\gamma}\mu(\xi)|\leq C_{\gamma}|\xi|^{\epsilon-|\gamma|}, \hspace{5mm} |\xi|\neq 0 ,\ \ |\gamma|\leq[n(1/\dot{p}-1/2)]+1,
\end{equation}
then $m\in \scrF L^{\dot{p}}$.
\end{lemma}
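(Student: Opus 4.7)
The plan is to use a dyadic decomposition near the origin, combined with a rescaling argument and a Sobolev-Plancherel estimate that turns derivative bounds of $\mu$ into $L^{\dot{p}}$ bounds of $\scrF^{-1}\mu$. Let me fix a smooth radial partition of unity $\sum_{j\in \mathbb{Z}}\psi_j(\xi)=1$ on $\mathbb{R}^n\setminus\{0\}$ with $\psi_j$ supported in $\{|\xi|\sim 2^j\}$, and write
\begin{equation*}
\mu(\xi)=\sum_{j\le 0}\mu_j(\xi)+\mu_{\mathrm{hi}}(\xi),\qquad \mu_j:=\psi_j\,\mu,
\end{equation*}
where $\mu_{\mathrm{hi}}:=\bigl(\sum_{j>0}\psi_j\bigr)\mu$ is smooth (the singularity at $0$ is removed) and compactly supported, hence $\scrF^{-1}\mu_{\mathrm{hi}}\in \mathscr{S}\subset L^{\dot{p}}$. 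The whole game is to estimate each low-frequency piece $\mu_j$.

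Next I would rescale: set $\mu_j^{(0)}(\eta):=\mu_j(2^j\eta)$, which is supported in a fixed annulus $\{|\eta|\sim 1\}$ independent of $j$. From the hypothesis $|\partial^\gamma\mu(\xi)|\lesssim |\xi|^{\epsilon-|\gamma|}$ for $|\gamma|\le N:=[n(1/\dot{p}-1/2)]+1$, the chain rule gives $|\partial^\gamma \mu_j^{(0)}|\lesssim 2^{j\epsilon}$ uniformly on the support. The scaling identity $\scrF^{-1}\mu_j(x)=2^{jn}(\scrF^{-1}\mu_j^{(0)})(2^j x)$ then yields
\begin{equation*}
\|\scrF^{-1}\mu_j\|_{L^{\dot{p}}}=2^{jn(1-1/\dot{p})}\,\|\scrF^{-1}\mu_j^{(0)}\|_{L^{\dot{p}}}.
\end{equation*}

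The core step is to bound $\|\scrF^{-1}\mu_j^{(0)}\|_{L^{\dot{p}}}$ by $2^{j\epsilon}$. Since $\dot{p}\le 1$, I would apply H\"older's inequality in the form
\begin{equation*}
\int|\scrF^{-1}\mu_j^{(0)}|^{\dot{p}}\,dx\le \Bigl(\int|\scrF^{-1}\mu_j^{(0)}(x)|^{2}\langle x\rangle^{2\sigma}dx\Bigr)^{\dot{p}/2}\Bigl(\int\langle x\rangle^{-2\sigma \dot{p}/(2-\dot{p})}dx\Bigr)^{(2-\dot{p})/2},
\end{equation*}
with $\sigma=N>n(1/\dot{p}-1/2)$, so the second factor is finite. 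For the first factor, Plancherel rewrites the weighted $L^2$ norm of $\scrF^{-1}\mu_j^{(0)}$ as the $H^\sigma$ norm of $\mu_j^{(0)}$, which by the derivative bounds above is at most $2^{j\epsilon}$ (this is where the number $N$ of derivatives is exactly what is needed). Going back through the rescaling gives $\|\scrF^{-1}\mu_j\|_{L^{\dot{p}}}\lesssim 2^{j(\epsilon-n(1/\dot{p}-1))}$.

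Finally, I would assemble the pieces using the $\dot{p}$-triangle inequality in $L^{\dot{p}}$:
\begin{equation*}
\|\scrF^{-1}\mu\|_{L^{\dot{p}}}^{\dot{p}}\lesssim \|\scrF^{-1}\mu_{\mathrm{hi}}\|_{L^{\dot{p}}}^{\dot{p}}+\sum_{j\le 0}2^{j\dot{p}(\epsilon-n(1/\dot{p}-1))},
\end{equation*}
and the series converges precisely because the hypothesis $\epsilon>n(1/\dot{p}-1)$ makes the exponent positive, so the geometric sum over $j\le 0$ is finite. The main obstacle is the $\dot{p}<1$ case: the naive $\|\scrF^{-1}g\|_{L^{\dot{p}}}\lesssim \|g\|_{L^1}$ fails, and one must trade smoothness for $L^{\dot{p}}$ decay; the Cauchy--Schwarz/Plancherel step with the sharp choice $\sigma=[n(1/\dot{p}-1/2)]+1$ is what makes the derivative count match the statement. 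The dyadic-summation condition $\epsilon>n(1/\dot{p}-1)$ then emerges naturally as the threshold for convergence.
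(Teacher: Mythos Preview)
Your argument is the standard one and is essentially correct; note that the paper does not supply a proof of this lemma at all but simply cites it from \cite{Miyachi2009PAMS, Tomita2010}, so there is no in-paper proof to compare against. The dyadic decomposition plus rescaling plus the Cauchy--Schwarz/Plancherel bound with $\sigma=N=[n(1/\dot p-1/2)]+1$ derivatives is exactly the mechanism behind those references.

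One small slip: you assert that $\mu_{\mathrm{hi}}$ is smooth and hence $\scrF^{-1}\mu_{\mathrm{hi}}\in\mathscr{S}$, but the hypothesis only gives $\mu\in C^{N}(\rn\setminus\{0\})$, so $\mu_{\mathrm{hi}}$ is merely $C^N$ with compact support away from the origin, not Schwartz. This is harmless, since the very same H\"older--Plancherel estimate you use for the dyadic pieces applies verbatim to $\mu_{\mathrm{hi}}$ (it is supported in a fixed compact set and has bounded derivatives up to order $N$), yielding $\scrF^{-1}\mu_{\mathrm{hi}}\in L^{\dot p}$. With that correction the proof goes through.
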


\begin{proof}[Proof of Corollary \ref{coy, bd, exact conditions}]
  Let $\r_0$ be a smooth function supported on B(0,1) and
  satisfies $\r_0(\xi)=1$ on $B(0,1/2)$. Denote by
  \be
  \mu_1:= \r_0\mu,\ \ \ \mu_2:= (1-\r_0)\mu.
  \ee
  Take $\r_0^*$ to be a smooth function supported on B(0,2), satisfying that $\r_0\cdot \r_0^*=\r_0$.
  Note that
  \be
  |\partial^{\g}[\r_0^*(e^{i\mu_1}-1](\xi)|\lesssim |\xi|^{\ep-|\g|},\ |\xi|\neq 0,\ \ |\gamma|\leq[n/(1/\dot{p}-1/2)]+1.
  \ee
  Using Lemma \ref{lemma, near the origin}, we have $\r_0^*(e^{i\mu_1}-1) \in \scrF L^{\dot{p}}$.
  Then,
  \be
  e^{i\mu_1}=\r_0^*(e^{i\mu_1}-1)+\r_0^*\in \scrF L^{\dot{p}}.
  \ee
  Moreover, observing that $e^{i\mu_1}$ have compact support, we further have
  \be
 \|\scrF^{-1}e^{i\mu_1}\|_{W^{\dot{p},\fy}}\sim
 \|e^{i\mu_1}\|_{M^{\fy,\dot{p}}}\sim \|e^{i\mu_1}\|_{\scrF L^{\dot{p}}}\lesssim 1.
  \ee
  It follows by $\wpq\ast W^{\dot{p},\fy}\subset \wpq$
  that
  $e^{i\mu_1(D)}$ is bounded on $\wpq$.

  Now, we turn to deal with $e^{i\mu_2}$.\\
  If $\b\in (0,1]$, we have,
  \be
  |\partial^{\g}\mu_2(\xi)|\lesssim 1,\ \ \ \ 1\leq|\gamma|\leq[n/(\dot{p}\wedge \dot{q})]+2.
  \ee
  Thus
  \be
  \nabla\mu_2\in (\calC^{[n/(\dot{p}\wedge \dot{q})]+1})^n
  \ee
  There exists a small positive constant $\ep$ such that $n/(\dot{p}\wedge \dot{q})+\ep\leq [n/(\dot{p}\wedge \dot{q})]+1$ and
   \be
   W^{\fy,\fy}_{[n/(\dot{p}\wedge \dot{q})]+1}\subset W^{\fy,\fy}_{n/(\dot{p}\wedge \dot{q})+\ep}.
   \ee
  From this and the embedding relation
  $\calC^{[n/(\dot{p}\wedge \dot{q})]+1}\subset W^{\fy,\fy}_{[n/(\dot{p}\wedge \dot{q})]+1}$
  from Lemma \ref{lemma, derivative to wiener}, we further deduce that
  \be
  \nabla\mu_2\in (\calC^{[n/(\dot{p}\wedge \dot{q})]+1})^n\subset (W^{\fy,\fy}_{[n/(\dot{p}\wedge \dot{q})]+1})^n
  \subset  (W^{\fy,\fy}_{n/(\dot{p}\wedge \dot{q})+\ep})^n.
  \ee
  It then follows by Theorem \ref{thm, wiener, potential persistence} that $e^{i\mu_2(D)}$ is bounded on $\wpq$.\\
  If $\b\in (1,2]$, we have
  \be
  |\partial^{\g}\mu_2(\xi)|\lesssim |\xi|^{\b-|\g|},\ \ \ \ 1\leq|\gamma|\leq[n/\dot{p}]+3.
  \ee
  Denote by $s:=\b-1$. We have
  \be
  \lan \xi\ran^{-s}\nabla\mu_2\in (\calC^{[n/(\dot{p}\wedge \dot{q})]+1})^n,
  \ \
  \partial^{\g}\mu_2 \in \calC^{[n/(\dot{p}\wedge \dot{q})]+1}\ (|\g|=2).
  \ee
  Hence, there exists a small positive constant $\ep$ such that
  \be
  \lan \xi\ran^{-s}\nabla\mu_2\in (W^{\fy,\fy}_{n/(\dot{p}\wedge \dot{q})+\ep})^n,
  \ \
  \partial^{\g}\mu_2 \in W^{\fy,\fy}_{n/(\dot{p}\wedge \dot{q})+\ep}\subset W^{\fy,\fy}_{n/\dot{p}+\ep}\  (|\g|=2).
  \ee
  This and Theorem \ref{thm, interpolation case} imply the boundedness of $e^{i\mu_2(D)}$
  from $W^{p,q}_{\d}$ to $\wpq$, where $\d\geq sn|1/p-1/q|=n(\b-1)|1/p-1/q|$
  with strict inequality when $p\neq q$.

  Combining the above two cases we have that
  $e^{i\mu_2(D)}: W^{p,q}_{\d} \rightarrow \wpq$ is bounded, where $\d\geq n|1/p-1/q|\max\{\b-1,0\}$
  with strict inequality when $\b>1$, $p\neq q$.
  This and the boundedness of $e^{i\mu_1(D)}$ yield the final conclusion of Corollary \ref{coy, bd, exact conditions}.
\end{proof}

\section{sharp loss of potential}
This section is devoted to the proof of Theorem \ref{thm, wiener, sharp potential loss}.
To this end, we first develop the scattered property of $\nabla\mu$ in Lemma \ref{lemma, scattered},
and then use this to establish some useful estimates of certain special functions in Lemma \ref{lemma, estimates of special}.
Thanks to these estimates and a rotation trick,
the boundedness results on Wiener amalgam spaces can be reduced to the simple embedding relations of weighted sequences.
This yields the final conclusion in Theorem \ref{thm, wiener, sharp potential loss}.

\begin{lemma}[Scattered set]\label{lemma, scattered}
 Let $1<\b\leq 2$, and let $\mu$ be a real-valued $C^2(\bbR^n\bs \{0\})$ function.
 Suppose that the Hessian determinant of $\mu$ is not zero at some point $\k_0$ with $|\k_0|=1$.
  Moreover,
  \be
  \mu(\la\xi)=\la^{\b}\mu(\xi),\ \ \ \la\geq 1,\ \ \xi\in B(\k_0, r_0)\cap \bbS^{n-1}
  \ee
  for some $r_0>0$.
  Then there exists a cone $\G$ with vertex at the origin such that
  the set $\{\nabla \mu(\xi_l)\}_{l\in E}$ is scattered in the following sense:
  \be
  |\nabla \mu(\xi_k)-\nabla \mu(\xi_l)|\geq C>0
  \ee
for $k,l\in E$, $k\neq l$, where
\be
E:= \{l\in \bbZ^n\bs\{0\}: \langle l\rangle^{\frac{2-\b}{\b-1}}l\in \G\},\ \xi_l:=\langle l\rangle^{\frac{2-\b}{\b-1}}l.
\ee
\end{lemma}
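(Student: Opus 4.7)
The strategy is to carve out a narrow convex cone $\Gamma$ around the ray through $\kappa_0$ on which $\nabla\mu$ is a bi-Lipschitz diffeomorphism at every scale $r \geq 1$, and then combine this with the quantitative separation of the points $\xi_l$ furnished by Lemma~\ref{lemma, position}.

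First I would shrink a neighborhood of $\kappa_0$ on $\mathbb{S}^{n-1}$ to an arc on which (i) the homogeneity assumption $\mu(\lambda\xi) = \lambda^\beta \mu(\xi)$, $\lambda \geq 1$, applies, and (ii) $\mathrm{Hess}(\mu)$ remains uniformly invertible. Let $\Gamma$ be the open convex cone above this arc. Differentiating the homogeneity identity shows $\nabla\mu$ is $(\beta-1)$-homogeneous and $\mathrm{Hess}(\mu)$ is $(\beta-2)$-homogeneous on $\Gamma \cap \{|\xi| \geq 1\}$. Euler's identity $\mathrm{Hess}(\mu)(\kappa_0)\kappa_0 = (\beta-1)\nabla\mu(\kappa_0)$ forces $\nabla\mu(\kappa_0) \neq 0$ (otherwise $\kappa_0$ would lie in the kernel of the invertible Hessian) and, since $\kappa_0 \notin T_{\kappa_0}\mathbb{S}^{n-1}$, ensures that $\nabla\mu(\kappa_0)$ is not tangent to the image surface $\nabla\mu(\Gamma \cap \mathbb{S}^{n-1})$. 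Hence after shrinking $\Gamma$ once more, $\omega \mapsto \nabla\mu(\omega)/|\nabla\mu(\omega)|$ is injective on $\Gamma \cap \mathbb{S}^{n-1}$, which together with positive-degree homogeneity makes $\nabla\mu$ a bijective $C^1$ diffeomorphism from $\Gamma \cap \{|\xi| \geq 1\}$ onto its image.

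Next, for $\xi, \eta \in \Gamma$ with $|\xi|, |\eta| \in [r, 2r]$ and $r \geq 1$, convexity of $\Gamma$ keeps the segment $[\xi, \eta]$ inside $\Gamma$ at magnitude $\sim r$. Homogeneity gives $\|\mathrm{Hess}(\mu)(\zeta)\| \sim r^{\beta-2}$ and $\|\mathrm{Hess}(\mu)(\zeta)^{-1}\| \sim r^{2-\beta}$ uniformly along the segment (and along its image segment under $\nabla\mu$, by the diffeomorphism property). Applying the mean value theorem to $\nabla\mu$ and to its inverse yields
\begin{equation*}
c\, r^{\beta-2}\, |\xi - \eta| \leq |\nabla\mu(\xi) - \nabla\mu(\eta)| \leq C\, r^{\beta-2}\, |\xi - \eta|
\end{equation*}
with $c, C > 0$ independent of $r$. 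Also, $|\nabla\mu|$ is bounded above and below by positive constants on the compact shell $\Gamma \cap \mathbb{S}^{n-1}$, so $|\nabla\mu(\xi)| \sim |\xi|^{\beta-1}$ throughout $\Gamma \cap \{|\xi| \geq 1\}$.

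Finally, note $|\xi_l| = \langle l\rangle^{(2-\beta)/(\beta-1)}|l| \geq 1$ for every $l \in \mathbb{Z}^n \setminus \{0\}$ and $\beta \in (1,2]$, and $|\xi_l| \sim \langle l\rangle^{1/(\beta-1)} =: r_l$. Given distinct $k, l \in E$, I split into two cases. \emph{Comparable scales:} if $|\xi_k|/|\xi_l| \in [1/K, K]$ for a fixed $K$, applying Lemma~\ref{lemma, position} with $\alpha = 2 - \beta \in [0, 1)$ (note $\alpha/(1-\alpha) = (2-\beta)/(\beta-1)$) gives pairwise disjoint balls $B(\xi_l, \delta \langle l\rangle^{(2-\beta)/(\beta-1)})$, so $|\xi_k - \xi_l| \gtrsim \langle l\rangle^{(2-\beta)/(\beta-1)} \sim r_l^{2-\beta}$; combined with the bi-Lipschitz bound at scale $r_l$,
\begin{equation*}
|\nabla\mu(\xi_k) - \nabla\mu(\xi_l)| \gtrsim r_l^{\beta-2}\cdot r_l^{2-\beta} = 1.
\end{equation*}
\emph{Very different scales:} if $|\xi_l|/|\xi_k| > K$ (say WLOG), then $|\nabla\mu(\xi_l)| \sim r_l^{\beta-1}$ dominates $|\nabla\mu(\xi_k)| \sim |\xi_k|^{\beta-1} \lesssim K^{-(\beta-1)} r_l^{\beta-1}$, so choosing $K$ large enough gives $|\nabla\mu(\xi_k) - \nabla\mu(\xi_l)| \gtrsim r_l^{\beta-1} \geq 1$.

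The delicate point is Step~1: upgrading the purely \emph{local} diffeomorphism produced by the invertible Hessian at $\kappa_0$ to a \emph{global} injectivity on the whole cone $\Gamma \cap \{|\xi| \geq 1\}$. The Euler-identity-based transversality of the image surface to the radial direction is what lets this upgrade succeed after a sufficient shrinking of the angular opening of $\Gamma$; once this is done, the subsequent homogeneity scaling arguments are routine.
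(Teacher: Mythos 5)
Your overall strategy is in the same spirit as the paper's, and the split into comparable versus very different scales, the use of Lemma~\ref{lemma, position} with $\alpha = 2-\beta$, and the homogeneity scalings are all aligned with what the paper does. The ``very different scales'' branch is sound. The gap is in the comparable-scales branch, specifically in the claimed lower bi-Lipschitz estimate $|\nabla\mu(\xi)-\nabla\mu(\eta)|\geq c\,r^{\beta-2}|\xi-\eta|$.

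You justify this by ``applying the mean value theorem to $\nabla\mu$ and to its inverse.'' The upper bound is fine, but the lower bound via $(\nabla\mu)^{-1}$ requires the segment $[\nabla\mu(\xi),\nabla\mu(\eta)]$ in the \emph{target} space to lie inside the image $\nabla\mu(\Gamma\cap\{r\leq|\zeta|\leq 2r\})$, where $(\nabla\mu)^{-1}$ and its derivative $\mathrm{Hess}(\mu)^{-1}$ are defined. That image need not be convex, and you have not shown that the segment stays inside it; so the mean value inequality for the inverse cannot be applied as stated. An alternative would be to argue directly via $\nabla\mu(\xi)-\nabla\mu(\eta)=\int_0^1\mathrm{Hess}(\mu)(\eta+\theta(\xi-\eta))(\xi-\eta)\,d\theta$, but here one must rule out cancellation in the vector integral: the assumption is only that $\mathrm{Hess}(\mu)(\kappa_0)$ is \emph{invertible}, not definite, so the Hessians along the path may have mixed-sign eigenvalues and rotate the vector $\xi-\eta$ in ways that could make the integral small even though each integrand is large. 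Uniform invertibility of each $\mathrm{Hess}(\mu)(\zeta(\theta))$ does not by itself give a lower bound on the norm of their average applied to a fixed vector.

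The paper circumvents exactly this difficulty with a conjugation trick: it sets $Q=P^{-1}\,\mathrm{diag}(\mathrm{sgn}\,\lambda_j)\,P$ so that $A(\xi)=Q\,\mathrm{Hess}(\mu)(\xi)$ has $x^{T}A(\kappa_0)x\geq c|x|^2$, hence by continuity and homogeneity $x^{T}A(\zeta)x\gtrsim|\zeta|^{\beta-2}|x|^2$ on a narrow cone. Then it replaces the vector-valued path integral by the scalar $F(\theta)=\nabla\mu(\xi_l+\theta(\xi_k-\xi_l))\cdot Q(\xi_k-\xi_l)$, whose derivative $F'(\theta)=(\xi_k-\xi_l)^T A(\zeta(\theta))(\xi_k-\xi_l)$ is strictly positive and bounded below by $|\zeta(\theta)|^{\beta-2}|\xi_k-\xi_l|^2$. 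Monotonicity of $F$ eliminates the cancellation problem entirely, and $|F(1)-F(0)|\leq|\nabla\mu(\xi_k)-\nabla\mu(\xi_l)|\,|\xi_k-\xi_l|$ (since $Q$ is orthogonal) converts the scalar bound into the desired estimate on $|\nabla\mu(\xi_k)-\nabla\mu(\xi_l)|$. This also makes the case distinction you make superfluous: the paper's single estimate, combined with the integral bound $\int_0^1|\zeta(\theta)|^{\beta-2}d\theta\gtrsim\min\{|\xi_l|^{\beta-2},|\xi_k-\xi_l|^{\beta-2}\}$, handles both regimes at once. To repair your argument you would need to either import this $Q$-conjugation and scalar-monotonicity device, or else give a careful justification (perhaps restricting to a sufficiently narrow cone and thin dyadic sub-annuli, and showing the Hessian varies little along the relevant segments) that the path integral does not lose a constant factor.
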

\begin{proof}
Note that the Hessian matrix of $\mu$ is a real symmetric matrix.
This and the assumption
$|\hes\mu(\k_0)|\neq 0$ imply that all the eigenvalues
of $\hes\mu(\k_0)$, denoted by $\la_j,j=1,2,\cdots,n$, are nonzero real numbers.
Write
\be
\hes\mu(\k_0)=P^{-1}diag(\la_1,\la_2,\cdots,\la_n)P,
\ee
where $P$ is a orthogonal matrix.
Take
\be
B=diag(sgn\la_1, sgn\la_2,\cdots, sgn\la_n),\ \ Q=P^{-1}BP.
\ee
Denote by $A(\xi):=Q(\hes\mu(\xi))$. Then
\be
\begin{split}
  A(\k_0):=Q(\hes\mu(\k_0))= &P^{-1}B(diag(\la_1,\la_2,\cdots,\la_n))P
  \\
  = &P^{-1}(diag(|\la_1|,|\la_2|,\cdots,|\la_n|))P.
\end{split}
\ee
By the continuity of $\partial_{i,j}\mu$, we have
\ben\label{pf, 3,1}
\lim_{\xi\rightarrow 0}\|\hes\mu(\xi)-\hes\mu(\k_0)\|=0,
\een
where the norm of a matrix $A$ is defined by
\begin{equation*}
  \|A\|:=\left(\sum_{i=1}^m\sum_{j=1}^n|a_{ij}|^2\right)^{1/2}.
\end{equation*}

For a nonzero vector $x\in \rn$,
\be
\begin{split}
  x^TA(\k_0)x
  =&
  x^TP^{-1}(diag(|\la_1|,|\la_2|,\cdots,|\la_n|))Px
  \\
  = &
  (Px)^T(diag(|\la_1|,|\la_2|,\cdots,|\la_n|))Px
  \\
  = &
  \sum_{j=1}^n|\la_j|\cdot|(Px)_j|^2\geq \min_{1\leq j\leq n}|\la_j|\cdot|Px|^2=\min_{1\leq j\leq n}|\la_j|\cdot|x|^2
\end{split}
\ee
and
\be
\begin{split}
   |x^T(A(\xi)-A(\k_0))x|
 = &
 |x^TQ(\text{Hess}\mu(\xi)-\text{Hess}\mu(\k_0))x|
 \\
 \lesssim & \|Q(\text{Hess}\mu(\xi)-\text{Hess}\mu(\k_0))\|\cdot|x|^2
 \\
 \lesssim & \|\text{Hess}\mu(\xi)-\text{Hess}\mu(\k_0)\|\cdot|x|^2.
\end{split}
\ee
From the above two estimates and \eqref{pf, 3,1}, we can choose a small constant $r\in (0, r_0]$ such that
for $\xi\in \bbS^{n-1}\cap B(\k_0,r)$,
\ben\label{pf, 3,2}
\begin{split}
  x^TA(\xi)x\geq &x^TA(\k_0)x-|x^T(A(\xi)-A(\k_0))x|
  \\
  \geq &
  \min_{1\leq j\leq n}|\la_j|\cdot|x|^2-|x^T(A(\xi)-A(\k_0))x|
  \\
  \geq &
  \frac{1}{2}\min_{1\leq j\leq n}|\la_j|\cdot|x|^2.
\end{split}
\een
Denote by $\G$ the cone containing all vectors $\xi$ such that $\frac{\xi}{|\xi|}\in \bbS^{n-1}\cap B(\k_0,r)$.
By the assumption, we know that for every $|\g|=2$,
\be
\partial^{\g}\mu(\la\xi)= \la^{\b-2}\partial^{\g}\mu(\xi),\ \ \la\geq 1,\ \  \xi\in \bbS^{n-1}\cap B(\k_0,r),
\ee
which implies that
\be
A(\la\xi)=Q(\text{Hess}\mu(\la\xi))= \la^{\b-2}Q\text{Hess}\mu(\xi)=\la^{\b-2}A(\xi)
,\ \ \la\geq 1,\ \  \xi\in \bbS^{n-1}\cap B(\k_0,r).
\ee
From this and \eqref{pf, 3,2}, we know that for $\xi\in \G\bs B(0,1)$
\ben\label{pf, 3,3}
x^TA(\xi)x\geq |\xi|^{\b-2}\frac{1}{2}\min_{1\leq j\leq n}|\la_j|\cdot|x|^2\gtrsim |\xi|^{\b-2}|x|^2.
\een
Set
\be
E:= \{l\in \bbZ^n\bs\{0\}: \langle l\rangle^{\frac{2-\b}{\b-1}}l\in \G\},\ \  \xi_l:=\langle l\rangle^{\frac{2-\b}{\b-1}}l.
\ee
Next, we will proof that $\{\nabla \mu(\xi_l)\}_{l\in A}$ is a scattered set in the sense that
\be
|\nabla \mu(\xi_k)-\nabla \mu(\xi_l)|\geq C,\ \ \ k,l\in A,\ k\neq l.
\ee
For any two fixed point $\xi_l,\xi_k$
we set
\be
F(\th)=\nabla\mu(\xi_l+\th(\xi_k-\xi_l))\cdot (Q(\xi_k-\xi_l)).
\ee
A direct calculation yields that
\be
\begin{split}
  F'(\th)
  = &
  \big(\sum_{j=1}^n\partial_j\mu(\xi_l+\th(\xi_k-\xi_l))(Q(\xi_k-\xi_l))_j\big)'
  \\
  = &
  \sum_{i=1}^n\big(\sum_{j=1}^n\partial_{i,j}\mu(\xi_l+\th(\xi_k-\xi_l))(Q(\xi_k-\xi_l))_j\big)\cdot (\xi_k-\xi_l)_i
  \\
  = &
  \sum_{i=1}^n\sum_{j=1}^n\partial_{i,j}\mu(\xi_l+\th(\xi_k-\xi_l))(Q(\xi_k-\xi_l))_j\cdot (\xi_k-\xi_l)_i
  \\
  = &
  (Q(\xi_k-\xi_l))^T\text{Hess}\mu(\xi_k+\th(\xi_k-\xi_l))(\xi_k-\xi_l).
\end{split}
\ee
Observe that $(Q(\xi_k-\xi_l))^T=(\xi_k-\xi_l)^TQ^T=(\xi_k-\xi_l)^TQ$, we have
\be
\begin{split}
  F'(\th)= &(\xi_k-\xi_l)^TQ\text{Hess}\mu(\xi_l+\th(\xi_k-\xi_l))(\xi_k-\xi_l)
  \\
  = &
  (\xi_k-\xi_l)^TA(\xi_l+\th(\xi_k-\xi_l))(\xi_k-\xi_l).
\end{split}
\ee
Note that $\xi_l+\th(\xi_k-\xi_l)\in \G\bs B(0,1)$. We use \eqref{pf, 3,3} to further obtain
\be
F'(\th)\gtrsim |\xi_l+\th(\xi_k-\xi_l)|^{\b-2}|\xi_k-\xi_l|^2.
\ee
Next,
\ben\label{pf, 3,4}
\begin{split}
  F(1)-F(0)= &\int_0^1 F'(\th)d\th
  \\
  \gtrsim &
  |\xi_k-\xi_l|^2\int_0^1|\xi_l+\th(\xi_k-\xi_l)|^{\b-2}d\th.
\end{split}
\een
By the following identity
\be
4\left(\frac{|\xi_l|}{2|\xi_k-\xi_l|}\right)+\left(1-\frac{2|\xi_l|}{|\xi_k-\xi_l|}\right)=1,
\ee
we have
\be
\max\left\{4\left(\frac{|\xi_l|}{2|\xi_k-\xi_l|}\right), 1-\frac{2|\xi_l|}{|\xi_k-\xi_l|}\right\}\geq 1/2.
\ee
If $4\left(\frac{|\xi_l|}{2|\xi_k-\xi_l|}\right)\geq 1/2$, we have
\be
|\xi_l|\geq 2\th|\xi_k-\xi_l|,\ \ (\th\leq 1/8).
\ee
Then
\ben\label{pf, 3,5}
\begin{split}
  \int_0^1|\xi_l+\th(\xi_k-\xi_l)|^{\b-2}d\th
  \geq &
  \int_0^{1/8}|\xi_l+\th(\xi_k-\xi_l)|^{\b-2}d\th
  \\
  \gtrsim &
  \int_0^{1/8}|\xi_l|^{\b-2}d\th\sim |\xi_l|^{\b-2}.
\end{split}
\een
If $1-\frac{2|\xi_l|}{|\xi_k-\xi_l|}\geq 1/2$, we have
\be
\th|\xi_k-\xi_l|\geq 2|\xi_l|,\ \ \ (1/2\leq \th\leq 1).
\ee
Then
\ben\label{pf, 3,6}
\begin{split}
  \int_0^1|\xi_l+\th(\xi_k-\xi_l)|^{\b-2}d\th
  \geq &
  \int_{1/2}^{1}|\xi_l+\th(\xi_k-\xi_l)|^{\b-2}d\th
  \\
  \gtrsim &
  \int_{1/2}^{1}|\th(\xi_k-\xi_l)|^{\b-2}d\th
  \gtrsim
  |\xi_k-\xi_l|^{\b-2}.
  \end{split}
\een
The combination of \eqref{pf, 3,4}, \eqref{pf, 3,5} and \eqref{pf, 3,6} yields that
\be
  F(1)-F(0)\gtrsim |\xi_k-\xi_l|^2\min\{|\xi_l|^{\b-2}, |\xi_k-\xi_l|^{\b-2}\}.
\ee
On the other hand
\be
\begin{split}
  |F(1)-F(0)|
  = &
  |(\nabla \mu(\xi_k)-\nabla \mu(\xi_l))\cdot (Q(\xi_k-\xi_l))|
  \\
  \leq &
  |(\nabla \mu(\xi_k)-\nabla \mu(\xi_l))|\cdot |Q(\xi_k-\xi_l)|
  \\
  = &
  |(\nabla \mu(\xi_k)-\nabla \mu(\xi_l))|\cdot |\xi_k-\xi_l|.
\end{split}
\ee
It follows by the above two estimates that
\be
  |(\nabla \mu(\xi_k)-\nabla \mu(\xi_l))|\gtrsim |\xi_k-\xi_l|\min\{|\xi_l|^{\b-2}, |\xi_k-\xi_l|^{\b-2}\}.
\ee
Note $\b\in (1,2]$.
Take $\al=2-\b\in [0,1)$ in Lemma \ref{lemma, position}, there exists a constant $\d>0$ such that
\be
B(\xi_l, \d|\xi_l|^{2-\b})\cap B(\xi_k, \d|\xi_k|^{2-\b})=\emptyset,
\ee
which implies that
\be
|\xi_k-\xi_l|\gtrsim |\xi_l|^{2-\b}.
\ee
Hence,
\be
\begin{split}
  |\xi_k-\xi_l|\min\{|\xi_l|^{\b-2}, |\xi_k-\xi_l|^{\b-2}\}
  \gtrsim &
  \min\{|\xi_k-\xi_l||\xi_l|^{\b-2}, |\xi_k-\xi_l|^{\b-1}\}
  \\
  \gtrsim &
  \min\{|\xi_l|^{2-\b}|\xi_l|^{\b-2}, |\xi_k-\xi_l|^{\b-1}\}
  \\
  = &
  \min\{1, |\xi_k-\xi_l|^{\b-1}\}\gtrsim 1.
\end{split}
\ee
This completes the proof of Lemma \ref{lemma, scattered}.
\end{proof}

\begin{lemma}[Estimates of special functions]\label{lemma, estimates of special}
  Let $0<p, q\leq \infty$, $s\in \bbR$.
  Let $\mu$ be a real-valued functions satisfying the assumptions of Theorem \ref{thm, wiener, sharp potential loss},
  $E$ be the set mentioned in Lemma \ref{lemma, scattered}. For every
  fixed nonnegative truncated (only finite nonzero items) sequence $\{a_k\}_{k\in E}$, there exists
  a Schwartz function $F$ corresponding to $\{a_k\}_{k\in E}$, such that
  the following two estimates
  \bn
  \item $\|e^{i\mu(D)}F\|_{W^{p,q}}\sim \|\{a_k\}_{k\in E}\|_{l^p}$,
  \item $\|F\|_{W^{p,q}_s}\sim \|\{a_k\}_{k\in E}\|_{l^q_{s/(\b-1)}}$,
  \en
  hold uniformly for all $\{a_k\}_{k\in E}$.
\end{lemma}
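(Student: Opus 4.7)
The plan is to construct $F$ as a superposition of frequency-localized wave packets centered at the points $\xi_k$. Fix a Schwartz function $\phi$ with $\mathrm{supp}\,\phi\subset B(0,r_0)$ for a small $r_0>0$ to be chosen, and set
\[
\wh F(\xi):=\sum_{k\in E}a_k\phi(\xi-\xi_k).
\]
Since only finitely many $a_k$ are nonzero, $F$ is a Schwartz function. I would choose $r_0$ small enough that (i) the translated supports $B(\xi_k,r_0)$ are pairwise disjoint, which follows from Lemma \ref{lemma, position} taken with $\al=2-\b$ together with the definition of $E$; and (ii) the second-order Taylor remainder of $\mu$ at $\xi_k$ is uniformly controlled on $B(\xi_k,r_0)$, for which I would use the homogeneity assumption and $\b\le 2$ to get $|\pa^{\g}\mu(\xi)|\ls|\xi_k|^{\b-2}\ls 1$ for $|\g|=2$ and $\xi$ in that ball.

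For the bound on $\|F\|_{\wpqs}$, I would evaluate the short-time Fourier transform with a suitable Schwartz window $\p$: thanks to the disjoint frequency supports,
\[
V_{\p}F(x,\xi)=\sum_{k\in E}a_kH(x,\xi-\xi_k),
\]
where $H$ is Schwartz with rapid decay in its second argument. By the frequency separation of the $\xi_k$, the weight $\lan\xi\ran^s\sim\lan\xi_k\ran^s$ may be pulled out on each effective support; after the weighted $L^q_\xi$ norm and then the $L^p_x$ norm, the remaining $x$-dependent Schwartz factor contributes only an absolute constant. Since $|\xi_k|\sim\lan k\ran^{1/(\b-1)}$, this yields $\|F\|_{\wpqs}\sim\|\{a_k\}\|_{l^q_{s/(\b-1)}}$.

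For the bound on $\|e^{i\mu(D)}F\|_{\wpq}$, I would apply the Taylor expansion
\[
\mu(\xi)=\mu(\xi_k)+\na\mu(\xi_k)\cdot(\xi-\xi_k)+R_k(\xi)
\]
on each $B(\xi_k,r_0)$. The linear phase translates in physical space by $y_k:=-\na\mu(\xi_k)/(2\pi)$, while uniform boundedness (in $k$) of $R_k$ and its derivatives on $B(\xi_k,r_0)$ implies that the inverse Fourier transform of $e^{iR_k(\xi)}\phi(\xi-\xi_k)$ has Schwartz seminorms bounded uniformly in $k$. Consequently,
\[
e^{i\mu(D)}F(x)=\sum_{k\in E}a_ke^{i\mu(\xi_k)}e^{2\pi ix\cdot\xi_k}G_k(x-y_k),
\]
with $G_k$ Schwartz, uniformly in $k$. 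Lemma \ref{lemma, scattered} gives $|y_k-y_l|\gtrsim 1$ for $k\neq l$, so these wave packets are well separated in the time plane. Using the equivalent norm from Lemma \ref{lemma, equi norm of Wiener} and, if needed to align the scattered set with the standard lattice partition, the rotation invariance from Lemma \ref{lemma, rotation free}, the physical-space Schwartz decay decouples the sum: each lattice cube receives significant mass from only boundedly many wave packets, and collecting contributions gives $\|e^{i\mu(D)}F\|_{\wpq}\sim\|\{a_k\}\|_{l^p}$.

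The main technical obstacle I anticipate is the case $\b=2$, where $|R_k(\xi)|$ and its derivatives are only uniformly bounded (not small as $|\xi_k|\to\fy$), so $e^{iR_k}$ is a genuine perturbation of size $O(1)$ rather than close to $1$; one needs to verify that the perturbed packet $G_k$ still has Schwartz seminorms bounded \emph{uniformly} in $k$. A secondary issue is that the separation of $\{y_k\}$ is only by a fixed positive constant, so several $y_k$ may fall in a single unit cube of the Wiener amalgam decomposition; this is precisely where the rotation trick (or, equivalently, a constant refinement of the underlying lattice absorbed into the equivalence constants) is used, reducing the necessity of the bound to a purely discrete sequence inclusion.
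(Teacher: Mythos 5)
Your construction of $F$ as a superposition of disjointly supported frequency bumps $\widehat{h}(\cdot-\xi_k)$ is exactly the one the paper uses, and the two estimates for $\|F\|_{\wpqs}$ (disjoint supports in $\xi$, comparable weight $\lan\xi\ran^s\sim\lan\xi_k\ran^s\sim\lan k\ran^{s/(\b-1)}$ on each bump, rapid physical-space decay to close the $L^p_x$ norm) also match. The key divergence, and it is a genuine gap, is in how you pass from $F$ to $e^{i\mu(D)}F$.

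You Taylor-expand $\mu=\mu(\xi_k)+\na\mu(\xi_k)\cdot(\xi-\xi_k)+R_k$, split off the linear phase as a physical-space translation by $y_k=-\na\mu(\xi_k)/2\pi$, and then try to absorb $e^{iR_k}$ into the bump, asserting that $G_k:=\scrF^{-1}\bigl[e^{iR_k}\phi(\cdot-\xi_k)\bigr]$ has Schwartz seminorms bounded uniformly in $k$. This is where the argument breaks. The hypotheses of Theorem \ref{thm, wiener, sharp potential loss} only give $\mu\in C^2(\rn\bs\{0\})$ with $(1-\r_0)\pa^{\g}\mu\in W^{\fy,\fy}_{n/(\dot p\wedge\dot q)+\ep}$ for $|\g|=2$. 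There is no pointwise control on third- or higher-order derivatives of $\mu$, so $e^{iR_k}\phi$ is only a $C^2$ function, its inverse Fourier transform $G_k$ need not decay faster than $|x|^{-2}$, and no uniform Schwartz bound can hold. Your own remark about $\b=2$ being delicate misdiagnoses the danger: the problem is not that $R_k$ is only $O(1)$ rather than small, it is that $R_k$ is only twice differentiable. Without rapid decay of the $G_k$ the decoupling of the sum in the $W^{p,q}$ norm does not close.

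The paper avoids this entirely by a multiplier factorization: $e^{i\mu}\widehat F=m\cdot\widehat G$ with $\widehat G(\xi)=\sum_k a_k\widehat{h_k}(\xi)e^{i\na\mu(\xi_k)\cdot\xi}$ (no Taylor remainder at all; $G$ is a sum of \emph{unperturbed} translated Schwartz bumps) and $m(\xi)=\sum_k\widehat{h^*_k}(\xi)e^{i\mu(\xi)-i\na\mu(\xi_k)\cdot\xi}$. Then $\|m\|_{W^{\fy,\fy}_{n/(\dot p\wedge\dot q)+\ep}}\lesssim 1$ is proved by the algebra property of $W^{\fy,\fy}_s$, the exponential-series trick from Section~3, and the dilation Lemma \ref{lemma, dila of Wfy}; this only touches $\pa^{\g}\mu$ with $|\g|=2$ in the $W^{\fy,\fy}_s$ norm and so works at exactly the stated regularity. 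Since $\|m\|_{M^{\fy,\dot p\wedge\dot q}}\lesssim\|m\|_{W^{\fy,\fy}_{n/(\dot p\wedge\dot q)+\ep}}$ and the same bound holds for $\overline{m}$, one gets the two-sided comparison $\|e^{i\mu(D)}F\|_{\wpq}\sim\|G\|_{\wpq}$, after which the scattered-set Lemma \ref{lemma, scattered} (with the dyadic counting $|E_{j,x}|\lesssim 2^{jn}$) gives $\|G\|_{\wpq}\sim\|\{a_k\}\|_{l^p}$. To repair your proof, replace the uniform-Schwartz claim for $G_k$ with this bounded-multiplier step.

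One further correction: the rotation invariance (Lemma \ref{lemma, rotation free}) is not used to compensate for the scattering constant being merely $\gtrsim 1$, which is an issue of absolute constants that the paper handles with the $E_{j,x}$ counting. It is invoked later, in Lemma \ref{lemma, rotation}, to pass from the cone-restricted index set $E$ to all of $\zn$ by covering $\zn\bs\{0\}$ with finitely many rotated copies $E_{P_i}$.
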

\begin{proof}
  We only give the proof for $p,q<\infty$, since the other cases can be handled similarly with some slight modification.
  Denote by $\xi_k:=\langle k\rangle^{\frac{2-\b}{\b-1}}k$.
  It follows by Lemma \ref{lemma, scattered} that
  there exists a constant $R>0$ such that the family $\{B(\nabla\mu(\xi_k), R)\}_{k\in E}$ is pairwise disjoint.
  In additional, by Lemma \ref{lemma, position}, there exists a positive constant $r<1/2$ such that
  the family $\{B(\xi_k, r)\}_{k\in \zn}$ is pairwise disjoint.
Take $\widehat{h}, \widehat{h^*}$ be two nonnegative real-valued radial smooth functions satisfying
\be
\widehat{h}(\xi)=\widehat{h^{*}}(\xi)=1\ \text{on}\ B(0,r/8),\ \
\text{supp}\widehat{h}\subset \text{supp}\widehat{h^*}\subset B(0,r/4),\ \ \widehat{h}\cdot \widehat{h^*}=\widehat{h}.
\ee
Denote by $\widehat{h_k}(\xi):= \widehat{h}(\xi-\xi_k)$ and $\widehat{h^*_k}(\xi):= \widehat{h^*}(\xi-\xi_k)$.
For any nonnegative truncated (only finite nonzero items) sequence $\{a_k\}_{k\in A}$, we set
\be
\widehat{F}:=\sum_{k\in A}a_k\widehat{h_k}.
\ee
Write
\be
\begin{split}
  e^{i\mu}\widehat{F}
  = &
  e^{i\mu}\sum_{k\in A}a_k\widehat{h_k}
  =
  e^{i\mu}\sum_{k\in A}a_k\widehat{h_k}\widehat{h^*_k}
  \\
  = &
  \left(\sum_{k\in A}\widehat{h^*_k}(\xi)e^{i\mu(\xi)-i\nabla \mu(\xi_k)\xi}\right)
  \cdot \left(\sum_{k\in A}a_k\widehat{h_k}(\xi)e^{i\nabla\mu(\xi_k)\xi}\right)
  \\
  = :& m(\xi)\cdot \widehat{G}(\xi).
\end{split}
\ee
We claim that $\|m\|_{W^{\fy,\fy}_{n/(\dot{p}\wedge \dot{q})+\ep}}\lesssim 1$.
As in the proof of Theorem \ref{thm, wiener, potential persistence}, we write
\be
\begin{split}
  \|m\|_{W^{\fy,\fy}_{n/(\dot{p}\wedge \dot{q})+\ep}}
  = &
  \sup_{k\in A}\|\widehat{h^*_k}(\xi)e^{i\mu(\xi)-i\nabla \mu(\xi_k)\xi}\|_{W^{\fy,\fy}_{n/(\dot{p}\wedge \dot{q})+\ep}}
  \\
  = &
  \sup_{k\in A}\|\widehat{h^*_k}(\xi)e^{i\mu(\xi)-i\nabla \mu(\xi_k)\xi-i\mu(\xi_k)}\|_{W^{\fy,\fy}_{n/(\dot{p}\wedge \dot{q})+\ep}}
  \\
  = &
  \sup_{k\in A}\|\widehat{h^*}(\xi)e^{i\mu(\xi+\xi_k)-i\nabla \mu(\xi_k)\xi-i\mu(\xi_k)}\|_{W^{\fy,\fy}_{n/(\dot{p}\wedge \dot{q})+\ep}}
  \\
  = &
  \sup_{k\in A}\|\widehat{h^*}(\xi)e^{iR_k(\xi)}\|_{W^{\fy,\fy}_{n/(\dot{p}\wedge \dot{q})+\ep}}.
\end{split}
\ee
where
\be
R_k(\xi)=\mu(\xi_k+\xi)-\mu(\xi_k)-\nabla \mu(\xi_k)\xi
=\sum_{|\g|=2}\frac{2\xi^{\g}}{\g!}\int_0^1(1-t)\partial^{\g}\mu(\xi_k+t\xi)dt.
\ee
Take $\psi$ to be a $C_c^{\fy}(\rn)$ function supported on $B(0,1/2)$
such that $\psi=1$ on $B(0,1/4)$,
then $\psi\widehat{h^*}=\widehat{h^*}$.
By the similar argument as in the proof of Theorem \ref{thm, wiener, potential persistence},
\be
\begin{split}
  &\|\widehat{h^*}(\xi)e^{iR_k(\xi)}\|_{W^{\fy,\fy}_{n/(\dot{p}\wedge \dot{q})+\ep}}
  =
  \|\widehat{h^*}(\xi)e^{i\psi(\xi)R_k(\xi)}\|_{W^{\fy,\fy}_{n/(\dot{p}\wedge \dot{q})+\ep}}
  \\
  = &
  \left\|\widehat{h^*}(\xi)
  \exp\left(i\psi(\xi)\sum_{|\g|=2}\frac{2\xi^{\g}}{\g!}\int_0^1(1-t)\psi(t\xi)\partial^{\g}\mu(\xi_k+t\xi)dt\right)\right\|_{W^{\fy,\fy}_{n/(\dot{p}\wedge \dot{q})+\ep}}
  \\
  \lesssim &
  \exp\left(C\left\|\psi(\xi)\sum_{|\g|=2}\frac{2\xi^{\g}}{\g!}\int_0^1(1-t)\psi(t\xi)\partial^{\g}\mu(\xi_k+t\xi)dt\right\|_{W^{\fy,\fy}_{n/(\dot{p}\wedge \dot{q})+\ep}}\right)
  \\
  \lesssim &
  \exp\left(C\left\|\sum_{|\g|=2}\int_0^1(1-t)\psi(t\xi)\partial^{\g}\mu(\xi_k+t\xi)dt\right\|_{W^{\fy,\fy}_{n/(\dot{p}\wedge \dot{q})+\ep}}\right)
  \\
  \lesssim &
  \exp\left(C\sum_{|\g|=2}\|\psi(\cdot-\xi_k)\partial^{\g}\mu(\cdot)\|_{W^{\fy,\fy}_{n/(\dot{p}\wedge \dot{q})+\ep}}\right)
\end{split}
\ee
Together with this and the following estimate
\be
 \begin{split}
   \sum_{|\g|=2}\|\psi(\cdot-\xi_k)\partial^{\g}\mu(\cdot)\|_{W^{\fy,\fy}_{n/(\dot{p}\wedge \dot{q})+\ep}}
   = &
   \sum_{|\g|=2}\|\psi(\cdot-\xi_k)(1-\rho_0)\partial^{\g}\mu(\cdot)\|_{W^{\fy,\fy}_{n/(\dot{p}\wedge \dot{q})+\ep}}
   \\
   \lesssim &
   \sum_{|\g|=2}\|\psi(\cdot-\xi_k)\|_{W^{\fy,\fy}_{n/(\dot{p}\wedge \dot{q})+\ep}}
   \cdot \|(1-\rho_0)\partial^{\g}\mu(\cdot)\|_{W^{\fy,\fy}_{n/(\dot{p}\wedge \dot{q})+\ep}}
   \\
   = &
   \sum_{|\g|=2}\|\psi\|_{W^{\fy,\fy}_{n/(\dot{p}\wedge \dot{q})+\ep}}
   \cdot \|(1-\rho_0)\partial^{\g}\mu\|_{W^{\fy,\fy}_{n/(\dot{p}\wedge \dot{q})+\ep}}
   \\
   \lesssim & \sum_{|\g|=2}\|(1-\rho_0)\partial^{\g}\mu\|_{W^{\fy,\fy}_{n/(\dot{p}\wedge \dot{q})+\ep}},
 \end{split}
\ee
the desired estimate follows by
\be
 \begin{split}
   \|m\|_{W^{\fy,\fy}_{n/(\dot{p}\wedge \dot{q})+\ep}}
   = &
   \sup_{k\in \zn}\|\widehat{h^*}(\xi)e^{iR_k(\xi)}\|_{W^{\fy,\fy}_{n/(\dot{p}\wedge \dot{q})+\ep}}
   \\
   \lesssim &
   \sup_{k\in \zn}\exp\left(C\sum_{|\g|=2}\|\psi(\cdot-\xi_k)\partial^{\g}\mu(\cdot)\|_{W^{\fy,\fy}_{n/(\dot{p}\wedge \dot{q})+\ep}}\right)
   \\
   \lesssim &
   \sup_{k\in \zn}\exp\left(C\sum_{|\g|=2}\|(1-\rho_0)\partial^{\g}\mu\|_{W^{\fy,\fy}_{n/(\dot{p}\wedge \dot{q})+\ep}}\right)\lesssim 1,
 \end{split}
\ee
where in the last estimate we use the assumption that $(1-\r_0)\partial^{\g}\mu \in W^{\fy,\fy}_{n/(\dot{p}\wedge \dot{q})+\ep}$ for $|\g|=2$.
Hence,
\ben\label{pf, 3,7}
\begin{split}
  \|e^{i\mu(D)}F\|_{\wpq}
  = &
  \|m(D)G\|_{\wpq}
  \\
  \lesssim &
  \|\scrF^{-1}m\|_{W^{\dot{p}\wedge \dot{q},\fy}}\|G\|_{\wpq}
  \\
  = &
  \|m\|_{M^{\fy,\dot{p}\wedge \dot{q}}}\|G\|_{\wpq}\lesssim \|m\|_{W^{\fy,\fy}_{n/(\dot{p}\wedge \dot{q})+\ep}}\|G\|_{\wpq}\lesssim \|G\|_{\wpq}.
\end{split}
\een
On the other hand, observe
\be
\begin{split}
  \widehat{G}
  = &
  \sum_{k\in A}a_k\widehat{h_k}(\xi)e^{i\nabla\mu(\xi_k)\xi}
  \\
  = &
  \left(\sum_{k\in A}\widehat{h^*_k}(\xi)e^{-i\mu(\xi)+i\nabla \mu(\xi_k)\xi}\right)\cdot
  \left(e^{i\mu}\sum_{k\in A}a_k\widehat{h_k}\right)=\overline{m}e^{i\mu}\widehat{F}.
\end{split}
\ee
A similar argument yields that
\be
\|\overline{m}\|_{W^{\fy,\fy}_{n/(\dot{p}\wedge \dot{q})+\ep}}\lesssim 1,
\ee
and
\ben\label{pf, 3,8}
\|G\|_{\wpq}\lesssim \|\overline{m}\|_{W^{\fy,\fy}_{n/(\dot{p}\wedge \dot{q})+\ep}}\|e^{i\mu(D)}F\|_{\wpq}
\lesssim \|e^{i\mu(D)}F\|_{\wpq}.
\een
Then  $\|G\|_{\wpq}\sim \|e^{i\mu(D)}F\|_{\wpq}$ follows by \eqref{pf, 3,7} and \eqref{pf, 3,8}.
Let us turn to the estimate of $\|G\|_{\wpq}$.

\textbf{Lower estimate of $\|G\|_{\wpq}$.}
Take $\widehat{\phi}$ to be a real-valued radial function supported on $B(0,3r/4)$ and satisfying
\be
\widehat{\phi}(\xi)=1,\ \ \xi\in B(0,r/2).
\ee
Recalling $\text{supp}\widehat{h_k}\subset B(\xi_k,r/4)$, and the disjoint property of $\{B(\xi_k, r)\}_{k\in \zn}$,
for $\xi\in B(\xi_l, r/4)$ we have
\be
\widehat{\phi}(\eta-\xi)\widehat{h_k}(\eta)\ \text{equals to}\ \widehat{h_l}(\eta)\ \ \text{when}\  k=l,\ \text{and vanishes}\ \text{when}\  k\neq l.
\ee
Hence, for $\xi\in B(\xi_l, r/4)$,
\be
\begin{split}
  \widehat{G}(\eta)\widehat{\phi}(\eta-\xi)
  = &
  \sum_{k\in A}a_k\widehat{h_k}(\eta)e^{i\nabla\mu(\xi_k)\eta}\widehat{\phi}(\eta-\xi)
  \\
  = &
  a_l\widehat{h_l}(\eta)e^{i\nabla\mu(\xi_l)\eta}
\end{split}
\ee
From this and a direct calculation, for $\xi\in B(\xi_l, r/4)$ we further have
\be
\begin{split}
  \left|V_{\phi}G(x,\xi)\right|
  = &
  \left|\int_{\rn}\widehat{G}(\eta)\widehat{\phi}(\eta-\xi)e^{2\pi i\eta\cdot x}d\eta\right|
  \\
  = &
  \left|a_l\int_{\rn}\widehat{h_l}(\eta)e^{i\nabla\mu(\xi_l)\eta}e^{2\pi i\eta\cdot x}d\eta\right|
  \\
  = &
  \left|a_l\int_{\rn}\widehat{h}(\eta)e^{i\nabla\mu(\xi_l)\eta}e^{2\pi i\eta\cdot x}d\eta\right|
  =
  a_l|h(x+\nabla\mu(\xi_l)/2\pi)|.
\end{split}
\ee
It follows that
\be
\begin{split}
  \left\|V_{\phi}G(x,\xi)\right\|_{L^q_{\xi}}
  \geq &
  \left(\sum_{k\in A}\int_{B(\xi_k,r/4)}\left|V_{\phi}G(x,\xi)\right|^qd\xi\right)^{1/q}
  \\
  \sim &
  \left(\sum_{k\in A}a_k^q|h(x+\nabla\mu(\xi_k)/2\pi)|^q\right)^{1/q}.
\end{split}
\ee
By Lemma \ref{lemma, scattered}, we know that
the family $\{B(-\nabla\mu(\xi_l)/2\pi, R/2\pi)\}_{k\in A}$ is pairwise disjoint.
Then the desired lower estimate follows by
\be
\begin{split}
  \|G\|_{\wpq}
  = &
  \|\left\|V_{\phi}G(x,\xi)\right\|_{L^q_{\xi}}\|_{L^p_x}
  \\
  \gtrsim &
  \left\|\left(\sum_{k\in A}a_k^q|h(x+\nabla\mu(\xi_k)/2\pi)|^q\right)^{1/q}\right\|_{L^p_x}
  \\
  \geq &
  \left(\sum_{l\in A}\int_{B(-\nabla\mu(\xi_l)/2\pi, R/2\pi)}\left(\sum_{k\in \zn}a_k^q|h(x+\nabla\mu(\xi_k)/2\pi)|^q\right)^{p/q}\right)^{1/p}
  \\
  \geq &
  \left(\sum_{l\in A}\int_{B(-\nabla\mu(\xi_l)/2\pi, R/2\pi)}\left(a_l^q|h(x+\nabla\mu(\xi_l)/2\pi)|^q\right)^{p/q}\right)^{1/p}
  \\
  = &
  \left(\sum_{l\in A}\int_{B(0, R/2\pi)}a_l^p|h(x)|^p\right)^{1/p}\sim \|\{a_k\}_{k\in A}\|_{l^p}.
\end{split}
\ee

\textbf{Upper estimate of $\|G\|_{\wpq}$.}
By the definition of $h_k$ and $\phi$, we know that for any $k\in A$,
\be
\{\xi\in \rn: \widehat{h_k}\widehat{\phi}(\eta-\xi)\neq 0\}\subset B(\xi_k, r).
\ee
Recalling that the family $\{B(\xi_k,r)\}_{k\in \zn}$ is pairwise disjoint,
for $\xi\in B(\xi_k,r)$ we have,
\be
\begin{split}
  \left|V_{\phi}G(x,\xi)\right|
  = &
  \left|\int_{\rn}\widehat{G}(\eta)\widehat{\phi}(\eta-\xi)e^{2\pi i\eta\cdot x}d\eta\right|
  \\
  = &
  \left|a_k\int_{\rn}\widehat{h_k}(\eta)\widehat{\phi}(\eta-\xi)e^{i\nabla\mu(\xi_k)\eta}e^{2\pi i\eta\cdot x}d\eta\right|
  \\
  = &
  \left|a_k\int_{\rn}\widehat{h_k}(\eta)\widehat{M_{\xi}\phi}(\eta)e^{i\nabla\mu(\xi_k)\eta}e^{2\pi i\eta\cdot x}d\eta\right|
  \\
  = &
  a_k\left|h_k\ast M_{\xi}\phi(x+\nabla\mu(\xi_k)/2\pi)\right|
  \\
  \lesssim &
  a_k\left|(h_k|\ast |M_{\xi}\phi|)(x+\nabla\mu(\xi_k)/2\pi)\right|
  \\
  = &
  a_k\left|h|\ast |\phi|(x+\nabla\mu(\xi_k)/2\pi)\right|\lesssim a_k\lan x+\nabla\mu(\xi_k)/2\pi\ran^{-\scrL},
\end{split}
\ee
where in the last inequality we use the fact that both $h$ and $\phi$ are Schwartz functions, $\scrL$ indicates a sufficiently large number.
It follows that
\ben\label{pf, 3,10}
\begin{split}
  \left\|V_{\phi}G(x,\xi)\right\|_{L^q}
  = &
  \left(\sum_{k\in E}\int_{B(\xi_k,r)}\left|V_{\phi}G(x,\xi)\right|^qd\xi\right)^{1/q}
  \\
  \lesssim &
  \left(\sum_{k\in E}\int_{B(\xi_k,r)}a_k^q\lan x+\nabla\mu(\xi_k)/2\pi\ran^{-\scrL} d\xi\right)^{1/q}
  \\
  \lesssim &
  \left(\sum_{k\in E}a_k^q\lan x+\nabla\mu(\xi_k)/2\pi\ran^{-\scrL} \right)^{1/q}
  \\
  \lesssim &
  \sup_{k\in E}a_k\lan x+\nabla\mu(\xi_k)/2\pi\ran^{-\scrL}
  \left(\sum_{k\in E}\lan x+\nabla\mu(\xi_k)/2\pi\ran^{-\scrL} \right)^{1/q}.
\end{split}
\een
By Lemma \ref{lemma, scattered}, we know that
\be
|E_{0,x}|:=|\{k\in \zn: |x+\nabla\mu(\xi_k)/2\pi|\leq 1 \}|\lesssim 1
\ee
and
\be
|E_{j,x}|:=|\{k\in \zn: 2^{j-1}\leq |x+\nabla\mu(\xi_k)/2\pi|\leq 2^{j}\}|\lesssim 2^{jn},
\ee
uniformly for all $x\in \rn$ and $j\geq 1$.
From this, we have
\be
\begin{split}
  \sum_{k\in E}\lan x+\nabla\mu(\xi_k)/2\pi\ran^{-\scrL}
  \lesssim &
  \sum_{j=0}^{\fy}\sum_{k\in E_{j,x}}\lan x+\nabla\mu(\xi_k)/2\pi\ran^{-\scrL}
  \\
  \lesssim &
  \sum_{j=0}^{\fy}|E_{j,x}|2^{-jn\scrL}
  \lesssim
  \sum_{j=0}^{\fy}2^{jn}2^{-jn\scrL}\lesssim 1.
\end{split}
\ee
This and \eqref{pf, 3,10} imply that
\be
\begin{split}
  \left\|V_{\phi}G(x,\xi)\right\|_{L^q}
  \lesssim
  \sup_{k\in E}a_k\lan x+\nabla\mu(\xi_k)/2\pi\ran^{-\scrL}.
\end{split}
\ee
Then,
\be
\begin{split}
  \|G\|_{\wpq}
  = &
  \left\|\left\|V_{\phi}G(x,\xi)\right\|_{L^q_{\xi}}\right\|_{L^p_x}
  \\
  \lesssim &
  \left\|\sup_{k\in E}a_k\lan \cdot+\nabla\mu(\xi_k)/2\pi\ran^{-\scrL}\right\|_{L^p}
  \\
  \lesssim &
  \left\|\left(\sum_{k\in E}a_k^p\lan \cdot+\nabla\mu(\xi_k)/2\pi\ran^{-\scrL p}\right)^{1/p}\right\|_{L^p}
  \\
  = &
\left(\sum_{k\in E}a_k^p\left\|\lan \cdot+\nabla\mu(\xi_k)/2\pi\ran^{-\scrL}\right\|^p_{L^p_x}\right)^{1/p}
  \lesssim
  \left(\sum_{k\in E}a_k^p\right)^{1/p}.
\end{split}
\ee
Next, we consider the estimate of $\|F\|_{W^{p,q}_s}$.

\textbf{Lower estimate of $\|F\|_{\wpqs}$.}

By the definition of $h_k$ and $\phi$, we know that for $\xi\in B(\xi_l, r/4)$,
\be
\begin{split}
  \widehat{F}(\eta)\widehat{\phi}(\eta-\xi)
  = &
  \sum_{k\in E}a_k\widehat{h_k}(\eta)\widehat{\phi}(\eta-\xi)
  =
  a_l\widehat{h_l}(\eta).
\end{split}
\ee
Then for $\xi\in B(\xi_l, r/4)$,
\be
\begin{split}
  \left|V_{\phi}F(x,\xi)\right|
  = &
  \left|\int_{\rn}\widehat{F}(\eta)\widehat{\phi}(\eta-\xi)e^{2\pi i\eta\cdot x}d\eta\right|
  \\
  = &
  \left|a_l\int_{\rn}\widehat{h_l}(\eta)e^{2\pi i\eta\cdot x}d\eta\right|
  =a_l|h_l(x)|=a_l|h(x)|.
\end{split}
\ee
Form this we further deduce that
\be
\begin{split}
  \left\|V_{\phi}F(x,\xi)\right\|_{L^{q}_{\xi,s}}
  \geq &
  \left(\sum_{k\in E}\int_{B(\xi_k,r/4)}\left|V_{\phi}F(x,\xi)\right|^q
  \lan \xi\ran^{sq}d\xi\right)^{1/q}
  \\
  \sim &
  \left(\sum_{k\in E}a_k^q\lan \xi_k\ran^{sq}\right)^{1/q}|h(x)|
  \sim
  \left(\sum_{k\in E}a_k^q\lan k\ran^{sq/(\b-1)}\right)^{1/q}|h(x)|.
\end{split}
\ee
Then the desired estimate follows by
\be
\begin{split}
  \|F\|_{\wpqs}
  =
  \|\left\|V_{\phi}F(x,\xi)\right\|_{L^{q}_{\xi,s}}\|_{L^p_x}
  \gtrsim &
  \left(\sum_{k\in E}a_k^q\lan k\ran^{sq/(\b-1)}\right)^{1/q}\left\|h\right\|_{L^p}
  \sim \|\{a_k\}_{k\in E}\|_{l^q_{s/(\b-1)}}.
\end{split}
\ee

\textbf{Upper estimate of $\|F\|_{\wpqs}$.}
Next, we consider the estimate of $\|F\|_{W^{p,q}_s}$.
By the definition of $h_k$ and $\phi$, we know that for any $k\in A$,
\be
\{\xi\in \rn: \widehat{h_k}\widehat{\phi}(\eta-\xi)\neq 0\}\subset B(\xi_k, r),
\ee
where the family $\{B(\xi_k,r)\}_{k\in \zn}$ is pairwise disjoint.
For $\xi\in B(\xi_k,r)$,
\be
\begin{split}
  \left|V_{\phi}F(x,\xi)\right|
  = &
  \left|\int_{\rn}\widehat{F}(\eta)\widehat{\phi}(\eta-\xi)e^{2\pi i\eta\cdot x}d\eta\right|
  \\
  = &
  \left|a_k\int_{\rn}\widehat{h_k}(\eta)\widehat{\phi}(\eta-\xi)e^{2\pi i\eta\cdot x}d\eta\right|
  \\
  = &
  \left|a_k\int_{\rn}\widehat{h_k}(\eta)\widehat{M_{\xi}\phi}(\eta)e^{2\pi i\eta\cdot x}d\eta\right|
  \\
  = &
  a_k\left|h_k\ast M_{\xi}\phi(x)\right|.
\end{split}
\ee
For the last term, we further have
\be
\begin{split}
  a_k\left|h_k\ast M_{\xi}\phi(x)\right|
  \lesssim
  a_k|h_k|\ast |M_{\xi}\phi|(x)
  = a_k|h|\ast |\phi|(x)
  \lesssim
  a_k\langle x\rangle^{-\scrL},
\end{split}
\ee
where in the last inequality we use the fact that both $h$ and $\phi$ are Schwartz functions, $\scrL$ indicates a sufficiently large number.

Now, we have the following estimate
\be
\begin{split}
  \left\|V_{\phi}F(x,\xi)\right\|_{L^q_{s}}^q
  = &
  \sum_{k\in E}\int_{B(\xi_k,r)}\left|V_{\phi}F(x,\xi)\right|^q\lan \xi\ran^{sq}d\xi
  \\
  \lesssim &
  \sum_{k\in E}\int_{B(\xi_k,r)}a_k^q\langle x\rangle^{-q\scrL}\lan \xi\ran^{sq} d\xi
  \\
  \lesssim &
  \sum_{k\in E}a_k^q\lan \xi_k\ran^{sq} \langle x\rangle^{-q\scrL}
  \lesssim
  \sum_{k\in E}a_k^q\lan k\ran^{\frac{sq}{\b-1}}\langle x\rangle^{-q\scrL}.
\end{split}
\ee
Then,
\be
\begin{split}
  \|F\|_{\wpq}
  = &
  \left\|\left\|V_{\phi}F(x,\xi)\right\|_{L^{q}_{\xi,s}}\right\|_{L^p_x}
  \\
  \lesssim &
  \left\|\left(\sum_{k\in E}a_k^q\lan k\ran^{\frac{sq}{\b-1}}\langle \cdot\rangle^{-q\scrL}\right)^{1/q}\right\|_{L^p}
  \\
  = &
  \left\|\left(\sum_{k\in E}a_k^q\lan k\ran^{\frac{sq}{\b-1}}\right)^{1/q}\langle \cdot\rangle^{-\scrL}\right\|_{L^p}
  \sim \|\{a_k\}_{k\in E}\|_{l^q_{s/(\b-1)}}.
\end{split}
\ee
\end{proof}

\begin{lemma}[Rotation trick]\label{lemma, rotation}
  Let $\mu$ be a real-valued functions satisfying the assumptions of Theorem \ref{thm, wiener, sharp potential loss},
  For every nonnegative sequence $\{a_k\}_{k\in \zn}$, we have
  \be
  \|\{a_k\}_{k\in \zn}\|_{l^p}\lesssim \|\{a_k\}_{k\in \zn}\|_{l^q_{s/(\b-1)}},
  \ \
  \|\{a_k\}_{k\in \zn}\|_{l^q_{-s/(\b-1)}}\lesssim \|\{a_k\}_{k\in \zn}\|_{l^p}.
  \ee
\end{lemma}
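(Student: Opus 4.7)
The plan is to prove both inequalities by feeding carefully chosen test functions into the boundedness hypothesis $e^{i\mu(D)}: W^{p,q}_s \to W^{p,q}$ (which is implicit here, being the hypothesis of Theorem \ref{thm, wiener, sharp potential loss} that the lemma is designed to exploit). The first inequality follows from Lemma \ref{lemma, estimates of special} almost immediately; the second requires a dual construction of test functions that are localized in space rather than in frequency. In both cases, a rotation argument is needed to extend the scattered set from the cone $\G$ to all of $\bbZ^n$.

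For the rotation argument, I would pick finitely many orthogonal matrices $P_1,\ldots,P_N$ so that $\bigcup_j P_j\G$ covers $\bbR^n$ outside some bounded ball. For each $j$, the rotated function $\mu_j(\xi):=\mu(P_j^{-1}\xi)$ still satisfies the hypotheses of Theorem \ref{thm, wiener, sharp potential loss}, with the non-degenerate Hessian point now at $P_j\k_0$ and associated cone $P_j\G$; by Lemma \ref{lemma, rotation free}, the boundedness of $e^{i\mu(D)}$ transfers to $e^{i\mu_j(D)}$. Splitting $\{a_k\}$ along a partition of $\bbZ^n$ by which $P_j\G$ the point $\langle k\rangle^{(2-\b)/(\b-1)}k$ lies in, the first inequality follows from Lemma \ref{lemma, estimates of special} applied to each $\mu_j$, giving $\|\{a_k\}_{k\in E_j}\|_{l^p}\lesssim \|\{a_k\}_{k\in E_j}\|_{l^q_{s/(\b-1)}}$, and summing the finitely many pieces (with care for the quasi-norm case when $p$ or $q<1$).

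For the second inequality, I would introduce a dual family of test functions concentrated in space. For each rotation and each scattered $\xi_k=\langle k\rangle^{(2-\b)/(\b-1)}P_jk$, set $y_k:=\nabla\mu(\xi_k)/(2\pi)$ and
\be
F_j(x):=\sum_{k\in E_j} a_k\, h^*(x-y_k)\, e^{2\pi i\xi_k(x-y_k)},
\ee
where $h^*$ is the Schwartz window from the proof of Lemma \ref{lemma, estimates of special}. A direct STFT computation, using that both $\{y_k\}$ and $\{\xi_k\}$ are scattered (Lemma \ref{lemma, scattered}) and that $\langle\xi_k\rangle\sim\langle k\rangle^{1/(\b-1)}$, yields $\|F_j\|_{W^{p,q}_s}\sim \|\{a_k\langle k\rangle^{s/(\b-1)}\}\|_{l^p}$. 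On the other hand, Taylor-expanding $\mu$ around each $\xi_k$ and cancelling phases against $e^{-2\pi iy_k\xi}=e^{-i\nabla\mu(\xi_k)\xi}$ turns $e^{i\mu(D)}F_j$ into an approximate sum $\sum_k a_k e^{ic_k}M_{\xi_k}h^*$ of pure frequency modulations, so that $\|e^{i\mu(D)}F_j\|_{W^{p,q}}\sim \|\{a_k\}\|_{l^q}$. Plugging into the boundedness hypothesis and summing over $j$ gives $\|\{a_k\}\|_{l^q}\lesssim \|\{a_k\langle k\rangle^{s/(\b-1)}\}\|_{l^p}$, which after the substitution $a_k\mapsto a_k\langle k\rangle^{-s/(\b-1)}$ is precisely the second inequality.

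The main obstacle is verifying the multiplier estimate for the dual construction: one must control the symbol $e^{i\mu(\xi)}e^{-2\pi iy_k\xi}$ on the support of $\widehat{h^*}(\xi-\xi_k)$ in $W^{\infty,\infty}_{n/(\dot p\wedge\dot q)+\ep}$ uniformly in $k$. This parallels the argument bounding the auxiliary multiplier $m$ in the proof of Lemma \ref{lemma, estimates of special}, and the regularity assumption $(1-\r_0)\partial^\g\mu\in W^{\infty,\infty}_{n/(\dot p\wedge\dot q)+\ep}$ for $|\g|=2$ is exactly what is needed to close the Taylor-remainder estimate.
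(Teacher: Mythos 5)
Your proposal is correct, and for the first inequality and the rotation scheme it matches the paper's argument exactly: cover $\bbZ^n\setminus\{0\}$ by finitely many rotated copies $E_{P_j}$ of the scattered set $E$, use Lemma~\ref{lemma, rotation free} to transfer the boundedness hypothesis to each $e^{i\mu_{P_j}(D)}$, apply Lemma~\ref{lemma, estimates of special}, and sum. For the second inequality, however, you take a genuinely different and noticeably longer route than the paper. The paper simply observes two things: first, Lemma~\ref{lemma, estimates of special}(2) holds for \emph{every} $s\in\bbR$ with the \emph{same} test function $F$, so in particular $\|F\|_{W^{p,q}_{-s}}\sim\|\{a_k\}_{k\in E}\|_{l^q_{-s/(\b-1)}}$; second, the hypothesis ``$e^{i\mu(D)}:W^{p,q}_s\to W^{p,q}$ bounded'' is equivalent to ``$\langle\xi\rangle^{-s}e^{i\mu(\xi)}$ bounded on $W^{p,q}$,'' and since a Fourier multiplier and its complex conjugate have comparable $W^{p,q}$-multiplier norms, $\langle\xi\rangle^{-s}e^{-i\mu(\xi)}$ is also bounded, which after the substitution $g=e^{i\mu(D)}f$ gives $\|f\|_{W^{p,q}_{-s}}\lesssim\|e^{i\mu(D)}f\|_{W^{p,q}}$. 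Feeding the same $F$ into this reverse inequality yields $\|\{a_k\}\|_{l^q_{-s/(\b-1)}}\lesssim\|\{a_k\}\|_{l^p}$ directly; no new test function is needed.

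Your ``dual'' construction $F_j(x)=\sum_k a_kh^*(x-y_k)e^{2\pi i\xi_k(x-y_k)}$ does work: on the Fourier side it is $\widehat{F_j}(\xi)=\sum_k a_k e^{-i\nabla\mu(\xi_k)\xi}\widehat{h^*}(\xi-\xi_k)$, and the same Taylor-remainder multiplier estimate as in Lemma~\ref{lemma, estimates of special} turns $e^{i\mu(D)}F_j$ into $\sum_ka_ke^{ic_k}M_{\xi_k}h^*$ up to a harmless $W^{\infty,\infty}_{n/(\dot p\wedge\dot q)+\ep}$ multiplier; the two claimed phase-space estimates then follow from separation of $\{y_k\}$ (which comes from Lemma~\ref{lemma, scattered}) and of $\{\xi_k\}$ (which comes from Lemma~\ref{lemma, position}, not Lemma~\ref{lemma, scattered} as you wrote). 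What this buys you is that you never need the multiplier-conjugation equivalence that underlies the paper's reverse inequality — an argument the paper itself leaves implicit. What it costs is essentially a second pass through the analysis of Lemma~\ref{lemma, estimates of special}: you must re-derive the multiplier bound and both phase-space estimates for the new family $F_j$, whereas the paper's route extracts both discrete inequalities from a single construction.
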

\begin{proof}
  If $e^{i\mu(D)}: \wpqs\rightarrow \wpq$ is bounded, for any Schwartz function we have
  \be
  \|e^{i\mu(D)}f\|_{\wpq}\lesssim \|f\|_{\wpqs},\ \ \
  \|f\|_{W^{p,q}_{-s}}\lesssim \|e^{i\mu(D)}f\|_{\wpq}.
  \ee
  This and Lemma \ref{lemma, estimates of special} yield that
  \be
  \|\{a_k\}_{k\in E}\|_{l^p}\lesssim \|\{a_k\}_{k\in E}\|_{l^q_{s/(\b-1)}},
  \ \
  \|\{a_k\}_{k\in E}\|_{l^q_{-s/(\b-1)}}\lesssim \|\{a_k\}_{k\in E}\|_{l^p}.
  \ee
In the above two inequalities, the set $E$ will be replaced by $\zn$, by using a rotation trick as follows.

  Denote by $\mu_P(\xi):=\mu(P^{-1}\xi)$,
  where $P$ is a orthogonal matrix.
  By a direct calculation we get
  \be
  \begin{split}
    e^{i\mu_P(D)}f=(e^{i\mu(D)}f_{P^{-1}})_P
  \end{split}
  \ee
  where $f_{P^{-1}}(x):=f(Px)$.
  Using Lemma \ref{lemma, rotation free}, we get
  \be
  \begin{split}
    \|e^{i\mu_P(D)}f\|_{\wpq}
    = &
    \|(e^{i\mu(D)}f_{P^{-1}})_P\|_{\wpq}
    \\
    \sim &
    \|e^{i\mu(D)}f_{P^{-1}}\|_{\wpq}
    \lesssim
    \|f_{P^{-1}}\|_{\wpqs}\sim \|f\|_{\wpqs}.
  \end{split}
  \ee
  Now, we have proved that the operator $e^{i\mu_P(D)}: \wpqs\rightarrow \wpq$ is bounded uniformly for all orthogonal matrix $P$.
  A direct calculation yields that
  \be
  \text{Hess}\mu_P(\xi)=P\text{Hess}\mu(P^{-1}\xi)P^{-1}.
  \ee
  Then $\text{Hess}\mu_P(\xi)$ and $\text{Hess}\mu(P^{-1}\xi)$ have the same eigenvalues.
  From the above arguments we claim that
  $\mu_P$ satisfies all the assumption of Theorem \ref{thm, wiener, sharp potential loss}
  when $\k_0$ is replaced by $P\k_0$.

Moreover, we can apply the same argument of Lemma \ref{lemma, scattered} and \ref{lemma, estimates of special} to the new operator
  $e^{i\mu_P}$ and get
  \ben\label{pf, 3,9}
    \|\{a_k\}_{k\in E_P}\|_{l^p}\lesssim \|\{a_k\}_{k\in E_P}\|_{l^q_{s/(\b-1)}},
  \
  \|\{a_k\}_{k\in E_P}\|_{l^q_{-s/(\b-1)}}\lesssim \|\{a_k\}_{k\in E_P}\|_{l^p}.
  \een
  where
\be
E_P:= \{l\in \bbZ^n\bs\{0\}: \langle l\rangle^{\frac{2-\b}{\b-1}}l\in P\G\},\ \xi_l:=\langle l\rangle^{\frac{2-\b}{\b-1}}l,
\ee
$\G$ is the cone chosen in the proof of Lemma \ref{lemma, scattered}.
Note that there exist finite orthogonal matrix, denoted by $P_i$ such that $\bigcup E_{P_i}=\zn\bs\{0\}$.
From this and \eqref{pf, 3,9}, we get
\be
 \begin{split}
   \|\{a_k\}_{k\in \zn}\|_{l^p}
   \lesssim &
   \sum_i\|\{a_k\}_{k\in E_{p_i}}\|_{l^p}+|a_0|
   \\
   \lesssim &
   \sum_i\|\{a_k\}_{k\in E_{P_i}}\|_{l^q_{s/(\b-1)}}+|a_0|
 \lesssim \|\{a_k\}_{k\in \zn}\|_{l^q_{s/(\b-1)}}.
 \end{split}
\ee
A similar argument yields another desired conclusion:
\be
\|\{a_k\}_{k\in \zn}\|_{l^q_{-s/(\b-1)}}\lesssim \|\{a_k\}_{k\in \zn}\|_{l^p}.
\ee
\end{proof}

\begin{proof}[Proof of Theorem \ref{thm, wiener, sharp potential loss}]
  If $p\leq q$,
  By Lemma \ref{lemma, rotation}, we have
  \be
  l^q_{s/(\b-1)}\subset l^p.
  \ee
  From this and Lemma \ref{lemma, discrete embedding}, we further obtain
  \be
  1/p\leq 1/q+s/n(\b-1)\Longleftrightarrow s\geq n(\b-1)(1/p-1/q)=n(\b-1)|1/p-1/q|
  \ee
  with strict inequality when $p<q$.

  If $p>q$, we use Lemma \ref{lemma, rotation} to get
  \be
  l^p \subset l^q_{-s/(\b-1)}.
  \ee
  Then Lemma \ref{lemma, discrete embedding} further imply that
  \be
   1/q-s/n(\b-1)<1/p\Longleftrightarrow s> n(\b-1)(1/q-1/p)=n(\b-1)|1/p-1/q|.
  \ee
\end{proof}

\begin{proof}[Proof of Corollary \ref{coy, wiener, sharp potential loss}]
  First, the sufficiency follows by Corollary \ref{coy, bd, exact conditions}.

  Note that $e^{i|D|^{\b}}$ and $e^{-i|D|^{\b}}$ are bounded on $\wpq$ when $\b\in (0,1]$.
  If $e^{i|D|^{\b}}: W_{p,q}^{\d}\rightarrow \wpq$ is bounded, we have

  \be
  \|f\|_{\wpq}=\|e^{-i|D|^{\b}}(e^{i|D|^{\b}}f)\|_{\wpq}
  \lesssim
  \|e^{i|D|^{\b}}\|_{\wpq} \lesssim \|f\|_{W_{p,q}^{\d}},
  \ee
  which implies that $\d\geq 0$.
  Finally, when $\b\in (1,2]$, by Lemma \ref{lemma, derivative to wiener} we have
  $(1-\r_0)\partial^{\g}\mu \in \calC^{[n/(\dot{p}\wedge \dot{q})]+1}\subset W^{\fy,\fy}_{[n/(\dot{p}\wedge \dot{q})]+1}
  \subset  W^{\infty,\fy}_{n/(\dot{p}\wedge \dot{q})+\ep}\ (|\g|=2)$ for some $\ep>0$, where $\r_0$ is the function
  mentioned in Theorem \ref{thm, wiener, sharp potential loss}.
  Then the necessity follows by Theorem \ref{thm, wiener, sharp potential loss}.
\end{proof}

Finally, we give the proof for the claim in Remark \ref{rk, plenty funtions}.
By the smoothness and homogeneous property of $\mu$, we have
\be
(1-\r_0)\partial^{\g}\mu \in \calC^{[n/(\dot{p}\wedge \dot{q})]+1}\subset W^{\fy,\fy}_{[n/(\dot{p}\wedge \dot{q})]+1}
  \subset  W^{\infty,\fy}_{n/(\dot{p}\wedge \dot{q})+\ep}\ (|\g|=2)
\ee
for some $\ep>0$. Next, we show there exists a point on $\mathbb{S}^{n-1}$ such that the Hessian matrix of $\mu$ is no-degenerate.
Here, we follows the argument in \cite[Appendix A]{Miyachi2009PAMS} with slight modification.
Without loss of generality we assume that $\mu(\xi)>0$ for all $\xi\neq 0$.
Write
\be
\mu(\xi)=|\xi|^{\b}\th(\xi),
\ee
where $\th$ is a homogeneous function of degree $0$ with $\th(\xi)>0 (\xi\neq 0)$.
A direct calculation yields that
\be
\begin{split}
  \frac{\partial^2\mu(\xi)}{\partial \xi_i\partial \xi_j}
  = &
  \b|\xi|^{\b-2}\d_{ij}\th(\xi)+\b(\b-2)|\xi|^{\b-4}\xi_i\xi_j\th(\xi)
  \\
  & +\frac{\partial|\xi|^{\b}}{\partial \xi_i}\frac{\partial \th(\xi)}{\partial\xi_j}
  +\frac{\partial|\xi|^{\b}}{\partial \xi_j}\frac{\partial \th(\xi)}{\partial \xi_i}
  +|\xi|^{\b}\frac{\partial^2\th(\xi)}{\partial \xi_i\partial \xi_j}.
\end{split}
\ee
There exists a point $\xi_0\in \mathbb{S}^{n-1}$ such that $\th(\xi_0)$ takes its minimum in $\rn\bs\{0\}$.
Thus, $\nabla\th(\xi_0)=0$,
and $\text{Hess}\th(\xi_0)$ is nonnegative definite. From this and the above equality we have
\be
\begin{split}
  \frac{\partial^2\mu(\xi_0)}{\partial \xi_i\partial \xi_j}
  = &
  \b\d_{ij}\th(\xi_0)+\b(\b-2)\xi_{0,i}\xi_{0,j}\th(\xi_0)
  +\frac{\partial^2\th(\xi_0)}{\partial x_i\partial x_j}.
\end{split}
\ee
Hence,
\be
\th(\xi_0)^{-1}\text{Hess}\mu(\xi_0)=\b E+\b(\b-2)(\xi_{0,i}\xi_{0,j})_{i,j}+\th(\xi_0)^{-1}\text{Hess}\th(\xi_0)= : A+\th(\xi_0)^{-1}\text{Hess}\th(\xi_0)
\ee
For a vector $x\in \rn$ we get
\be
\begin{split}
  x^TAx
  = &\b|x|^2+\b(\b-2)|\xi_0\cdot x|^2
  \\
  \geq &
  \b|x|^2+\b(\b-2)|\xi_0|^2\cdot |x|^2
  \\
  = &
  \b|x|^2+\b(\b-2)|x|^2=\b(\b-1)|x|^2.
\end{split}
\ee
Recall $\b\in (1,2]$, then the matrix $A$ is positive definite.
This implies that  $\text{Hess}\mu=A+\th(\xi_0)^{-1}\text{Hess}\th(\xi_0)$ is positive definite, which yields the non-degenerate of $\text{Hess}\mu(\xi_0)$.

\section{Complements: high growth of $\mu$}
Keep the prototype $\mu(\xi)=|\xi|^{\b}$ under consideration,
in order to get the desired boundedness result for the high growth case $\b>2$,
we find that the previous working space $W^{\fy,\fy}_{n/(\dot{p}\wedge \dot{q})+\ep}$ should be replaced
by a more reasonable one.
In fact, in the high growth case, in order to use the information of the second-order derivatives of $\mu$,
the working space is expected to be a function space in which the
functions can not only be localized in time, but also be invariant under the modulation operator.
By this observation, the Wiener amalgam space without potential, such like $\wpq$, may be a good choice.

On the other hand,
to establish the boundedness result on Wiener amalgam spaces,
another approach as used in \cite{Cunanan2014JMAA}, is to apply the boundedness result on modulation spaces.
We also note that the the natural working space for modulation case is just a Wiener amalgam space without potential,
for instance one can see the natural working space $W^{\fy,1}$ used in \cite{NicolaTabacoo2018JPDOA}.
Hence, in the high growth case,
it is a reasonable choice to verify the boundness result on Wiener amalgam spaces with the aid of
corresponding boundedness result on modulation spaces.
Here, we first give a generalization
of Theorem 1.2 in \cite{NicolaTabacoo2018JPDOA},
then verify the boundedness results on Wiener amalgam spaces
by an embedding relations between modulation and Wiener amalgam spaces.

\begin{lemma}\label{thm, wiener, high growth}\label{lemma, modulation case, high growth}
  Suppose $0<p,q\leq \infty$.
  Let $\mu$ be a real-valued $C^2(\bbR^n)$ function satisfying
  \be
  \begin{cases}
    \lan\xi\ran^{-s}\partial^{\g}\mu \in W^{\infty,1}\ (|\g|=2),\ \  \text{if}\  p\geq 1;
    \\
    \lan\xi\ran^{-s}\partial^{\g}\mu \in W^{\infty,1}_{n(1/\dot{p}-1)+\ep}\ (|\g|=2),\ \  \text{if}\  p<1,
  \end{cases}
  \ee
  for some $s,\ep>0$.
  Then $e^{i\mu(D)}: M^{p,q}_{\d} \rightarrow \mpq$ is bounded for $\d\geq sn|1/p-1/2|$.
\end{lemma}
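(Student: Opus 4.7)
The approach mirrors the interpolation argument used for Theorem~\ref{thm, interpolation case}, now carried out on the modulation-space side. The endpoint at $p=2$ comes for free: since $e^{i\mu}$ is unimodular and commutes with every $\Box_k$, one has $\|\Box_k e^{i\mu(D)}f\|_{L^2}=\|\Box_k f\|_{L^2}$, whence $e^{i\mu(D)}:M^{2,q}\to M^{2,q}$ with $\d=0$, requiring no assumption on the growth of $\partial^{\g}\mu$. The real work is to establish the opposite endpoint $e^{i\mu(D)}:M^{p_0,q}_{\d_0}\to M^{p_0,q}$ with $\d_0=sn|1/p_0-1/2|$, where $p_0$ is the extreme value of $p$ (namely $p_0=1$ when $p\ge 1$, and $p_0=\dot{p}$ when $p<1$).

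For the non-trivial endpoint I would use the convolution relation $M^{p_0,q}_{\d_0}\ast M^{\dot{p}_0,\fy}_{-\d_0}\subset M^{p_0,q}$ to reduce the problem to $\scrF^{-1}e^{i\mu}\in M^{\dot{p}_0,\fy}_{-\d_0}$, equivalently $e^{i\mu}\in W^{\fy,\dot{p}_0}_{-\d_0}$. Decomposing by $\s_k$ reduces this further to the uniform-in-$k$ estimate $\lan k\ran^{-\d_0}\|\s_k e^{i\mu}\|_{W^{\fy,\dot{p}_0}}\lesssim 1$. Writing the second-order Taylor expansion at $k$,
\be
\mu(k+\xi)=\mu(k)+\nabla\mu(k)\cdot\xi+R_k(\xi),\quad R_k(\xi)=\sum_{|\g|=2}\frac{2\xi^{\g}}{\g!}\int_0^1(1-t)\partial^{\g}\mu(k+t\xi)\,dt,
\ee
the linear phase is invisible to the $W^{\fy,\dot{p}_0}$-norm, leaving $\|\s_0 e^{iR_k}\|_{W^{\fy,\dot{p}_0}}$ to control. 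Since $\partial^{\g}\mu$ may grow like $\lan k\ran^s$, I would rescale the frequency by $\lan k\ran^{s/2}$: setting $\tilde R_k(\eta):=R_k(\eta/\lan k\ran^{s/2})$, the weighted hypothesis $\lan\xi\ran^{-s}\partial^{\g}\mu\in W^{\fy,1}$ (or the correspondingly weighted version when $\dot{p}<1$) translates into the uniform bound $\|\partial^{\g}\tilde R_k\|_{W^{\fy,1}}\lesssim 1$, and the Taylor-series-of-the-exponential argument from the proof of Lemma~\ref{lemma, modulation case} produces $\|\s_0^* e^{i\tilde R_k}\|_{W^{\fy,1}}\lesssim 1$. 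Tracking the dilation back through the $W^{\fy,\dot{p}_0}$-norm (together with the Sobolev embedding $W^{\fy,1}_{n(1/\dot{p}-1)+\ep}\hookrightarrow W^{\fy,\dot{p}}$ in the quasi-Banach range, which is exactly where the extra weight in the hypothesis is used) reinstates precisely the factor $\lan k\ran^{sn(1/\dot{p}_0-1/2)}=\lan k\ran^{\d_0}$, which is absorbed by the outer weight $\lan k\ran^{-\d_0}$.

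Complex interpolation between the $p=2$ estimate and this endpoint (extended to the quasi-Banach regime of modulation spaces) then yields the full range $0<p,q\le\fy$ with $\d\ge sn|1/p-1/2|$. The main obstacle is the precise calibration of the dilation exponent $s/2$ and the careful bookkeeping of scaling factors through the $W^{\fy,\dot{p}_0}$-norm; the appearance of $1/2$ here (as opposed to $1/(\dot{p}\wedge\dot{q})$ in the Wiener analogue, Theorem~\ref{thm, wiener, potential loss}) reflects the $L^2$-based local component of modulation spaces, while the extra weight $n(1/\dot{p}-1)+\ep$ demanded in the hypothesis when $p<1$ is exactly the Sobolev correction required for the embedding used above.
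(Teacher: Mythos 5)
Your proposal matches the paper's proof in every essential respect: the reduction via the convolution relation $M^{p,q}_{\d}\ast M^{\dot{p},\fy}_{-\d}\subset M^{p,q}$ to $\lan\cdot\ran^{-\d}e^{i\mu}\in W^{\fy,\dot p}$, the dilation by $\lan k\ran^{s/2}$ (which the paper implements via the covering family $B_k$ and the rescaled symbol $\mu_k(x)=\mu(x/\lan k\ran^{s/2})$), the second-order Taylor remainder together with the Taylor-series-of-the-exponential trick to absorb it, the use of the weighted embedding $W^{\fy,1}_{n(1/\dot p-1)+\ep}\subset W^{\fy,\dot p}$ when $p<1$, and finally complex interpolation against the trivial $p=2$ endpoint. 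The only small imprecision is phrasing the endpoint as $p_0=1$ for $p\ge 1$: the direct estimate yields $\d\ge sn/2$ simultaneously for $p=\fy$ and $p\le 1$, so interpolation is needed between $p=2$ and \emph{both} of $p=1$ and $p=\fy$ to cover $1<p<\fy$; similarly, writing $\|\partial^\g\tilde R_k\|_{W^{\fy,1}}\lesssim 1$ is a slight abuse — the controlled quantity is $\|\s_0^*\tilde R_k\|_{W^{\fy,1}}$, derived via the product algebra property from the hypothesis on $\lan\xi\ran^{-s}\partial^\g\mu$ — but neither affects the validity of the argument.
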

\begin{proof}
    We only give the sketch of this proof, since it is similar as the proof of Theorem \ref{thm, wiener, potential loss}
    and Lemma \ref{lemma, modulation case}.
    By the convolution relation (see \cite[Corollary 4.2]{GuoChenFanZhao2018MJM})
  \be
  M^{p,q}_{\d} \ast M^{\dot{p},\fy}_{-\d}\subset \mpq,
  \ee
  we only need to verify that $\scrF^{-1}e^{i\mu}\in M^{\dot{p},\fy}_{-\d}$,
  or equivalently, $\lan \xi\ran^{-\d}e^{i\mu}\in W^{\fy,\dot{p}}$.
  Write
\be
\|P_{-\d}e^{i\mu}\|_{W^{\fy,\dot{p}}}\sim \sup_{k\in \bbZ^n}\|\s_k P_{-\d}e^{i\mu}\|_{W^{\fy,\dot{p}}}
=\sup_{k\in \bbZ^n}\lan k\ran^{-\d}\|\s_k e^{i\mu}\|_{W^{\fy,\dot{p}}}.
\ee
Set
  \be
  B_k:=\{l\in \bbZ^n:\   \s_l\cdot \s_k(\frac{\cdot}{\langle k\rangle^{s/2}})\neq 0\}.
  \ee
  Observe $|B_k|\sim \lan k\ran^{sn/2}$,
  and recall that $W^{\fy,\dot{p}}$ is a Banach algebra (see \cite[Corollary 4.2]{GuoChenFanZhao2018MJM}).
  We have
  \be
  \begin{split}
   \|\s_ke^{i\mu}\|_{W^{\fy,\dot{p}}}
    = &
    \|\s_k\sum_{l\in B_k}\s_l(\langle k\rangle^{s/2}\cdot)e^{i\mu}\|_{W^{\fy,\dot{p}}}
    \\
    \lesssim &
    \|\s_k\|_{W^{\fy,\dot{p}}}
    \|\sum_{l\in B_k}\s_l(\langle k\rangle^{s/2}\cdot)e^{i\mu}\|_{W^{\fy,\dot{p}}}
    \\
    \lesssim &
    \|\sum_{l\in B_k}\s_l(\langle k\rangle^{s/2}\cdot)e^{i\mu}\|_{W^{\fy,\dot{p}}}
    \\
    \lesssim &
    \left(\sum_{l\in B_k}\|\s_l(\langle k\rangle^{s/2}\cdot)e^{i\mu}\|^{\dot{p}}_{W^{\fy,\dot{p}}}\right)^{1/\dot{p}}
    \lesssim \lan k\ran^{\frac{sn}{2\dot{p}}}\sup_{l\in B_k}\|\s_l(\langle k\rangle^{s/2}\cdot)e^{i\mu}\|_{W^{\fy,\dot{p}}}
  \end{split}
  \ee
  Denote by $\mu_k(x):=\mu(\frac{x}{\langle k\rangle^{s/2}})$, and
  \be
  R_l^k(\xi):=\mu_k(\xi+l)-\mu_k(l)-\nabla \mu(l)\xi
=\sum_{|\g|=2}\frac{2\xi^{\g}}{\g!}\int_0^1(1-t)\partial^{\g}\mu(l+t\xi)dt.
\ee
  For $l\in B_k$, we further have
  \be
  \begin{split}
    \|\s_l(\langle k\rangle^{s/2}\cdot)e^{i\mu}\|_{W^{\fy,\dot{p}}}
    = &
    \|\s_l(\langle k\rangle^{s/2}\cdot)e^{i\mu}\|_{\scrF L^{\dot{p}}}
    \\
    = &
    \lan k\ran^{\frac{sn}{2}(1/\dot{p}-1)}
    \|\s_le^{i\mu_k}\|_{\scrF L^{\dot{p}}}
    \\
    = &
    \lan k\ran^{\frac{sn}{2}(1/\dot{p}-1)}
    \|\s_0e^{i\mu_k(\cdot+l)}\|_{\scrF L^{\dot{p}}}
    \\
    = &
    \lan k\ran^{\frac{sn}{2}(1/\dot{p}-1)}
    \|\s_0e^{iR_l^k}\|_{\scrF L^{\dot{p}}}
    \\
    \sim &
    \lan k\ran^{\frac{sn}{2}(1/\dot{p}-1)}
    \|\s_0e^{iR_l^k}\|_{W^{\fy,\dot{p}}}
    \\
    \lesssim &
    \lan k\ran^{\frac{sn}{2}(1/\dot{p}-1)}\exp(C\|\s_0^*R_l^k\|_{W^{\fy,\dot{p}}})
    \lesssim
    \lan k\ran^{\frac{sn}{2}(1/\dot{p}-1)},
  \end{split}
  \ee
where in the last inequality
we use $\|\s_0^*R_l^k\|_{W^{\fy,\dot{p}}}=\|\s_0^*R_l^k\|_{W^{\fy,1}}\lesssim 1$ for $p\geq 1$, and use
$\|\s_0^*R_l^k\|_{W^{\fy,\dot{p}}}\lesssim \|\s_0^*R_l^k\|_{W^{\infty,1}_{n(1/\dot{p}-1)+\ep}}\lesssim 1$ for $p<1$,
which can be derived by a similar argument as in the proof of Theorem \ref{thm, wiener, potential loss}.

Combining the above estimates yields that for $p=\fy$ or $p\leq 1$,
\be
\begin{split}
\|P_{-\d}e^{i\mu}\|_{W^{\fy,\dot{p}}}
\sim &\sup_{k\in \bbZ^n}\lan k\ran^{-\d}\|\s_ke^{i\mu}\|_{W^{\fy,\dot{p}}}
\\
\lesssim &
\sup_{k\in \bbZ^n}\lan k\ran^{-\d}\lan k\ran^{\frac{sn}{2\dot{p}}}\sup_{l\in B_k}\|\s_l(\langle k\rangle^{s/2}\cdot)e^{i\mu}\|_{W^{\fy,\dot{p}}}
\\
\lesssim &
\sup_{k\in \bbZ^n}\lan k\ran^{-\d}\lan k\ran^{\frac{sn}{2\dot{p}}}\lan k\ran^{\frac{sn}{2}(1/\dot{p}-1)}
\sim
\sup_{k\in \bbZ^n}\lan k\ran^{-\d+sn(1/\dot{p}-1/2)}\lesssim 1,
\end{split}
\ee
where we use the assumption $\d\geq sn|1/p-1/2|=sn(1/\dot{p}-1/2)$ as $p=\fy$ or $p\leq 1$.

Note that when $p=2$, $e^{i\mu(D)}$ is bounded on $\mpq$. The final conclusion then follows by an interpolation
among the cases of $p=2$, $p=\fy$ and $p\leq 1$.
\end{proof}

Then, we recall an embedding relations between modulation and Wiener amalgam spaces.
The proof is easy, so we omit here.
\begin{lemma}\label{lemma, emb between modulation and Wiener}
  Let $p,q\in (0,\fy]$, then
  \be
  \begin{split}
    M^{p,q}\subset W^{p,q},\ \text{if}&\ p\geq q,
    \\
    M^{p,q}_{\d}\subset W^{p,q},\ \text{if}&\ p< q, \d>n|1/p-1/q|,
  \end{split}
  \ee
  and
  \be
    \begin{split}
    W^{p,q}\subset M^{p,q},\ \text{if}&\ p\leq q,
    \\
    W^{p,q}_{\d}\subset M^{p,q},\ \text{if}&\ p> q, \d>n|1/p-1/q|.
  \end{split}
  \ee
\end{lemma}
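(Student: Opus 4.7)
My strategy is to reduce all four embeddings to routine inequalities for mixed-norm sequence spaces on the time-frequency lattice $\zn\times\zn$. Fixing a Schwartz window $\phi$, sampling the STFT at integer lattice points, and combining with the $\Box_k$-discretization for $\mpqs$ and the spatial partition characterization from Lemma \ref{lemma, equi norm of Wiener} for $\wpqs$, one obtains the equivalences
\be
\|f\|_{\mpqs}\sim \big\|\{\lan k\ran^s a_{j,k}\}\big\|_{\ell^q_k(\ell^p_j)}, \qquad \|f\|_{\wpqs}\sim \big\|\{\lan k\ran^s a_{j,k}\}\big\|_{\ell^p_j(\ell^q_k)},
\ee
where $a_{j,k}:=V_\phi f(j,k)$, valid in the full quasi-Banach range $0<p,q\leq \fy$. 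After this step the problem becomes purely combinatorial.

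For the unweighted embeddings I would assume $p\leq q$, set $s:=q/p\geq 1$ and $b_{j,k}:=a_{j,k}^p\geq 0$. A direct computation yields
\be
\|a\|_{\ell^q_k(\ell^p_j)}^p=\Big\|\sum_j b(j,\cdot)\Big\|_{\ell^s_k}\quad \text{and}\quad \|a\|_{\ell^p_j(\ell^q_k)}^p=\sum_j\|b(j,\cdot)\|_{\ell^s_k},
\ee
so the desired inclusion $\wpq\subset \mpq$ reduces to Minkowski's inequality in $\ell^s_k$, which is valid because $s\geq 1$. The case $p\geq q$, giving $\mpq\subset \wpq$, is entirely analogous with the substitution $s:=p/q\geq 1$ and $b:=a^q$. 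This reduction bypasses the usual restriction $p,q\geq 1$ for Minkowski's integral inequality, so it covers the full quasi-Banach range at once.

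For the weighted embedding $M^{p,q}_\d\subset \wpq$ with $p<q$ and $\d>n(1/p-1/q)$, the plan has two steps: first I would use the trivial sequence inclusion $\ell^p_k\subset \ell^q_k$ (valid since $p\leq q$) to obtain $\|a\|_{\ell^p_j(\ell^q_k)}\leq \|a\|_{\ell^p_{(j,k)}}$; then apply H\"older's inequality in the $k$-variable with dual exponents $q/p>1$ and $q/(q-p)$ to trade the unweighted $\ell^p_k$-norm for a weighted $\ell^q_k$-norm, picking up a factor $\big(\sum_k \lan k\ran^{-\d pq/(q-p)}\big)^{(q-p)/(pq)}$ which is finite precisely because $\d>n(1/p-1/q)$. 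The companion embedding $W^{p,q}_\d\subset \mpq$ with $p>q$ will follow by the mirror-image argument: first apply H\"older in the outer $\ell^q_k$-norm on the $M$-side to extract the weight $\lan k\ran^\d$ (the weight sum converges by the same threshold), then use the trivial inclusion $\ell^q_k\subset \ell^p_k$ (now valid because $q\leq p$) on the remaining factor.

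The only subtle point is the justification of the discrete sequence-space characterizations in the quasi-Banach range, but this is classical and follows from the $\Box_k$-discretization after the definition of $\mpqs$ combined with Lemma \ref{lemma, equi norm of Wiener}, so it is not a real obstacle. The strictness $\d>n|1/p-1/q|$ in the weighted cases is sharp at the level of this argument because it is precisely the threshold for convergence of the weight sum $\sum_k \lan k\ran^{-\d pq/|q-p|}$.
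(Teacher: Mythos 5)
The paper states this lemma and omits the proof as ``easy,'' so there is no in-paper argument to compare against; I will therefore assess your proposal on its own terms. Your argument is correct. The reduction to mixed $\ell^q_k(\ell^p_j)$ versus $\ell^p_j(\ell^q_k)$ quasi-norms, the substitution $b=a^p$ (resp.\ $a^q$) reducing the unweighted case to the one-sided Minkowski inequality in $\ell^s$ with $s=q/p$ (resp.\ $p/q$) $\geq 1$, and the H\"older step with exponents $q/p$ and $q/(q-p)$ (resp.\ $p/q$ and $p/(p-q)$) in the weighted case, with the weight sum $\sum_k\lan k\ran^{-\d pq/|q-p|}$ converging precisely when $\d>n|1/p-1/q|$, all go through as you describe, for the full range $0<p,q\leq\fy$.

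One remark that simplifies the presentation: the lattice-sampling step is not needed. Definitions \ref{Definition, modulation space, continuous form} and \ref{Definition, Wiener amalgam space, continuous form} already express $\|f\|_{\mpqs}$ and $\|f\|_{\wpqs}$ as the mixed norms $\|\lan\xi\ran^s V_\phi f\|_{L^q_\xi(L^p_x)}$ and $\|\lan\xi\ran^s V_\phi f\|_{L^p_x(L^q_\xi)}$ of the \emph{continuous} STFT of the \emph{same} function $f$ with the \emph{same} window. Every step you take — the $b=|a|^p$ substitution, Minkowski in $L^s$ with $s\geq1$, H\"older in $\xi$ with the convergent weight integral $\int\lan\xi\ran^{-\d pq/|q-p|}d\xi$ — works verbatim with integrals over $\rn\times\rn$ in place of sums over $\zn\times\zn$. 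This removes your ``only subtle point'' entirely: you do not need a Gabor-frame or $\Box_k$ discrete characterization in the quasi-Banach range, and the argument rests only on the definition of the two spaces via one fixed window. The discrete route you propose is also valid (the quasi-Banach Gabor characterization is in Galperin--Samarah for $\mpqs$, and transfers to $\wpqs$ by Fourier duality), but it imports machinery that the continuous version does not require.

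One small bookkeeping check worth stating explicitly when you write this out: in the weighted case $W^{p,q}_\d\subset M^{p,q}$ with $p>q$, after the H\"older step you are left with $\bigl(\sum_{j,k}\lan k\ran^{\d p}|a_{j,k}|^p\bigr)^{1/p}=\|\lan k\ran^\d a\|_{\ell^p_{(j,k)}}$, and you then need $\ell^q_k\subset\ell^p_k$ \emph{inside} the $j$-sum (i.e.\ $\|\lan k\ran^\d a(j,\cdot)\|_{\ell^p_k}\leq\|\lan k\ran^\d a(j,\cdot)\|_{\ell^q_k}$ for each fixed $j$, then raise to the $p$ and sum in $j$) to arrive at $\|\lan k\ran^\d a\|_{\ell^p_j(\ell^q_k)}$. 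Your sketch phrases this as applying the inclusion ``on the remaining factor,'' which is correct but compressed; spelling out that the inclusion is applied fiberwise in $k$ for each $j$ makes the order of operations unambiguous.
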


Now, we are in a position to give our desired conclusion.
\begin{theorem}[high growth of $\mu$]\label{thm, wiener, potential loss, high growth}
  Suppose $0<p,q\leq \infty$.
  Let $\mu$ be a real-valued $C^2(\bbR^n)$ function satisfying
  \be
  \begin{cases}
    \lan\xi\ran^{-s}\partial^{\g}\mu \in W^{\infty,1},\ \  \text{if}\  p\geq 1;
    \\
    \lan\xi\ran^{-s}\partial^{\g}\mu \in W^{\infty,1}_{n(1/\dot{p}-1)+\ep},\ \  \text{if}\  p<1,
  \end{cases}
  \ee
  for some $s,\ep>0$ and all $|\g|=2$.
  Then $e^{i\mu(D)}: W^{p,q}_{\d} \rightarrow \wpq$ is bounded for $\d\geq sn|1/p-1/2|+n|1/p-1/q|$ with
  strict inequality when $p\neq q$.
\end{theorem}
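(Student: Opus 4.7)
The plan is to sandwich the target mapping $W^{p,q}_\d\to W^{p,q}$ between a pair of embeddings drawn from Lemma \ref{lemma, emb between modulation and Wiener} and apply the modulation-space boundedness of Lemma \ref{lemma, modulation case, high growth} in the middle. Because $e^{i\mu(D)}$ is a Fourier multiplier it commutes with the Bessel potential $\lan D\ran^{t}$, so Lemma \ref{lemma, modulation case, high growth} upgrades at once from boundedness $M^{p,q}_{\d_0}\to M^{p,q}$ (with $\d_0\ge sn|1/p-1/2|$) to boundedness $M^{p,q}_{\d_0+t}\to M^{p,q}_{t}$ for every $t\in\bbR$. The identity $M^{p,p}=W^{p,p}$ dispatches the diagonal case $p=q$ immediately: the theorem's bound becomes $\d\ge sn|1/p-1/2|$, which is just Lemma \ref{lemma, modulation case, high growth}.

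For $p\neq q$ I split into two sub-cases. When $p<q$ I fix any $t>n|1/p-1/q|$ and compose
\[
W^{p,q}_{\d_0+t}\hookrightarrow M^{p,q}_{\d_0+t}\xrightarrow{e^{i\mu(D)}}M^{p,q}_{t}\hookrightarrow W^{p,q},
\]
where the first arrow uses $W^{p,q}_{s}\subset M^{p,q}_{s}$ valid for $p\le q$ (first half of Lemma \ref{lemma, emb between modulation and Wiener} with the $\lan\xi\ran^{s}$ weight carried along), the middle arrow is the shifted modulation-space bound, and the last arrow is $M^{p,q}_{t}\subset W^{p,q}$ for $p<q$ and $t>n|1/p-1/q|$. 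When $p>q$ I reverse the roles and form
\[
W^{p,q}_{\d_0+t}\hookrightarrow M^{p,q}_{\d_0}\xrightarrow{e^{i\mu(D)}}M^{p,q}\hookrightarrow W^{p,q},
\]
whose first arrow is $W^{p,q}_{\d_0+t}\subset M^{p,q}_{\d_0}$, i.e.\ the weighted embedding $W^{p,q}_{t}\subset M^{p,q}$ (valid for $p>q$, $t>n|1/p-1/q|$) translated by the weight $\d_0$, while the last arrow is the unweighted $M^{p,q}\subset W^{p,q}$ for $p\ge q$. In both sub-cases I take $\d_0=sn|1/p-1/2|$ and let $t$ approach $n|1/p-1/q|$ from above, yielding the sharp threshold $\d>sn|1/p-1/2|+n|1/p-1/q|$ announced in the theorem.

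No genuine obstacle arises in this argument; everything reduces to the already-proven Lemmas \ref{lemma, modulation case, high growth} and \ref{lemma, emb between modulation and Wiener}. The only point requiring a brief justification is that the embeddings of Lemma \ref{lemma, emb between modulation and Wiener} carry over faithfully when an extra weight $\lan\xi\ran^{\d_0}$ is attached to both sides; this is transparent because the potential weight in $M^{p,q}_{s}$ and $W^{p,q}_{s}$ acts only in the $\xi$-variable (the two spaces differ only by the order of integration in $x$ and $\xi$), so tensoring with $\lan\xi\ran^{\d_0}$ commutes with the inclusions. With this observation in hand, the composition of the three arrows in each chain delivers the claimed boundedness with the claimed $\d$.
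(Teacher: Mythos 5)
Your argument matches the paper's own proof: the diagonal case $p=q$ is dispatched by $W^{p,p}=M^{p,p}$ together with Lemma~\ref{lemma, modulation case, high growth}, and the off-diagonal cases are obtained by sandwiching the (weight-shifted) modulation-space bound from Lemma~\ref{lemma, modulation case, high growth} between the two embeddings of Lemma~\ref{lemma, emb between modulation and Wiener}, using that $e^{i\mu(D)}$ commutes with $\lan D\ran^{t}$ and that the Bessel-potential weight tensors transparently with these inclusions. Your chains for $p<q$ and $p>q$ are, after the substitution $t=\d-\d_0$, literally the inequalities the paper writes, so this is the same proof modulo notation.
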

\begin{proof}
  The case $p=q$ follows by Lemma \ref{lemma, modulation case, high growth} and the fact $W^{p,p}=M^{p,p}$.

  For $p>q$, we use Lemma \ref{lemma, modulation case, high growth} and \ref{lemma, emb between modulation and Wiener} to deduce
  \be
  \|e^{i\mu(D)}f\|_{\wpq}
  \lesssim \|e^{i\mu(D)}f\|_{\mpq}
  \lesssim \|f\|_{M^{p,q}_{sn|1/p-1/2|}}
  \lesssim \|f\|_{W^{p,q}_{\d}}.
  \ee

  For $p<q$, we use Lemma \ref{lemma, modulation case, high growth} and \ref{lemma, emb between modulation and Wiener} to deduce
  \be
  \|e^{i\mu(D)}f\|_{\wpq}
  \lesssim \|e^{i\mu(D)}f\|_{M^{p,q}_{\d-sn|1/p-1/2|}}
  \lesssim \|f\|_{M^{p,q}_{\d}}
  \lesssim \|f\|_{W^{p,q}_{\d}}.
  \ee
\end{proof}

As in Corollary \ref{coy, bd, exact conditions}, we can also establish the conclusions
fitting more detailed derivative conditions of $\mu$. Obviously, following corollary is an improvement of \cite[Theorem 1.1]{Cunanan2014JMAA}
\begin{corollary}\label{coy, bd, exact conditions, high growth}
  Suppose $0<p,q\leq \infty$.
  Let $\ep>n(1/\dot{p}-1)$, $\b>2$.
  Let $\mu$ be a real-valued function of class $C^{[n/(\dot{p}\wedge \dot{q})]+3}$ on $\mathbb{R}^n \backslash \{0\}$ which satisfies
\begin{equation}
|\partial^{\gamma}\mu(\xi)|\leq C_{\gamma}|\xi|^{\epsilon-|\gamma|}, \hspace{5mm} 0<|\xi|\leq 1,~|\gamma|\leq[n/(1/\dot{p}-1/2)]+1,
\end{equation}
and
\begin{equation}
  |\partial^{\gamma}\mu(\xi)|\leq C_{\gamma}|\xi|^{\beta-2}, \hspace{5mm} |\xi|>1,~2\leq|\gamma|\leq[n(1/\dot{p}-1/2)]+3
\end{equation}
Then $e^{i\mu(D)}: W^{p,q}_{\d} \rightarrow \wpq$ is bounded for $\d\geq n(\b-2)|1/p-1/2|+n|1/p-1/q|$ with
  strict inequality when $p\neq q$.
\end{corollary}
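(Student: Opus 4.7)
The plan is to mimic the structure of the proof of Corollary \ref{coy, bd, exact conditions}, splitting $\mu$ into a low-frequency part and a high-frequency part and handling each separately, with the only genuine novelty being the application of Theorem \ref{thm, wiener, potential loss, high growth} (the high-growth modulation-interpolation result) instead of Theorem \ref{thm, interpolation case}. Concretely, fix a smooth cutoff $\r_0$ supported in $B(0,1)$ with $\r_0=1$ on $B(0,1/2)$ and write
\[
\mu=\mu_1+\mu_2,\qquad \mu_1:=\r_0\mu,\quad \mu_2:=(1-\r_0)\mu.
\]

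For the low-frequency part, the hypothesis $|\partial^{\g}\mu(\xi)|\lesssim |\xi|^{\ep-|\g|}$ on $\{0<|\xi|\leq 1\}$ together with the assumption $\ep>n(1/\dot{p}-1)$ fits exactly the scope of Lemma \ref{lemma, near the origin} after multiplying by an auxiliary cutoff $\r_0^*$, so the same argument as in the proof of Corollary \ref{coy, bd, exact conditions} yields $e^{i\mu_1}\in \scrF L^{\dot{p}}$. Since $e^{i\mu_1}$ is compactly supported we then get $\scrF^{-1}e^{i\mu_1}\in W^{\dot p,\fy}$, and the convolution relation $\wpq\ast W^{\dot p,\fy}\subset \wpq$ (Lemma \ref{lemma, conv of Wfy}) gives $\wpq$-boundedness of $e^{i\mu_1(D)}$ with no potential loss.

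For the high-frequency part, the goal is to check the hypotheses of Theorem \ref{thm, wiener, potential loss, high growth} with $s=\b-2$. From the assumption $|\partial^{\g}\mu(\xi)|\leq C_{\g}|\xi|^{\b-2}$ for $|\xi|>1$ and $2\leq|\g|\leq [n(1/\dot p-1/2)]+3$, an application of the Leibniz rule (together with the elementary bound $|\partial^{\al}\langle\xi\rangle^{-(\b-2)}|\lesssim\langle\xi\rangle^{-(\b-2)-|\al|}$) gives, for each fixed $|\g|=2$,
\[
\big|\partial^{\al}\bigl(\langle\xi\rangle^{-(\b-2)}\partial^{\g}\mu_2(\xi)\bigr)\big|\lesssim 1,\qquad |\al|\leq N,
\]
where $N:=[n(1/\dot p-1/2)]+1$. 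In other words $\langle\xi\rangle^{-(\b-2)}\partial^{\g}\mu_2\in\calC^{N}$. Lemma \ref{lemma, derivative to wiener} then places this function in $W^{\fy,\fy}_N$, and a direct check gives $W^{\fy,\fy}_N\subset W^{\fy,1}$ when $p\geq 1$ (since $N>n$), and $W^{\fy,\fy}_N\subset W^{\fy,1}_{n(1/\dot p-1)+\ep}$ for $p<1$ once $\ep>0$ is taken small enough (since $N>n/\dot p$). Hence the hypothesis of Theorem \ref{thm, wiener, potential loss, high growth} is met, and we conclude that $e^{i\mu_2(D)}: W^{p,q}_{\d}\rightarrow \wpq$ is bounded for $\d\geq (\b-2)n|1/p-1/2|+n|1/p-1/q|$ with strict inequality when $p\neq q$. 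Combining the two pieces through $e^{i\mu(D)}=e^{i\mu_1(D)}e^{i\mu_2(D)}$ (equivalently through boundedness of each factor on the target) yields the claim.

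The main obstacle, as in Corollary \ref{coy, bd, exact conditions}, is bookkeeping: one must make the number of derivatives available in the hypothesis match the Sobolev-type index $N$ required by the embedding $\calC^N\hookrightarrow W^{\fy,1}_{n(1/\dot p-1)+\ep}$ in the quasi-Banach regime $p<1$. Once this counting is settled and the Leibniz computation for $\langle\xi\rangle^{-(\b-2)}\partial^{\g}\mu_2$ is written out, everything else is a transparent reuse of tools already established in the paper (Lemmas \ref{lemma, conv of Wfy}, \ref{lemma, near the origin}, \ref{lemma, derivative to wiener}, and Theorem \ref{thm, wiener, potential loss, high growth}).
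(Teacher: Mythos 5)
Your decomposition and the reduction to Theorem \ref{thm, wiener, potential loss, high growth} with $s=\b-2$ are exactly what the paper does, and the Leibniz-rule observation that $\langle\xi\rangle^{-(\b-2)}\partial^{\g}\mu_2 \in \calC^{N}$ with $N=[n(1/\dot p-1/2)]+1$ is also correct. The problem is in the last step: you route through Lemma~\ref{lemma, derivative to wiener} to get $\langle\xi\rangle^{-(\b-2)}\partial^{\g}\mu_2\in W^{\fy,\fy}_N$ and then invoke the embeddings $W^{\fy,\fy}_N\subset W^{\fy,1}$ (for $p\geq 1$, ``since $N>n$'') and $W^{\fy,\fy}_N\subset W^{\fy,1}_{n(1/\dot p-1)+\ep}$ (for $p<1$, ``since $N>n/\dot p$''). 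Both inequalities are false. For $p\geq 1$, $N=[n/2]+1\leq n$ for every $n\geq 1$, so $N>n$ never holds; the embedding $W^{\fy,\fy}_N\subset W^{\fy,1}$ genuinely needs $N>n$ because a pointwise decay $\lan x\ran^{-N}$ on $\scrF^{-1}(\s_k g)$ only gives $L^1$ integrability when $N>n$. Similarly for $p<1$, $N\leq n(1/\dot p-1/2)+1$, so $N>n/\dot p$ would force $n<2$. In short: this embedding route costs roughly twice as many derivatives as the hypothesis provides, so the argument does not close.

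The paper avoids this loss by estimating $\|g\|_{W^{\fy,1}}$ directly through $L^2$ rather than through $W^{\fy,\fy}_N$. Concretely, with $g:=\lan\xi\ran^{-(\b-2)}\partial^{\g_0}\mu_2$ and $M:=[n(1/\dot p-1/2)]+1$, one writes
\[
\|\scrF^{-1}(\s_k g)\|_{L^1}
\leq \bigl\|\lan x\ran^{-M}\bigr\|_{L^2}\cdot\bigl\|\lan x\ran^{M}\scrF^{-1}(\s_k g)\bigr\|_{L^2}
\sim \|\s_k g\|_{H^{M}},
\]
where the first factor is finite because $2M>n$ (equivalently $M>n/2$), not $M>n$; then Plancherel and the $\calC^{M}$-bound on $g$ give $\|\s_k g\|_{H^{M}}\lesssim 1$ uniformly in $k$. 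The same Cauchy--Schwarz device works with the weight $\lan x\ran^{n(1/\dot p-1)+\ep}$ for $p<1$, again requiring only $M>n(1/\dot p-1/2)+\ep$, which $M=[n(1/\dot p-1/2)]+1$ satisfies for small $\ep$. This factor-of-two saving in the derivative count is exactly what makes the stated hypothesis sufficient; you should replace the $W^{\fy,\fy}_N$ detour with this direct $L^1\!-\!L^2$ estimate.
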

\begin{proof}[Proof of Corollary \ref{coy, bd, exact conditions, high growth}]
  Let $\r_0$ be a smooth function supported on B(0,1) and
  satisfies $\r_0(\xi)=1$ on $B(0,1/2)$. Denote by
  \be
  \mu_1:= \r_0\mu,\ \ \ \mu_2:= (1-\r_0)\mu.
  \ee
  The boundedness of $e^{i\mu_1(D)}$ follows by the same argument as in the proof of Corollary \ref{coy, bd, exact conditions}.
  Now we turn to the estimate of $e^{i\mu_2(D)}$.

  Denote $s:=\b-2$.
  We claim that
  \be
  \begin{cases}
    \lan\xi\ran^{-s}\partial^{\g}\mu \in W^{\infty,1},\ \  \text{if}\  p\geq 1;
    \\
    \lan\xi\ran^{-s}\partial^{\g}\mu \in W^{\infty,1}_{n(1/\dot{p}-1)+\ep},\ \  \text{if}\  p<1,
  \end{cases}
  \ee
  for some $\ep>0$ and all $|\g|=2$.
  Then the final conclusion follows by this claim and Theorem \ref{thm, wiener, potential loss, high growth}.

  For any fixed $\g_0$ with $|\g_0|=2$, we denote $g_{\g_0}:=\lan\xi\ran^{-s}\partial^{\g_0}\mu_2$.
  It follows by the assumption that
  \begin{equation}
  |\partial^{\g}g_{\g_0}(\xi)|\lesssim 1, \hspace{5mm} |\xi|>1,~|\gamma|\leq [n(1/\dot{p}-1/2)]+1.
\end{equation}
  Thus, for $p\geq 1$, we use the H\"{o}lder inequality to deduce that
  \be
  \begin{split}
    \|g_{\g_0}\|_{W^{\fy,1}}= &\sup_{k\in \zn}\|\s_kg_{\g_0}\|_{\scrF L^1}
    \\
    = &
    \sup_{k\in \zn}\|\scrF^{-1}(\s_kg_{\g_0})\|_{L^1}
    \\
    \lesssim &
    \sup_{k\in \zn}\|\lan x\ran^{[n/2]+1}\scrF^{-1}(\s_kg_{\g_0})(x)\|_{L^2}
    \\
    \sim &
    \sup_{k\in \zn}\|\s_kg_{\g_0}\|_{H^{[n/2]+1}}\sim \sup_{k\in \zn}\sum_{|\g|\leq [n/2]+1}\|\partial^{\g}(\s_kg_{\g_0})\|_{L^2}
    \lesssim 1.
  \end{split}
  \ee
  For $p< 1$, choose $\ep$ such that $n(1/\dot{p}-1/2)+\ep<[n(1/\dot{p}-1/2)]+1$, then
    \be
  \begin{split}
    &\|g_{\g_0}\|_{W^{\infty,1}_{n(1/\dot{p}-1)+\ep}}
    = \sup_{k\in \zn}\|\s_kg_{\g_0}\|_{\scrF L^{1}_{n(1/\dot{p}-1)+\ep}}
    \\
    = &
    \sup_{k\in \zn}\|\lan x\ran^{n(1/\dot{p}-1)+\ep}\scrF^{-1}(\s_kg_{\g_0})(x)\|_{L^{1}}
    \\
    = &
    \sup_{k\in \zn}\|\lan x\ran^{(n(1/\dot{p}-1/2)+\ep)-([n(1/\dot{p}-1/2)]+1)-n/2}
    \lan x\ran^{[n(1/\dot{p}-1/2)]+1}\scrF^{-1}(\s_kg_{\g_0})(x)\|_{L^{1}}
    \\
    \lesssim &
    \sup_{k\in \zn}\|\lan x\ran^{[n(1/\dot{p}-1/2)]+1}\scrF^{-1}(\s_kg_{\g_0})(x)\|_{L^2}
    \\
    \sim &
    \sup_{k\in \zn}\|\s_kg_{\g_0}\|_{H^{[n(1/\dot{p}-1/2)]+1}}\sim \sup_{k\in \zn}\sum_{|\g|\leq [n(1/\dot{p}-1/2)]+1}\|\partial^{\g}(\s_kg_{\g_0})\|_{L^2}
    \lesssim 1.
  \end{split}
  \ee
  We have now verified the claim and completed this proof.
\end{proof}

Applying Corollary \ref{coy, bd, exact conditions} and \ref{coy, bd, exact conditions, high growth} to the prototype $\mu(\xi)=|\xi|^{\b}$, we deduce the following conclusion.
\begin{corollary}
  Let $0<p,q\leq \fy$, $\b>n(1/\dot{p}-1)$.
  We have $e^{i|D|^{\b}}: W_{p,q}^{\d}\rightarrow \wpq$ is bounded if
  \be
  \d\geq n|1/p-1/q|\min\{\max\{\b-1,0\},1\}+n|1/p-1/2|\max\{\b-2,0\}
  \ee
  with strict inequality when $\b>1, p\neq q$.
\end{corollary}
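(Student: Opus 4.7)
The plan is to split the analysis into the three natural ranges $\beta \in (0,1]$, $\beta \in (1,2]$, and $\beta > 2$, and in each range verify that the prototype $\mu(\xi) = |\xi|^{\beta}$ satisfies the hypotheses of either Corollary \ref{coy, bd, exact conditions} (when $\beta \leq 2$) or Corollary \ref{coy, bd, exact conditions, high growth} (when $\beta > 2$). Since $|\partial^{\gamma}|\xi|^{\beta}| \lesssim |\xi|^{\beta - |\gamma|}$ for $\xi \neq 0$ and all multi-indices $\gamma$, the required pointwise derivative bounds hold automatically, with the small-$\xi$ condition using $\epsilon = \beta$ (which is admissible precisely thanks to the standing hypothesis $\beta > n(1/\dot{p}-1)$).

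In the first range $\beta \in (0,1]$, both $\min\{\max\{\beta-1,0\},1\}$ and $\max\{\beta-2,0\}$ vanish, so the required potential loss is simply $\delta \geq 0$; this is exactly the conclusion of Corollary \ref{coy, bd, exact conditions} in its $\beta \in (0,1]$ branch (no strict inequality needed, since we are not in the case $\beta > 1$). In the second range $\beta \in (1,2]$, we have $\min\{\max\{\beta-1,0\},1\} = \beta - 1$ and $\max\{\beta-2,0\} = 0$, so the target bound reduces to $\delta \geq n(\beta-1)|1/p - 1/q|$ with strict inequality when $p \neq q$, which is precisely the $\beta \in (1,2]$ output of Corollary \ref{coy, bd, exact conditions}.

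For the third range $\beta > 2$, we have $\min\{\max\{\beta-1,0\},1\} = 1$ and $\max\{\beta-2,0\} = \beta - 2$, so the requirement becomes $\delta \geq n|1/p-1/q| + n(\beta-2)|1/p-1/2|$ with strict inequality when $p \neq q$. The derivative bounds $|\partial^{\gamma}|\xi|^{\beta}| \lesssim |\xi|^{\beta - |\gamma|} \leq |\xi|^{\beta - 2}$ for $|\xi| > 1$ and $|\gamma| \geq 2$ are exactly those imposed in Corollary \ref{coy, bd, exact conditions, high growth}, which then yields the claim directly.

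There is essentially no obstacle beyond bookkeeping: the proof is a case-by-case invocation of the two exact-condition corollaries established earlier in the paper, with the observation that the homogeneity of $|\xi|^{\beta}$ away from the origin gives the required derivative estimates to arbitrary order and the hypothesis $\beta > n(1/\dot{p} - 1)$ supplies the admissible small-frequency exponent $\epsilon = \beta$. The only mild subtlety to be careful about is to correctly match the exponent $\min\{\max\{\beta-1,0\},1\}$ in the statement to the three branches (which saturates at $1$ precisely when $\beta \geq 2$, matching the transition from Corollary \ref{coy, bd, exact conditions} to Corollary \ref{coy, bd, exact conditions, high growth}).
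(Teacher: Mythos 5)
Your proposal is correct and follows the same route the paper takes: the paper simply states that the corollary is obtained ``by applying Corollary \ref{coy, bd, exact conditions} and \ref{coy, bd, exact conditions, high growth} to the prototype $\mu(\xi)=|\xi|^{\beta}$,'' and your three-case check that $|\xi|^\beta$ satisfies the pointwise derivative hypotheses (with the standing hypothesis $\beta > n(1/\dot p - 1)$ supplying an admissible $\epsilon$ near the origin, and the homogeneity bound $|\partial^\gamma |\xi|^\beta|\lesssim|\xi|^{\beta-|\gamma|}$ for large $\xi$) is exactly the verification being invoked. Your bookkeeping of $\min\{\max\{\beta-1,0\},1\}$ and $\max\{\beta-2,0\}$ across the three ranges correctly reproduces the stated threshold in each case.
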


\begin{remark}
  As one can see, in the high growth case $\b>2$, the potential loss comes from two aspects.
  The first one $n|1/p-1/q|$ can be viewed as the result of the scattered property of $\nabla\mu$,
  and the second one $n|1/p-1/2|(\b-2)$, which is vanish in the low and mild growth cases $0<\b\leq 2$, comes from the second-order derivative of $\mu$ as in
  $e^{i\mu(D)}: M^{p,q}_{\d}\rightarrow\mpq$ boundedness case.
  Thus, in the high growth case,
  the more complex composition of potential loss may lead to greater difficulty of determining the sharp loss of potential.
  For this direction, a partial result can be found in \cite{Cunanan2014JMAA}.
  \end{remark}
  \noindent\textbf{Theorem D (\cite[Theorem 1.2]{Cunanan2014JMAA}).}\  Suppose $1\leq p,q\leq \fy$.
  Let $\b\geq 2$ and $\mu$ be a real-valued $C^{\fy}(\rn\bs\{0\})$ function which is homogeneous of order $\b$.
  Suppose that there exists a point $\xi_0\neq 0$ at which the Hessian determinant of $\mu$ is not zero.
  Let $\max\{1/q,1/2\}\leq 1/p$ or $\min\{1/q,1/2\}\geq 1/p$, $s\in \mathbb{R}$.
  Then the boundedness of $e^{i\mu(D)}: W^{p,q}_{s} \rightarrow \wpq$ implies
  \be
  s\geq n|1/p-1/q|+n|1/p-1/2|(\b-2).
  \ee
 \\~

  Note that when $\b=2$, by Theorem \ref{thm, wiener, sharp potential loss}, the conclusion in Theorem D can be improved to
  \be
  s> n|1/p-1/q|+n|1/p-1/2|(\b-2)
  \ee
  when $p\neq q$ in the full range $0<p,q\leq \fy$. However, in the high growth case $\b>2$,
  it still leaves an open problem whether $\d=n|1/p-1/q|+n|1/p-1/2|(\b-2)$ is the optimal potential loss index in the
  boundedness result $e^{i\mu(D)}: W^{p,q}_{\d} \rightarrow \wpq$ when $p\neq q$.

\appendix
\section{}
In order to prove Lemma \ref{lemma, conv of Wfy}, we first recall the sharp version of Young's inequality of discrete form.
\begin{lemma}[Lemma 4.5 in \cite{GuoChenFanZhao2018MJM}]\label{lemma, sharpness of the Young's inequality, discrete form}
Suppose $0<q,q_1,q_2 \leq \infty$. Set $S:=\{j\in \mathbb{Z}: \ q_j\geq 1, 1\leq j\leq 2\}$.
Then
\be
l_{q_1}\ast l_{q_2}\subset l_{q}
\ee
holds if and only if
\be
\begin{cases}
(|S|-1)+1/q\leq 1/q_1+1/q_2,
\\
1/q\leq 1/q_1,\ 1/q\leq 1/q_2.
\end{cases}
\ee
\end{lemma}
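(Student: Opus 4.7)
The plan is to split the analysis by the cardinality $|S|\in\{0,1,2\}$ and to treat sufficiency and necessity separately within each case.

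For sufficiency, when $|S|=2$ (i.e.\ both $q_1,q_2\geq 1$), the inequality $1+1/q\leq 1/q_1+1/q_2$ together with $q\geq q_j$ is exactly the hypothesis of classical discrete Young's inequality, which applies verbatim. For the quasi-Banach cases $|S|\leq 1$ the plan is to exploit two elementary facts: (i) the sequence-space embedding $\ell^{p}\subset \ell^{r}$ whenever $p\leq r$ (which is the reverse of the Lebesgue situation), and (ii) the sub-additivity inequality $(\sum_k c_k)^{p}\leq \sum_k c_k^{p}$ for $0<p\leq 1$. When $|S|=1$, WLOG $q_1<1\leq q_2$; then $\ell^{q_1}\subset \ell^{1}$ reduces the claim to $\ell^{1}\ast \ell^{q_2}\subset \ell^{q_2}\subset \ell^{q}$, valid because $q\geq q_2$. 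When $|S|=0$, WLOG $q_1\leq q_2<1$; combine $\ell^{q_1}\subset \ell^{q_2}$ with the termwise estimate
\[
\bigl|(a\ast b)(n)\bigr|^{q_2}\leq \sum_{k}|a_k|^{q_2}|b_{n-k}|^{q_2},
\]
obtained from (ii), and sum in $n$ to get $\ell^{q_2}\ast \ell^{q_2}\subset \ell^{q_2}\subset \ell^{q}$.

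For necessity, the conditions $1/q\leq 1/q_j$ (equivalently $q\geq q_j$) come from plugging a Dirac mass: choosing $b=\delta_0$ gives $a\ast b=a$, so boundedness forces the embedding $\ell^{q_j}\hookrightarrow \ell^{q}$, hence $q\geq q_j$. The remaining condition $(|S|-1)+1/q\leq 1/q_1+1/q_2$ is only substantive when $|S|=2$: in the other cases it follows automatically from $1/q\leq 1/q_j$ together with $1/q_{3-j}>1$ whenever $q_{3-j}<1$. For $|S|=2$ I would use the standard saturating sequences $a=b=\chi_{\{1,\dots,N\}}$. A direct computation shows that $a\ast b$ is a triangular bump of height comparable to $N$ supported on an interval of length comparable to $N$, so $\|a\ast b\|_{\ell^{q}}\sim N^{1+1/q}$, whereas $\|a\|_{\ell^{q_1}}\|b\|_{\ell^{q_2}}=N^{1/q_1+1/q_2}$; sending $N\to\infty$ forces $1+1/q\leq 1/q_1+1/q_2$.

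The main obstacle is simply managing the bookkeeping across the three cases and ensuring that one does \emph{not} attempt to invoke classical Young's inequality when some $q_j<1$ (where it fails). Once the two elementary facts above are in hand, every sub-case is routine; no deep new idea is required, which is consistent with the fact that the lemma is quoted as known from \cite{GuoChenFanZhao2018MJM}.
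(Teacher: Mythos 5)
Your proof is correct. Note, though, that the paper does not prove this statement at all: it is quoted verbatim as Lemma 4.5 of the cited reference \cite{GuoChenFanZhao2018MJM}, so there is no internal proof to compare against. Your argument is a clean, self-contained verification: case-splitting on $|S|$, reducing the quasi-Banach cases to the $\ell^p\subset\ell^r$ embedding for $p\leq r$ and the sub-additivity $(\sum c_k)^p\leq\sum c_k^p$ for $0<p\leq 1$, and obtaining necessity via Dirac masses (for $1/q\leq 1/q_j$) and truncated indicators (for the balance condition when $|S|=2$) is the standard route, and the observation that the balance condition is automatic when $|S|\leq 1$ closes the loop. One cosmetic remark: when invoking classical Young's inequality in the $|S|=2$ sufficiency case, it is worth noting explicitly that the hypothesis $1+1/q\leq 1/q_1+1/q_2$ forces $1/q_1+1/q_2\geq 1$, so the conjugate exponent $r$ with $1/r=1/q_1+1/q_2-1$ is in $[1,\infty]$ and Young does apply before using $\ell^r\subset\ell^q$; otherwise the Young hypothesis would not be met (e.g.\ $q_1=q_2=\infty$), though in that degenerate range the stated condition is vacuous anyway.
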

By this lemma, we further have following useful inequality.
\begin{lemma}\label{lemma, conv under 1}
  Let $0<p,q\leq \fy$. We have
  \be
  l^{\frac{q}{\dot{p}}}\ast l^{\frac{\dot{p}\wedge \dot{q}}{\dot{p}}}\subset l^{\frac{q}{\dot{p}}}
  \ee
\end{lemma}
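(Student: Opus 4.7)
The plan is to apply the sharp discrete Young's inequality (Lemma~\ref{lemma, sharpness of the Young's inequality, discrete form}) directly, with the choice
\[
q_1 = \frac{q}{\dot{p}}, \qquad q_2 = \frac{\dot{p}\wedge\dot{q}}{\dot{p}}, \qquad q_0 = \frac{q}{\dot{p}} \quad (\text{target}).
\]
I first verify the two monotonicity conditions. $1/q_0 \leq 1/q_1$ is an equality, hence trivial. For $1/q_0 \leq 1/q_2$, I observe $\dot{p}\wedge\dot{q}\leq\dot{q}\leq q$ (the last inequality holds because $\dot{q}=q$ when $q<1$ and $\dot{q}=1\leq q$ when $q\geq 1$), so $q_2\leq q_0$, giving $1/q_0\leq 1/q_2$.

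The remaining task is to check the summation condition $(|S|-1)+1/q_0 \leq 1/q_1+1/q_2$, which (since $q_0=q_1$) reduces to $|S|\leq 1+1/q_2$. For this I will split into cases based on the values of $|S|$. Note $q_2 = (\dot{p}\wedge\dot{q})/\dot{p}\leq 1$ always, with $q_2=1$ iff $\dot{q}\geq \dot{p}$; and $q_1\geq 1$ iff $q\geq\dot{p}$. The three possibilities are:
\begin{itemize}
\item[(i)] $q\geq\dot{p}$ and $\dot{q}\geq\dot{p}$: then $|S|=2$ and $q_2=1$, so the condition reads $2\leq 2$.
\item[(ii)] $q\geq\dot{p}$ and $\dot{q}<\dot{p}$: then $|S|=1$ and $q_2<1$, so $1\leq 1+1/q_2$ holds since $1/q_2>1$.
\item[(iii)] $q<\dot{p}$: this forces $q<1$, hence $\dot{q}=q<\dot{p}$, so $|S|=0$, and the condition $0\leq 1+1/q_2$ is immediate.
\end{itemize}

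All hypotheses of Lemma~\ref{lemma, sharpness of the Young's inequality, discrete form} are therefore met, and $l^{q_1}\ast l^{q_2}\subset l^{q_0}$ follows. I do not expect any genuine obstacle; the only point requiring care is a clean bookkeeping of the set $S$ in terms of the relative sizes of $p$, $q$, and $1$, which is handled by the above trichotomy.
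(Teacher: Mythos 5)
Your proof is correct and follows essentially the same route as the paper: both invoke the sharp discrete Young's inequality (Lemma~\ref{lemma, sharpness of the Young's inequality, discrete form}) with the same choice of exponents $q_1 = q/\dot{p}$, $q_2 = (\dot{p}\wedge\dot{q})/\dot{p}$, $q_0 = q/\dot{p}$, and then split into cases to verify the summation condition $(|S|-1)+1/q_0\leq 1/q_1+1/q_2$. Your trichotomy by the value of $|S|$ is organized a bit differently from the paper's split (which distinguishes ``$r<1$ or $r_2<1$'' from ``$r, r_2\geq 1$'') and is if anything slightly more explicit, but the substance is identical.
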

\begin{proof}
  Denote by $r_1=r:=\frac{q}{\dot{p}}$, $r_2:=\frac{\dot{p}\wedge \dot{q}}{\dot{p}}$. We have
  \ben\label{pf, A1}
  1/r\leq 1/r_1,\  1/r\leq 1/r_2.
  \een
  We divide this proof into two cases.

  Case 1: $r< 1$ or $r_2< 1$. The desired conclusion follows by Lemma \ref{lemma, sharpness of the Young's inequality, discrete form} and \eqref{pf, A1}.

  Case 2: $r, r_2\geq 1$. We only need to check
  \be
  1+1/r\leq 1/r_1+1/r_2,
  \ee
  which is equivalent to
  \be
  1\leq 1/r_2\Longleftrightarrow \frac{\dot{p}\wedge \dot{q}}{\dot{p}}\leq 1.
  \ee

\end{proof}

Then, we give the following product relation on modulation space.
\begin{lemma}\label{lemma, product on modulation}
  Let $0<p,q\leq \fy$, we have
  \be
  \mpq\cdot M^{\fy,\dot{p}\wedge \dot{q}} \subset \mpq.
  \ee
\end{lemma}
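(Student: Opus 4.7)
The plan is to transfer this product estimate on modulation spaces to the already-established convolution estimate on Wiener amalgam spaces via the Fourier transform identity $\scrF M^{p,q} = W^{q,p}$ (equivalently $\scrF^{-1}W^{q,p} = M^{p,q}$) recorded just before the statement of Lemma \ref{lemma, conv of Wfy}. The key observation is that products on the $M^{p,q}$-side correspond under $\scrF$ to convolutions on the $W^{q,p}$-side, so the desired bilinear estimate becomes an instance of Lemma \ref{lemma, conv of Wfy}(1).

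More concretely, take $f \in \mpq$ and $g \in M^{\fy,\dot{p}\wedge \dot{q}}$. Applying $\scrF$ to both factors, we have $\hat f \in W^{q,p}$ and $\hat g \in W^{\dot{p}\wedge \dot{q},\fy}$, and the convolution identity $(fg)^{\wedge} = \hat f \ast \hat g$ reduces the target inclusion $fg \in \mpq$ to the convolution inclusion $\hat f \ast \hat g \in W^{q,p}$.

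To finish, I would invoke Lemma \ref{lemma, conv of Wfy}(1) with the roles of $p$ and $q$ interchanged and with $\d = 0$. Since $\dot{q}\wedge \dot{p} = \dot{p}\wedge \dot{q}$, that lemma directly yields
\be
W^{q,p} \ast W^{\dot{p}\wedge \dot{q},\fy} \subset W^{q,p},
\ee
which gives $\hat f \ast \hat g \in W^{q,p}$, hence $fg \in \scrF^{-1}W^{q,p} = \mpq$ with the corresponding quasi-norm bound.

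The main obstacle is essentially bookkeeping: one has to be careful that the exponent $\dot{p}\wedge \dot{q}$ is symmetric in $(p,q)$ so that swapping $p$ and $q$ in Lemma \ref{lemma, conv of Wfy}(1) really produces the right auxiliary space, and that the Fourier-transform identity between modulation and Wiener amalgam spaces holds in the full quasi-Banach range $0 < p,q \leq \fy$ (which is available from Definitions \ref{Definition, modulation space, continuous form} and \ref{Definition, Wiener amalgam space, continuous form} together with the relation $|V_{\phi}f(x,\xi)| = |V_{\hat\phi}\hat f(\xi,-x)|$ recalled in the preliminaries). Once these are in place, no further analysis is required.
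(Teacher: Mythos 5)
Your reduction is itself correct as a piece of algebra — the Fourier transform sends $M^{p,q}$ to $W^{q,p}$, turns products into convolutions, and $\dot p\wedge\dot q$ is symmetric, so the product inclusion $\mpq\cdot M^{\fy,\dot p\wedge\dot q}\subset\mpq$ is indeed equivalent to the convolution inclusion $W^{q,p}\ast W^{\dot p\wedge\dot q,\fy}\subset W^{q,p}$. The problem is that you cannot invoke Lemma \ref{lemma, conv of Wfy}(1) to close the argument: in Appendix A, the paper's proof of Lemma \ref{lemma, conv of Wfy}(1) consists precisely of stripping off the weight $\langle D\rangle^{\d}$, applying the Fourier transform, and reducing the statement to $M^{q,p}\cdot M^{\fy,\dot p\wedge\dot q}\subset M^{q,p}$ — i.e., to the very lemma you are being asked to prove. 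Your proposal and the paper's proof of the convolution relation are the same reduction read in opposite directions, so taken together they form a circle and neither inclusion is actually established.

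The paper breaks the circle by proving Lemma \ref{lemma, product on modulation} from scratch: it uses the discrete definition of $\mpq$ via the operators $\Box_k$, expands $\Box_k(fg)=\sum_{|l|\le c(n)}\sum_{i+j=k+l}\Box_k(\Box_if\cdot\Box_jg)$ by almost-orthogonality of the frequency pieces, bounds each piece by $\|\Box_i f\|_{L^p}\|\Box_j g\|_{L^\fy}$, and then applies the discrete convolution inequality $l^{q/\dot p}\ast l^{(\dot p\wedge\dot q)/\dot p}\subset l^{q/\dot p}$ from Lemma \ref{lemma, conv under 1} (itself a consequence of the sharp discrete Young inequality, Lemma \ref{lemma, sharpness of the Young's inequality, discrete form}). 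To repair your proposal you would need to supply an independent proof of $W^{q,p}\ast W^{\dot p\wedge\dot q,\fy}\subset W^{q,p}$ that does not pass back through the product inequality; in effect that means redoing the same frequency-decomposition argument on the Wiener side, which offers no savings. The direct route via $\Box_k$ and discrete Young is the right one here.
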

\begin{proof}
  Using the almost
orthogonality of the frequency projections $\sigma _{k}$, for all $k\in \mathbb{Z}^{n}$ we have
\begin{equation*}
\Box_k(fg)=\sum_{i,j\in \mathbb{Z}^n}\Box_k(\Box_if\cdot \Box_jg)
=\sum_{|l|\leq c(n)}\sum_{i+j=k+l}\Box_k(\Box_if\cdot \Box_jg),
\end{equation*}
where \ $c(n)$ \ is a constant depending only on \ $n.$
By the fact that $\Box_k$ is an $L^p$ multiplier, we have
\be
\begin{split}
\|\Box_k(fg)\|_{L^p}
= & \|\sum_{|l|\leq c(n)}\sum_{i+j=k+l}\Box_k(\Box_if\cdot \Box_jg)\|_{L^p}
\\
\lesssim &
\left(\sum_{|l|\leq c(n)}\sum_{i+j=k+l}\|\Box_k(\Box_if\cdot \Box_jg)\|^{\dot{p}}_{L^p}\right)^{1/\dot{p}}
\\
\lesssim &
\left(\sum_{|l|\leq c(n)}\sum_{i+j=k+l}\|\Box_if\cdot \Box_jg\|^{\dot{p}}_{L^p}\right)^{1/\dot{p}}
\\
\leq &
\left(\sum_{|l|\leq c(n)}\sum_{i+j=k+l}\|\Box_if\|^{\dot{p}}_{L^{p}}\|\Box_jg\|^{\dot{p}}_{L^{\fy}}\right)^{1/\dot{p}}.
\end{split}
\ee
By the definition of modulation space, we further have
\ben\label{pf, A2}
\begin{split}
\|fg\|^{\dot{p}}_{\mpq}
= &
\big\|\{\|\Box_k(fg)\|_{L^p}\}\big\|^{\dot{p}}_{l^q}
\\
\lesssim &
\big\|\{\|\Box_if\|^{\dot{p}}_{L^{p_1}}\}\ast \{\|\Box_jg\|^{\dot{p}}_{L^{p_2}}\}\big\|_{l^{q/\dot{p}}}
\\
\lesssim &
\big\|\{\|\Box_if\|_{L^{p_1}}\}\big\|^{\dot{p}}_{l^{q}}
\big\|\{\|\Box_jg\|_{L^{p_2}}\}\big\|_{l^{\dot{p}\wedge \dot{q}}}^{\dot{p}},
\end{split}
\een
where in the last inequality we use the convolution relation
$
l^{\frac{q}{\dot{p}}}\ast l^{\frac{\dot{p}\wedge \dot{q}}{\dot{p}}}\subset l^{\frac{q}{\dot{p}}}.
$
in Lemma \ref{lemma, conv under 1}.
The final conclusion follows by \eqref{pf, A2}.
\end{proof}

Now, we are in a position to give the proof of Lemma \ref{lemma, conv of Wfy}.
\begin{proof}[Proof of Lemma \ref{lemma, conv of Wfy}]
The second conclusion $W^{\fy,\fy}_{s}\cdot W^{\fy,\fy}_{s}\subset W^{\fy,\fy}_{s}$
can be found in \cite[Corollary 4.2]{GuoChenFanZhao2018MJM}.

Following, we focus on the proof of $W^{p,q}_{\d}\ast W^{\dot{p}\wedge \dot{q},\fy}_{-\d}\subset W^{p,q}$.
For $f\in W^{p,q}_{\d}$ and $g\in W^{\dot{p}\wedge \dot{q},\fy}_{-\d}$,  denote by $F=\lan D\ran^{\d}f$
and $G=\lan D\ran^{-\d}g$. Then
\be
\|f\|_{W^{p,q}_{\d}}\sim \|F\|_{\wpq},\ \ \
\|g\|_{W^{\dot{p}\wedge \dot{q},\fy}_{-\d}}\sim \|G\|_{W^{\dot{p}\wedge \dot{q},\fy}},
\ \ \ f\ast g=F\ast G.
\ee
From this, we only need to verify the following equivalent relation
\be
W^{p,q}\ast W^{\dot{p}\wedge \dot{q},\fy}\subset W^{p,q},
\ee
which is equivalent to the following product inequality on modulation space
\be
M^{q,p}\cdot M^{\fy, \dot{p}\wedge \dot{q}}\subset M^{q,p}.
\ee
This is just the conclusion in Lemma \ref{lemma, product on modulation}.
\end{proof}

\subsection*{Acknowledgements}
Guo's work was partially supported by the National Natural Science Foundation of China (Nos. 11701112, 11671414),
and
Zhao's work was partially supported by the National Natural Science Foundation of China (Nos. 11601456, 11771388).

\end{document}